\def\vbar{\mathchoice{\vrule height6.3ptdepth-.5ptwidth.8pt\kern- .8pt}
{\vrule height6.3ptdepth-.5ptwidth.8pt\kern-.8pt} {\vrule
height4.1ptdepth-.35ptwidth.6pt\kern-.6pt} {\vrule
height3.1ptdepth-.25ptwidth.5pt\kern-.5pt}}
\def\fudge{\mathchoice{}{}{\mkern.5mu}{\mkern.8mu}}
\def\bbc#1#2{{\rm \mkern#2mu\vbar\mkern-#2mu#1}}
\def\bbb#1{{\rm I\mkern-3.5mu #1}}
\def\bba#1#2{{\rm #1\mkern-#2mu\fudge #1}}
\def\bb#1{{\count4=`#1 \advance\count4by-64 \ifcase\count4\or\bba
A{11.5}\or \bbb B\or\bbc C{5}\or\bbb D\or\bbb E\or\bbb F \or\bbc
G{5}\or\bbb H\or \bbb I\or\bbc J{3}\or\bbb K\or\bbb L \or\bbb
M\or\bbb N\or\bbc O{5} \or \bbb P\or\bbc C{5}\or\bbb B\or\bbc
S{4.2}\or\bba T{10.5}\or\bbc U{5}\or \bba V{12}\or\bba
W{16.5}\or\bba X{11}\or\bba Y{11.7}\or\bba Z{7.5}\fi}}
\newtheorem{thm}{Theorem}[section]
\newtheorem{cor}[thm]{Corollary}
\newtheorem{lem}[thm]{Lemma}
\newtheorem{proposition}[thm]{Proposition}
\newtheorem{example}[thm]{Example}
\theoremstyle{definition}
\newtheorem{definition}[thm]{Definition}
\theoremstyle{remark}
\newtheorem{remark}[thm]{Remark}
\numberwithin{equation}{section}
\begin{document}
\date{}
\title{On Hom-Lie superbialgebras}
\author{Mohamed Fadous, Sami Mabrouk, Abdenacer Makhlouf }
 \maketitle{}


 \begin{abstract}
The purpose  of this paper is to generalize  to $\mathbb{Z}_2$-graded case the study  of Hom-Lie bialgebras which were
discussed  first by D. Yau, then by C. Bai and Y. Sheng. We provide different ways for constructing Hom-Lie superbialgebras. Also we define
Matched pairs, Manin supertriples and discuss their relationships. Moreover, we  study coboundary  and
triangular Hom-Lie bialgebras, as well as infinitesimal deformations of the cobracket.
\end{abstract}
\begin{small}
\textbf{ Keywords :}{Hom-Lie superalgebra, coboundary   Hom-Lie bialgebra, 
triangular Hom-Lie bialgebra}\\
\textbf{2010 Mathematics Subject Classification : } 17B62, 17B37.
\end{small}
\section*{Introduction}

Lie bialgebras were introduced by V. Drinfel'd  in  \cite{Drinfel'd V.G.2, Drinfel'd V.G.4}, they are infinitesimal versions of compatible Poisson structures on Lie groups and maybe viewed as the Lie-theoretic case of a bialgebra. He raised various problems related to quantum groups and quantization.  The study of quasi-triangular quantum groups involves the solutions of the quantum
Yang-Baxter equations. In the classical limit, the solutions of the classical Yang-Baxter equations
provide examples of Lie bialgebras. Since then a huge research activity was dedicated to these kind of algebraic structures.

The aim of this paper is to define and study Hom-Lie superbialgebras which are Hom-type generalization of Lie superbialgebras. Hom-Lie superbialgebras are Hom-Lie superalgebras provided with a cobracket and a compatibility condition. Motivated by examples of $q$-deformations of algebras of vector fields,   J. Hartwig, D. Larsson, and S. Silvestrov introduced the notion of  Hom-Lie algebra  in \cite{HartwigLarssonSilvestrov}, as a generalization of  Lie algebras where the Jacobi condition is twisted by a Homomorphism. The graded case of Hom-Lie algebras were studied first by F. Ammar and the last author  in \cite{Ammar F Makhlouf A 1}, while Hom-Lie bialgebras  were
discussed  by D. Yau, then by C. Bai and Y. Sheng. Recently L. Cai and Y. Sheng presented a slightly different approach of Hom-Lie bialgebras called purely Hom-Lie bialgebras in \cite{Sheng2}.

The paper is organized as follows, in the first section we provide the relevant definitions and some properties about Hom-Lie superbialgebras. Moreover, we give some key constructions and a classification  of 3-dimensional Hom-Lie superbialgebras with 2-dimensional even part. In Section 2, We define  Matched pairs and Manin supertriples, then we establish their relationships with  Hom-Lie superbialgebras. We construct a Hom-Lie superalgebra structure on the direct sum of two Hom-Lie superalgebras $(\mathfrak{g},[\cdot,\cdot],\phi)$ and $(\mathfrak{g}',[\cdot,\cdot]',\phi')$, such that $\mathfrak{g}$ is a $\mathfrak{g}'$-module and  $\mathfrak{g}'$ is a $\mathfrak{g}$-module, also we construct a  Hom-Lie superbialgebra structure on the direct sum $\mathfrak{g}\oplus\mathfrak{g}^*$ where  $\mathfrak{g}^* $ is the dual superspace of $\mathfrak{g}$. Section 3 is dedicated to coboundary Hom-Lie superbialgebras and quasi-triangular Hom-Lie superbialgebras. We show how a coboundary or quasi-triangular Hom-Lie superbialgebra can be constructed from a Hom-Lie superalgebra and an $r$-matrix. In the last section, we study perturbation of cobrackets in Hom-Lie superbialgebras, following Drinfel'd's perturbation theory of quasi-Hopf algebras. We describe Hom-Lie superbialgebras obtained by infinitesimal deformations of the cobracket.

\section{\bf Basics and Classification of Hom-Lie superbialgebras}
In this section, we introduce and study Hom-Lie superbialgebras, which are  Hom-type  version of Lie superbialgebras, see \cite{Drinfel'd V.G.2,Drinfel'd V.G.4}. We extend to graded case  the definition of Hom-Lie bialgebra introduced  in \cite{Yau2}. We show that the dual of a finite dimensional
Hom-Lie superbialgebra is also a Hom-Lie superbialgebra (Theorem \ref{amineeeee}), generalizing the
self-dual property of Hom-Lie bialgebras.

First, let us start by fixing some definitions and notations.
 Let $\mathfrak{L}=\mathfrak{L}_{\bar{0}}\oplus\mathfrak{L}_{\bar{1}}$ be a  $\mathbb{Z}_2$-graded vector space over an arbitrary field $\mathbb{K}$ of characteristic 0. In the sequel, we will consider only element which are  $\mathbb{Z}_{2}$-homogeneous. For $x\in\mathfrak{L}$, we denote by ${|x|}\in\mathbb{Z}_{2}$  its parity, i.e., $x\in \mathfrak{L}_{|x|}$.
We denote by $\tau$ the super-twist map of $\mathfrak{L}\otimes\mathfrak{L}$ namely,
$
 \tau(x\otimes y)=(-1)^{{|x|}{|y|}}y\otimes x
$
for $x,y\in \mathfrak{L}.$
The super-cyclic map $\xi$ permutes the coordinates of $\mathfrak{L}\otimes\mathfrak{L}\otimes\mathfrak{L}$, it  is defined as 
\begin{equation*}
\xi=(\mathbf{1}\otimes\tau)\cdot(\tau\otimes\mathbf{1}) : x\otimes y\otimes z\mapsto (-1)^{{|x|}({|y|}+{|z|})}y\otimes z\otimes x,
\end{equation*}
for $x, y, z\in\mathfrak{L}$, where $\mathbf{1}$ is the identity map on $\mathfrak{L} $. We denote by $\mathfrak{L^\ast}=$Hom$(\mathfrak{L}, \mathbb{K})$ the linear dual of $\mathfrak{L}$. For $\phi\in\mathfrak{L^\ast}$ and $x\in\mathfrak{L}$, we often use the adjoint notation $\langle\phi, x\rangle$ for $\phi(x)\in\mathbb{K}$.\\
For a linear map $\Delta: \mathfrak{L}\rightarrow \mathfrak{L}\otimes\mathfrak{L}$ (comultiplication), we use Sweedler's notation $\Delta(x)=\sum_{(x)} x_{1}\otimes x_{2}$ for $x\in\mathfrak{L}$.
We will often omit the summation sign $\sum_{(x)}$ to make it simple.
The parity $|r|$ of $r\in\mathfrak{L}^{\otimes2}$ is defined as follows : since we assume $r$  homogenous, there exists $|r|\in\mathbb{Z}_{2}$, such that $r$ can be written as $r=\sum r_{1}\otimes r_{2}\in\mathfrak{L}^{\otimes2}$, $r_{1}, r_{2}$ are homogenous elements with $|r|=|r_{1}|+|r_{2}|$.

\begin{definition}(\cite{Ammar F Makhlouf A 1}). A \emph{Hom-Lie superalgebra } is a triple $(\mathfrak{L}, [\cdot ,\cdot ], \alpha)$ consisting of a superspace $\mathfrak{L}$, an even bilinear map $[\cdot ,\cdot ]:\mathfrak{L} \times \mathfrak{L} \rightarrow \mathfrak{L}$ and an even superspace homomorphism $\alpha:\mathfrak{L} \rightarrow \mathfrak{L}$ satisfying
\begin{equation}\label{701}
[x,y]=-(-1)^{{|x|}{|y|}}[y,x],
\end{equation}
\begin{equation}\label{702}
(-1)^{{|x|}{|z|}}[\alpha(x),[y,z]]+(-1)^{{|z|}{|y|}}[\alpha(z),[x,y]]+(-1)^{{|y|}{|x|}}[\alpha(y),[z,x]]=0
\end{equation}
for all homogeneous elements $x,y,z$ in $\mathfrak{L}$.\\
It is multiplicative if, in addition $\alpha\circ[\cdot ,\cdot ]=[\cdot ,\cdot ]\circ\alpha^{\otimes2}$, (i.e., $\alpha([x, y])=[\alpha(x), \alpha(y)]$,  $\forall x,y \in \mathfrak{L}$).

\end{definition}
 \begin{definition}\label{00001} (\cite{Hengyun Y and Yucai S}, \cite{MakhloufSilvestrov2}). A \emph{Hom-Lie supercoalgebra} is a triple $(\mathfrak{L}, \Delta, \alpha)$ consisting of a superspace $\mathfrak{L}$,  an even superspace homomorphism $\alpha:\mathfrak{L} \rightarrow \mathfrak{L}$ and a linear map  $\Delta:\mathfrak{L} \rightarrow \mathfrak{L}\otimes\mathfrak{L}$ (the cobracket) such that
\begin{equation}\label{2007}
\Delta(\mathfrak{L}^i) \subset \sum_{i=j+k} \mathfrak{L}^j\otimes\mathfrak{L}^k \ \  for \ \  i\in\mathbb{Z}_{2},
\end{equation}
\begin{equation}
 Im\Delta\subset Im(1\otimes1-\tau)  \ \  i.e. \ \ \Delta \ is \ skew-supersymmetric,
\end{equation}
\begin{equation}\label{jacobi}
(1\otimes1\otimes1+\xi+\xi^2)\circ(\alpha\otimes\Delta)\circ\Delta=0 : \mathfrak{L}\rightarrow \mathfrak{L}\otimes\mathfrak{L}\otimes\mathfrak{L}.
\end{equation}
If, in addition, $\Delta\circ\alpha=\alpha^{\otimes2}\circ\Delta$, then $\mathfrak{L}$ is called co-multiplicative.\\
A Hom-Lie supercoalgebra with $\alpha=Id$ is exactly a Lie supercoalgebra  \cite{Hengyun Y and Yucai S}.
\end{definition}
\begin{remark} Let $Im\Delta\subset Im(1\otimes1-\tau)\subset ker(1\otimes1+\tau)$, then $(1\otimes1+\tau)\circ\Delta=0$ \ \ i.e.\ \ $\Delta$ is skew-supersymmetric (\cite{Walter}).
\end{remark}
\begin{definition}
(1) For an element $x$ in a Hom-Lie superalgebra $(\mathfrak{L}, [\cdot ,\cdot ], \alpha)$ and $n\geq 2$, define the adjoint map  $ad_{x}:\mathfrak{L}^{\otimes n}\rightarrow \mathfrak{L}^{\otimes n}$ by
\begin{equation}\label{action}
ad_{x}(y_{1}\otimes\cdot\cdot\cdot\otimes y_{n})=\sum_{i=1}^{n} (-1)^{{|x|}({|y_{1}|+{|y_{2}|}}+\cdot\cdot\cdot+{|y_{i-1}|})}\alpha(y_{1})\otimes\cdot\cdot\cdot\otimes\alpha(y_{i-1})\otimes[x, y_{i}]\otimes\alpha(y_{i+1})\cdot\cdot\cdot\otimes\alpha(y_{n}).
\end{equation}
For  $n=2$, $ad_{x}(y_{1}\otimes y_{2})=[x, y_{1}]\otimes\alpha(y_{2})+(-1)^{{|x|}{|y_{1}|}}\alpha(y_{1})\otimes[x, y_{2}]$.\\
Conversely, given $\gamma=y_{1}\otimes\cdot\cdot\cdot\otimes y_{n}$, we define the map $ad(\gamma):\mathfrak{L}\rightarrow \mathfrak{L}^{\otimes n}$ by $ad(\gamma)(x)=ad_{x}(\gamma)$, for $x\in\mathfrak{L}$.\\
(2) For an element $x$ in a Hom-Lie superalgebra $(\mathfrak{L}, [\cdot ,\cdot ], \alpha)$ and $n\geq 2$, define the adjoint map \\ $ad_{\alpha(x)}:\mathfrak{L}^{\otimes n}\rightarrow \mathfrak{L}^{\otimes n}$ by
\begin{equation}
ad_{\alpha(x)}(y_{1}\otimes\cdot\cdot\cdot\otimes y_{n})=\sum_{i=1}^{n} (-1)^{{|x|}({|y_{1}|+{|y_{2}|}}+\cdot\cdot\cdot+{|y_{i-1}|})}\alpha(y_{1})\otimes\cdot\cdot\cdot\otimes\alpha(y_{i-1})\otimes[\alpha(x), y_{i}]\otimes\alpha(y_{i+1})\cdot\cdot\cdot\otimes\alpha(y_{n}).
\end{equation}
\end{definition}
\begin{definition}\label{ae}A \emph{(multiplicative) Hom-Lie superbialgebra } is a quadruple $(\mathfrak{L}, [\cdot ,\cdot ], \Delta, \alpha)$ such that
\begin{enumerate}
\item $(\mathfrak{L}, [\cdot ,\cdot ], \alpha)$ is a (multiplicative) Hom-Lie superalgebra.
\item $(\mathfrak{L}, \Delta, \alpha)$ is a (co-multiplicative) Hom-Lie supercoalgebra.
\item The following compatibility condition holds for all $x, y \in\mathfrak{L}$ :
\begin{equation}\label{a}
\Delta([x,y])=ad_{\alpha(x)}(\Delta(y))-(-1)^{{|x|}{|y|}}ad_{\alpha(y)}(\Delta(x)).
\end{equation}
\end{enumerate}
\end{definition}
\begin{definition}
 The map $f:\mathfrak{L} \rightarrow \mathfrak{L'}$ is called  even (resp. odd) map if $f(\mathfrak{L}_{i})\subset \mathfrak{L'}_{i}$ (resp. $f(\mathfrak{L}_{i})\subset \mathfrak{L'}_{i+1}),$ for $i=0, 1$.
A morphism of Hom-Lie superbialgebras is an even  linear map such that \\
$$\alpha\circ f=f\circ\alpha,\ \ \ \   f\circ[\cdot ,\cdot ]=[\cdot ,\cdot ]\circ f^{\otimes2}\ \ \ \      \text{and }\ \ \  \Delta\circ f=f^{\otimes2}\circ\Delta.$$\\
An isomorphism of Hom-Lie superbialgebras is an invertible morphism of Hom-Lie superbialgebras. Two Hom-Lie superbialgebras are said to be isomorphic if there exists an isomorphism between them.
\end{definition}
\begin{remark}
A Hom-Lie superbialgebra with $\alpha=Id$ is exactly a Lie superbialgebra, as defined in  \cite{Hengyun Y and Yucai S,Drinfel'd V.G.2,Drinfel'd V.G.4}. 
\end{remark}
\begin{remark}\label{2000}The compatibility condition (\ref{a}) is, in fact, a cocycle condition in Hom-Lie superalgebra cohomology  \cite{MakhloufSilvestrov3}, just as it is the case in a Lie superbialgebra with Lie superalgebra cohomology  \cite{Drinfel'd V.G.4}. Indeed, we can regard $\mathfrak{L}^{\otimes2}$ as an $\mathfrak{L}$-module via the $\alpha$-twisted adjoint action (\ref{a}):
\begin{align}\label{2001}
x\cdot(y_{1}\otimes y_{2})&=ad_{\alpha(x)}(y_{1}\otimes y_{2})\\
&=[\alpha(x), y_{1}]\otimes\alpha(y_{2})+(-1)^{{|x|}{|y_{1}|}}\alpha(y_{1})\otimes[\alpha(x), y_{2}],\nonumber\end{align}
for $x\in\mathfrak{L}$ and $y_{1}\otimes y_{2}\in\mathfrak{L}^{\otimes2}$.\\
 Then we can think of the cobracket $\Delta:\mathfrak{L} \rightarrow \mathfrak{L}\otimes\mathfrak{L}$ as a 1-cochain $\Delta\in C^1(\mathfrak{L},\mathfrak{L}^{\otimes2})$. Here $ C^1(\mathfrak{L},\mathfrak{L}^{\otimes2})$ is defined as the linear super-subspace of Hom$(\mathfrak{L},\mathfrak{L}^{\otimes2})$ consisting of maps that commute with $\alpha$. Generalizing \cite{MakhloufSilvestrov3} to include coefficients in $\mathfrak{L}^{\otimes2}$, the differential on $\Delta$ is given by
\begin{equation}\label{fff}
(\delta^{1}_{HL}\Delta)(x, y)=\Delta([x,y])-x\cdot\Delta(y)+(-1)^{{|x|}{|y|}}y\cdot\Delta(x)
=\Delta([x,y])-ad_{\alpha(x)}(\Delta(y))+(-1)^{{|x|}{|y|}}ad_{\alpha(y)}(\Delta(x)).
\end{equation}
Therefore, (\ref{a}) says exactly that $\Delta\in C^1(\mathfrak{L},\mathfrak{L}^{\otimes2})$ is a 1-cocycle.
\end{remark}

\begin{example} \emph{Classification of 2-dimensional Hom-Lie superbialgebras  with 1-dimensional even part}. \\
Let $\mathfrak{L}=\mathfrak{L}_{\bar{0}}\oplus\mathfrak{L}_{\bar{1}}$ be a 2-dimensional superspace where $\mathfrak{L}_{\bar{0}}$ is generated by $e_{1}$ and $\mathfrak{L}_{\bar{1}}$ is generated by $e_{2}$. The triple $(\mathfrak{L}, [\cdot ,\cdot ], \alpha)$ is a Hom-Lie superalgebra when  $[e_{1}, e_{1}]=0$, $[e_{1}, e_{2}]=b e_{2}$ and $[e_{2}, e_{2}]=c e_{1}$ with $\alpha(e_{1})=a_{1}e_{1}$, $\alpha(e_{2})=a_{2}e_{2}$ and  $a_{2}bc=0$, where $b,c, a_{1}, a_{2}$ are parameters in $\mathbb{K}$.\\
The triple $(\mathfrak{L}, \Delta, \alpha)$ is a Hom-Lie supercoalgebra if  $\Delta(e_{1})=0$ and $\Delta(e_{2})=d(e_{1}\otimes e_{2}-e_{2}\otimes e_{1})$,
where $d\in\mathbb{K}$.\\
The triple $(\mathfrak{L}, [\cdot ,\cdot ], \Delta, \alpha)$ is a Hom-Lie superbialgebras if $a_{1}=1$ or $a_{1}=-1$ and $a_{2}bd=0$.
\end{example}

\begin{remark}
Recently Cai and Sheng introduced a different notion of Hom-Lie bialgebras called \emph{purely Hom-Lie bialgebra}, see \cite{Sheng2}, which we can extend to the super case as follows.  Let $(\mathfrak{L}, [\cdot ,\cdot ], \alpha)$, where $\alpha$ is invertible, and $(\mathfrak{L}^*, [\cdot ,\cdot ], (\alpha^{-1})^*)$ be two Hom-Lie superalgebras. The pair $(\mathfrak{L},\mathfrak{L}^*) $ is a purely Hom-Lie superbialgebra if holds  the compatibility condition
\begin{equation}
\Delta([x,y])=ad_{\alpha^{-1}(x)}(\Delta(y))-(-1)^{{|x|}{|y|}}ad_{\alpha^{-1}(y)}(\Delta(x)).
\end{equation}
Notice that this condition is different from condition \eqref{a}. Purely Hom-Lie bialgebras in a graded case will be studied in a forthcoming paper.
\end{remark}

 \textbf{Classification of 3-dimensional Hom-Lie superbialgebras with 2-dimensional even part.} \\
Let $\mathfrak{L}=\mathfrak{L}_{\bar{0}}\oplus\mathfrak{L}_{\bar{1}}$ be a 3-dimensional superspace where $\mathfrak{L}_{\bar{0}}$ is generated by $e_{1}$, $e_{2}$ and $\mathfrak{L}_{\bar{1}}$ is generated by $e_{3}$. 
We aim to construct Hom-Lie superbialgebras . We set for  the linear map $\alpha$ 
$$\alpha(e_{1})=a_{1}e_{1}+a_{2}e_{2},\ \ \ \ \alpha(e_{2})=a_{3}e_{1}+a_{4}e_{2}, \ \ \ \ \ \alpha(e_{3})=a_{5}e_{3},$$
where $a_{1}, a_{2}, a_{3}, a_{4}, a_{5}$ are parameters in $\mathbb{K}.$\\
The structure of the bracket $[\cdot ,\cdot ]$ is of the form 
$$[e_{3},e_{3}]=b_{1}e_{1}+b_{2}e_{2},\ \ \ \ [e_{1},e_{2}]=b_{3}e_{1}+b_{4}e_{2},\ \ \ \ [e_{1},e_{3}]=b_{5}e_{3},\ \ \ \ [e_{2},e_{3}]=b_{6}e_{3},$$
where $b_{1}, b_{2}, b_{3}, b_{4}, b_{5}, b_{6}$ are parameters in $\mathbb{K}.$\\
The structure of the cobracket $\Delta$ is of the form 
\begin{eqnarray*}&\Delta(e_{1})=c_{1}(e_{1}\otimes e_{2}-e_{2}\otimes e_{1})+c_5 e_{3}\otimes e_{3} ,\ \ \ \Delta(e_{2})=c_{2}(e_{1}\otimes e_{2}-e_{2}\otimes e_{1})+c_6 e_{3}\otimes e_{3},\\  & \Delta(e_{3})=c_{3}(e_{1}\otimes e_{3}-e_{3}\otimes e_{1})+c_{4}(e_{2}\otimes e_{3}-e_{3}\otimes e_{2}),
\end{eqnarray*}
where $c_{1}, c_{2}, c_{3}, c_{4}$ are parameters in $\mathbb{K}.$\\

In the sequel, we will consider the linear map under the Jordan forms with respect to a suitable basis. We split the calculations in two cases. First, we deal  with diagonal case and then with Jordan case.\\

\textbf{1) Diagonal case }: We consider the  linear map $\alpha$, where with respect to a suitable basis the matrix is of the form:   
$\alpha=\left(\begin{array}{lll}
a_{1} \ \ 0 \ \ \ 0\\
0 \ \ \ a_{4} \ \ 0\\
0 \ \ \ 0 \ \ \ a_{5}\\
\end{array}\right)$, which corresponds to $a_{2}=0$, $a_{3}=0$, and  the eigenvalues are pairwise non-equal. \\
We obtained, when the eigenvalues are nonzero, the following corresponding  (multiplicative) Hom-Lie superbialgebras :\\

\begin{small}

\begin{tabular}{|l|l|l|}
  \hline
  \ \ \ \ \ \ \ \ \ \ Linear map &\ \ \ \ \ \ \ \ bracket &\ \ \ \ \ \ \ \ cobracket \\ \hline
   $\alpha(e_{1})=e_{1}, \ \ \alpha(e_{2})=a_{4}e_{2},\ \ \alpha(e_{3})=-e_{3}$ & $[e_{3}, e_{3}]=0, [e_{1}, e_{3}]=b_{5}e_{3} \ or \ (b_{5}=0),$& $\Delta(e_{1})=c_{5}e_{3}\otimes e_{3},$ \\
    & &$\Delta(e_{2})=0,$ \\
    &$[e_{1}, e_{2}]=b_{4}e_{2}, [e_{2}, e_{3}]=0$& $\Delta(e_{3})=0$ \\ \hline
  $\alpha(e_{1})=e_{1}, \ \ \alpha(e_{2})=a_{4}e_{2}, \ \ \alpha(e_{3})=a_{5}e_{3}$  & \ \ \  All bracket are zero&  $\Delta(e_{1})=0,$ \\
   &  &$\Delta(e_{2})=c_{2}(e_{1}\otimes e_{2}-e_{2}\otimes e_{1}),$ \\
    &&  $\Delta(e_{3})=c_{3}(e_{1}\otimes e_{3}-e_{3}\otimes e_{1})$ \\ \hline
      $\alpha(e_{1})=e_{1}, \ \ \alpha(e_{2})=a_{4}e_{2}, \ \alpha(e_{3})=-e_{3} $ & $[e_{3}, e_{3}]=b_{1}e_{1},  $& $\Delta(e_{1})=0,$  \\
    &$$ &$\Delta(e_{2})=c_{2}(e_{1}\otimes e_{2}-e_{2}\otimes e_{1}),$ \\
    &$[e_{1}, e_{2}]=[e_{1}, e_{3}]=[e_{2}, e_{3}]=0$& $\Delta(e_{3})=c_{3}(e_{1}\otimes e_{3}-e_{3}\otimes e_{1})$ \\
    \hline
      $\alpha(e_{1})=a_{1}e_{1}, \ \ \alpha(e_{2})=e_{2}, \ \alpha(e_{3})=-e_{3} $ & $[e_{3}, e_{3}]=[e_{1}, e_{3}]=0, [e_{1}, e_{2}]=b_{3}e_{1}$& $\Delta(e_{1})=0,$ \\
    & &$\Delta(e_{2})=c_{6}e_{3}\otimes e_{3},$ \\
    &$[e_{2}, e_{3}]=b_{6}e_{3} \ or \ (b_{6}=0 \ and \ b_{3}=0)$& $\Delta(e_{3})=0$ \\
   \hline
  $\alpha(e_{1})=a_{1}e_{1}, \ \ \alpha(e_{2})=e_{2}, \ \alpha(e_{3})=-e_{3} $ & $[e_{1}, e_{2}]=[e_{1}, e_{3}]=[e_{2}, e_{3}]=0,$& $\Delta(e_{1})=c_{1}(e_{1}\otimes e_{2}-e_{2}\otimes e_{1}),$ \\
    & &$\Delta(e_{2})=0,$ \\
    &$[e_{3}, e_{3}]=b_{2}e_{2} \ or \ (b_{2}=0)$& $\Delta(e_{3})=c_{4}(e_{2}\otimes e_{3}-e_{3}\otimes e_{2})$ \\
 \hline
$\alpha(e_{1})=e_{1}, \ \alpha(e_{2})=a_{4}e_{2}, \ \alpha(e_{3})=a_{5}e_{3} $ & $[e_{3}, e_{3}]=[e_{1}, e_{3}]=[e_{2}, e_{3}]=0, $& $\Delta(e_{1})=0,$ \\
    &$$&$\Delta(e_{2})=c_{2}(e_{1}\otimes e_{2}-e_{2}\otimes e_{1}), $ \\
    &$[e_{1}, e_{2}]=b_{4}e_{2}$&  $\Delta(e_{3})=0$  \\
     \hline
  $\alpha(e_{1})=e_{1},\ \alpha(e_{2})=a_{4}e_{2},\ \alpha(e_{3})=a_{5}e_{3}$ &  $[e_{3}, e_{3}]=[e_{2}, e_{3}]=0$ & All cobracket are zero \\
   $$
    & $[e_{1}, e_{2}]=b_{4}e_{2}, [e_{1}, e_{3}]=b_{5}e_{3}$ & \\
    \hline
   $\alpha(e_{1})=a_{1}e_{1}, \ \ \alpha(e_{2})=a_{4}e_{2}, \ \ \alpha(e_{3})=a_{5}e_{3}$  & \ \ \  All bracket are zero&  $\Delta(e_{1})=c_{5}e_{3}\otimes e_{3},$ \\
   $a_{5}=\pm\sqrt{a_{1}} \ or \ (\alpha(e_{1})=e_{1} \ and \ \alpha(e_{3})=-e_{3})$&  &$\Delta(e_{2})=0,$ \\
    &&  $\Delta(e_{3})=0$ \\
     \hline
  $\alpha(e_{1})=a_{1}e_{1},\ \alpha(e_{2})=e_{2},\ \alpha(e_{3})=a_{5}e_{3}$ &  $[e_{3}, e_{3}]=[e_{1}, e_{3}]=0$ & All cobracket are zero \\
   $$
    & $[e_{1}, e_{2}]=b_{3}e_{1}, [e_{2}, e_{3}]=b_{6}e_{3}$ & \\
    \hline
    $\alpha(e_{1})=e_{1}, \ \ \alpha(e_{2})=a_{4}e_{2}, \ \ \alpha(e_{3})=a_{5}e_{3}$ & $[e_{3}, e_{3}]=b_{2}e_{2},\ [e_{1}, e_{2}]=b_{4}e_{2},$& $\Delta(e_{1})=0, \Delta(e_{2})=c_{2}(e_{1}\otimes e_{2}-$ \\
 $(a_{5}=\pm \sqrt{a_{4}})$
    &$(\ b_{5}=\frac{a_{5}b_{4}}{2a_{4}}, c_{3}=\frac{a_{5}c_{2}}{2a_{4}}, c_{6}=-\frac{b_{4}c_{2}}{b_{2}})$ &$e_{2}\otimes e_{1})+c_{6}e_{3}\otimes e_{3}, $ \\
    &$[e_{2}, e_{3}]=0, \ [e_{1}, e_{3}]=b_{5}e_{3},$& $\Delta(e_{3})=c_{3}(e_{1}\otimes e_{3}-e_{3}\otimes e_{1})$ \\
    \hline
    $\alpha(e_{1})=a_{1}e_{1}, \ \ \alpha(e_{2})=e_{2}, \ \ \alpha(e_{3})=a_{5}e_{3}$ & $[e_{3}, e_{3}]=b_{1}e_{1},\ [e_{1}, e_{2}]=b_{3}e_{1},$& $\Delta(e_{1})=c_{1}(e_{1}\otimes e_{2}-e_{2}\otimes e_{1})$ \\
 $(a_{5}=\pm \sqrt{a_{1}})$
    &$(\ b_{6}=-\frac{a_{5}b_{3}}{2a_{1}}, c_{4}=-\frac{a_{5}c_{1}}{2a_{1}}, c_{5}=-\frac{b_{3}c_{1}}{b_{1}})$ &$+c_{5}e_{3}\otimes e_{3}, \Delta(e_{2})=0, $ \\
    &$[e_{1}, e_{3}]=0, \ [e_{2}, e_{3}]=b_{6}e_{3},$& $\Delta(e_{3})=c_{4}(e_{2}\otimes e_{3}-e_{3}\otimes e_{2})$ \\
    \hline
    $\alpha(e_{1})=a_{1}e_{1}, \ \ \alpha(e_{2})=e_{2}, \ \ \alpha(e_{3})=a_{5}e_{3}$ & $[e_{3}, e_{3}]=0,\ [e_{1}, e_{2}]=b_{3}e_{1},$& $\Delta(e_{1})=c_{5}e_{3}\otimes e_{3},$ \\
 $(a_{5}=\pm \sqrt{a_{1}})$
    &$(\ b_{6}=-\frac{a_{5}b_{3}}{2a_{1}})$ &$\Delta(e_{2})=0, $ \\
    &$[e_{1}, e_{3}]=0, \ [e_{2}, e_{3}]=b_{6}e_{3},$& $\Delta(e_{3})=0$ \\
    \hline
  $\alpha(e_{1})=a_{1}e_{1}, \ \ \alpha(e_{2})=e_{2}, \ \ \alpha(e_{3})=a_{5}e_{3}$  & \ \ \  All bracket are zero  &   $\Delta(e_{1})=c_{1}(e_{1}\otimes e_{2}-e_{2}\otimes e_{1})$ \\
   $(a_{5}=\pm\sqrt{a_{1}}, c_{4}=-\frac{a_{5}c_{1}}{2a_{1}})$&  &$+c_{5}e_{3}\otimes e_{3}, \Delta(e_{2})=0,$ \\
    &&  $\Delta(e_{3})=c_{4}(e_{2}\otimes e_{3}-e_{3}\otimes e_{2})$ \\
    \hline
         \end{tabular}\\

\end{small}

\begin{small}
\begin{tabular}{|l|l|l|}
    \hline
   $\alpha(e_{1})=e_{1}, \ \ \alpha(e_{2})=a_{4}e_{2}, \ \ \alpha(e_{3})=a_{5}e_{3}$  & \ \ \  All bracket are zero  &   $\Delta(e_{1})=0, \Delta(e_{2})=c_{2}(e_{1}\otimes e_{2}$ \\
   $(a_{5}=\pm\sqrt{a_{4}}, c_{3}=\frac{a_{5}c_{2}}{2a_{4}})$&  &$-e_{2}\otimes e_{1})+c_{6}e_{3}\otimes e_{3},$ \\
    &&  $\Delta(e_{3})=c_{3}(e_{1}\otimes e_{3}-e_{3}\otimes e_{1})$ \\
    \hline
    $\alpha(e_{1})=e_{1}, \ \ \alpha(e_{2})=a_{4}e_{2}, \ \ \alpha(e_{3})=a_{5}e_{3}$  & \ \ \  All bracket are zero  &   $\Delta(e_{1})=0, $ \\
   $(a_{5}=\pm\sqrt{a_{4}})$&  &$\Delta(e_{2})=c_{2}(e_{1}\otimes e_{2}-e_{2}\otimes e_{1}),$ \\
    &&  $\Delta(e_{3})=c_{3}(e_{1}\otimes e_{3}-e_{3}\otimes e_{1})$ \\
     \hline
    $\alpha(e_{1})=e_{1}, \ \alpha(e_{2})=a_{4}e_{2}, \ \alpha(e_{3})=a_{5}e_{3} $ & $[e_{1}, e_{2}]=b_{4}e_{2},$& $\Delta(e_{1})=0,$ \\
    $(a_{5}=\pm\sqrt{a_{4}})$&&$\Delta(e_{2})=c_{2}(e_{1}\otimes e_{2}-e_{2}\otimes e_{1}), $ \\
    &$[e_{3}, e_{3}]=[e_{1}, e_{3}]=[e_{2}, e_{3}]=0$&  $\Delta(e_{3})=0$ \\
  \hline
    $\alpha(e_{1})=e_{1}, \ \ \alpha(e_{2})=a_{4}e_{2}, \ \ \alpha(e_{3})=a_{5}e_{3}$ & $[e_{1}, e_{2}]=b_{4}e_{2},\ [e_{1}, e_{3}]=b_{5}e_{3},$& $\Delta(e_{1})=0,$ \\
 $(a_{5}=\pm \sqrt{a_{4}})$
    &$(\ b_{5}=\frac{a_{5}b_{4}}{2a_{4}})$ &$ \Delta(e_{2})=c_{6}e_{3}\otimes e_{3}, $ \\
    &$[e_{2}, e_{3}]=[e_{3}, e_{3}]=0,$& $\Delta(e_{3})=0$ \\
 \hline
 $\alpha(e_{1})=a_{1}e_{1},\ \alpha(e_{2})=a_{4}e_{2},\  \alpha(e_{3})=a_{5}e_{3}$ &\ \ \  All bracket are zero &  All cobracket are zero \\
     &  &  \\
   \hline
   $\alpha(e_{1})=a_{1}e_{1}, \ \alpha(e_{2})=e_{2}, \ \alpha(e_{3})=a_{5}e_{3} $ & $[e_{1}, e_{2}]=b_{3}e_{1},$& $\Delta(e_{1})=c_{1}(e_{1}\otimes e_{2}-e_{2}\otimes e_{1}),$ \\
    $(a_{5}=\pm\sqrt{a_{1}})$&&$\Delta(e_{2})=0, $ \\
    &$[e_{1}, e_{3}]=[e_{2}, e_{3}]=[e_{3}, e_{3}]=0$&  $\Delta(e_{3})=0$ \\
  \hline
  $\alpha(e_{1})=a_{1}e_{1},\ \alpha(e_{2})=a_{4}e_{2},\ \alpha(e_{3})=a_{5}e_{3}$ &  $[e_{3}, e_{3}]=b_{1}e_{1},$ & All cobracket are zero \\
   $(a_{5}=\pm\sqrt{a_{1}})$
    & $[e_{1}, e_{2}]=[e_{1}, e_{3}]=[e_{2}, e_{3}]=0$ & \\
 \hline
 $\alpha(e_{1})=a_{1}e_{1},\ \alpha(e_{2})=a_{4}e_{2},\ \alpha(e_{3})=a_{5}e_{3}$ &  $[e_{3}, e_{3}]=b_{2}e_{2},$ & All cobracket are zero \\
   $(a_{5}=\pm\sqrt{a_{4}})$
    & $[e_{1}, e_{2}]=[e_{1}, e_{3}]=[e_{2}, e_{3}]=0$ & \\
  \hline
    $\alpha(e_{1})=e_{1}, \ \ \alpha(e_{2})=a_{4}e_{2}, \ \ \alpha(e_{3})=-e_{3}$  & \ \ \  All bracket are zero  &   $\Delta(e_{1})=0, $ \\
   $$&  &$\Delta(e_{2})=0,$ \\
    &&  $\Delta(e_{3})=c_{3}(e_{1}\otimes e_{3}-e_{3}\otimes e_{1})$\\ \hline
  $\alpha(e_{1})=a_{1}e_{1}, \ \ \alpha(e_{2})=e_{2}, \ \ \alpha(e_{3})=-e_{3}$  & \ \ \  All bracket are zero  &   $\Delta(e_{1})=0, $ \\
   $$&  &$\Delta(e_{2})=0,$ \\
    &&  $\Delta(e_{3})=c_{4}(e_{2}\otimes e_{3}-e_{3}\otimes e_{2})$\\ \hline
   $\alpha(e_{1})=e_{1}, \ \ \alpha(e_{2})=a_{4}e_{2},\ \ \alpha(e_{3})=-e_{3}$ & $[e_{1}, e_{2}]=b_{4}e_{2}, $& $\Delta(e_{1})=0,$ \\
    & &$\Delta(e_{2})=c_{2}(e_{1}\otimes e_{2}-e_{2}\otimes e_{1}),$ \\
    &$[e_{1}, e_{3}]=[e_{2}, e_{3}]=[e_{3}, e_{3}]=0$& $\Delta(e_{3})=0$ \\ \hline
\end{tabular}\\
\end{small}

\textbf{2) Jordan case} : Now, we consider the linear map $\alpha$  where the corresponding matrix is  of the form $\alpha=\left(\begin{array}{lll}
a_{1} \ \ 1 \ \ \ 0\\
0 \ \ \ a_{1} \ \ 0\\
0 \ \ \ 0 \ \ \ a_{5}\\
\end{array}\right)$, that is  $a_{2}=1$, $a_{3}=0$, $a_{4}=a_{1}$.\\
We obtained the following corresponding  (multiplicative) Hom-Lie superbialgebras :\\

\begin{small}
\begin{tabular}{|l|l|l|}
  \hline
  \ \ \ \ \ \ \ \ \ \ Linear map &\ \ \ \ \ \ \ \ bracket &\ \ \ \ \ \ \ \ cobracket \\ \hline
   $\alpha(e_{1})=e_{2}, \ \ \alpha(e_{2})=\alpha(e_{3})=0$ & $[e_{3}, e_{3}]=b_{2}e_{2},\ [e_{1}, e_{2}]=b_{4}e_{2},$& $\Delta(e_{1})=c_{1}(e_{1}\otimes e_{2}-e_{2}\otimes e_{1})$ \\
    & &$+c_{5} e_{3}\otimes e_{3}, \Delta(e_{2})=0,$ \\
    &$[e_{1}, e_{3}]=[e_{2}, e_{3}]=0$& $\Delta(e_{3})=c_{4}(e_{2}\otimes e_{3}-e_{3}\otimes e_{2})$ \\
    \hline
  $\alpha(e_{1})=e_{2}, \ \ \alpha(e_{2})=\alpha(e_{3})=0$  & $[e_{3}, e_{3}]=b_{2}e_{2},\ [e_{1}, e_{2}]=b_{4}e_{2}, $& $\Delta(e_{1})=c_{1}(e_{1}\otimes e_{2}-e_{2}\otimes e_{1})$ \\
    & &$+c_{5} e_{3}\otimes e_{3}, \Delta(e_{2})=0,$ \\
    &$[e_{1}, e_{3}]=b_{5}e_{3}, [e_{2}, e_{3}]=0, \ or \ (b_{5}=0)$& $\Delta(e_{3})=0$ \\
    \hline
  $\alpha(e_{1})=e_{1}+e_{2}, \ \alpha(e_{2})=e_{2}, \ \ \alpha(e_{3})=0$  & $[e_{3}, e_{3}]=[e_{2}, e_{3}]=0,$& $\Delta(e_{1})=c_{1}(e_{1}\otimes e_{2}-e_{2}\otimes e_{1}),$ \\
    & &$\Delta(e_{2})=0,$ \\
    &$[e_{1}, e_{2}]=b_{4}e_{2}, [e_{1}, e_{3}]=b_{5}e_{3} \ or \ (b_{5}=0)$& $\Delta(e_{3})=c_{4}(e_{2}\otimes e_{3}-e_{3}\otimes e_{2})$ \\
     \hline
      $\alpha(e_{1})=e_{2},\ \alpha(e_{2})=0,\ \alpha(e_{3})=a_{5}e_{3},$ & $[e_{3}, e_{3}]= [e_{1}, e_{3}]=[e_{2}, e_{3}]=0,$ &  $\Delta(e_{1})=c_{1}(e_{1}\otimes e_{2}-e_{2}\otimes e_{1}),$ \\
    $(a_{5}\neq0)$ & $[e_{1}, e_{2}]=b_{4}e_{2}$ & $\Delta(e_{2})=\Delta(e_{3})=0$ \\
  \hline
  $\alpha(e_{1})=e_{1}+e_{2}, \ \ \alpha(e_{2})=e_{2}, \ \alpha(e_{3})=a_{5}e_{3} $ &\ \ \  All bracket are zero& $\Delta(e_{1})=c_{1}(e_{1}\otimes e_{2}-e_{2}\otimes e_{1}),$ \\
    $$& &$\Delta(e_{2})=0,$ \\
    && $\Delta(e_{3})=c_{4}(e_{2}\otimes e_{3}-e_{3}\otimes e_{2})$ \\
 \hline
$\alpha(e_{1})=e_{1}+e_{2}, \ \alpha(e_{2})=e_{2}, \ \alpha(e_{3})=a_{5}e_{3} $ & $[e_{1}, e_{2}]=b_{4}e_{2},\ [e_{1}, e_{3}]=b_{5}e_{3},$& $\Delta(e_{1})=c_{1}(e_{1}\otimes e_{2}-e_{2}\otimes e_{1}),$ \\ $$
    &$$&$\Delta(e_{2})=0, \ for\ c_{4}=\frac{b_{5}c_{1}}{b_{4}}$ \\
    &$[e_{3}, e_{3}]=[e_{2}, e_{3}]=0$&  $\Delta(e_{3})=c_{4}(e_{2}\otimes e_{3}-e_{3}\otimes e_{2})$  \\
    \hline
    $\alpha(e_{1})=e_{1}+e_{2}, \ \ \alpha(e_{2})=e_{2}, \ \alpha(e_{3})=a_{5}e_{3} $ &\ \ \  All bracket are zero& $\Delta(e_{1})=c_{1}(e_{1}\otimes e_{2}-e_{2}\otimes e_{1})$ \\
    $(a_{5}=\pm1, c_{4}=-\frac{a_{5}c_{1}}{2})$& &$+c_{5}e_{3}\otimes e_{3}, \Delta(e_{2})=0,$ \\
    && $\Delta(e_{3})=c_{4}(e_{2}\otimes e_{3}-e_{3}\otimes e_{2})$ \\
   \hline
          \end{tabular}
   \end{small}

\begin{small}
\begin{tabular}{|l|l|l|}
\hline
   $\alpha(e_{1})=e_{1}+e_{2}, \ \ \alpha(e_{2})=e_{2}, \ \ \alpha(e_{3})=a_{5}e_{3}$ & $[e_{3}, e_{3}]=[e_{2}, e_{3}]=0,$& $\Delta(e_{1})=c_{1}(e_{1}\otimes e_{2}-e_{2}\otimes e_{1})$ \\
   $(a_{5}=\pm1, c_{4}=-\frac{a_{5}c_{1}}{2})$
    &$(\ b_{5}=-\frac{a_{5}b_{4}}{2})$ &$+c_{5}e_{3}\otimes e_{3}, \Delta(e_{2})=0, $ \\
    &$[e_{1}, e_{2}]=b_{4}e_{2}, [e_{1}, e_{3}]=b_{5}e_{3},$& $\Delta(e_{3})=c_{4}(e_{2}\otimes e_{3}-e_{3}\otimes e_{2})$ \\
     \hline
   $\alpha(e_{1})=e_{1}+e_{2}, \ \ \alpha(e_{2})=e_{2}, \ \ \alpha(e_{3})=a_{5}e_{3}$ & $[e_{1}, e_{2}]=[e_{1}, e_{3}]=[e_{2}, e_{3}]=0,$& $\Delta(e_{1})=c_{1}(e_{1}\otimes e_{2}-e_{2}\otimes e_{1}),$ \\
   $(a_{5}=\pm1)$
    &$$ &$\Delta(e_{2})=0, $ \\
    &$[e_{3}, e_{3}]=b_{2}e_{2}$& $\Delta(e_{3})=c_{4}(e_{2}\otimes e_{3}-e_{3}\otimes e_{2})$ \\
   \hline
 $\alpha(e_{1})=-e_{1}+e_{2},\ \alpha(e_{2})=-e_{2},\ \alpha(e_{3})=0,$ & $[e_{3}, e_{3}]=[e_{1}, e_{2}]=[e_{2}, e_{3}]=0,$ &  $\Delta(e_{1})=\Delta(e_{2})=0,$ \\
    $$ & $[e_{1}, e_{3}]=b_{5}e_{3}, \ or \ (b_{5}=0)$ & $\Delta(e_{3})=c_{4}(e_{2}\otimes e_{3}-e_{3}\otimes e_{2})$ \\
     \hline
    $\alpha(e_{1})=a_{1}e_{1}+e_{2},\ \alpha(e_{2})=a_{4}e_{2},\  \alpha(e_{3})=a_{5}e_{3}$ &\ \ \  All bracket are zero &  All cobracket are zero \\
    $(a_{1}\neq0, \ a_{5}\neq0)$ &  &  \\
   \hline
   $\alpha(e_{1})=e_{1}+e_{2}, \ \ \alpha(e_{2})=e_{2}, \ \ \alpha(e_{3})=a_{5}e_{3}$ & $[e_{3}, e_{3}]=[e_{2}, e_{3}]=0,$& $\Delta(e_{1})=c_{5}e_{3}\otimes e_{3},$ \\
   $(a_{5}=\pm1)$
    &$$ &$$ \\
    &$[e_{1}, e_{3}]=b_{5}e_{3}, [e_{1}, e_{2}]=b_{4}e_{2}, \ or \ (b_{4}=0)$& $\Delta(e_{2})=\Delta(e_{3})=0$ \\
  \hline
  $\alpha(e_{1})=e_{1}+e_{2}, \ \ \alpha(e_{2})=e_{2}, \ \ \alpha(e_{3})=a_{5}e_{3}$ & $[e_{3}, e_{3}]=b_{2}e_{2}, [e_{1}, e_{2}]=b_{4}e_{2},$& $\Delta(e_{1})=c_{1}(e_{1}\otimes e_{2}-e_{2}\otimes e_{1})$ \\
   $(a_{5}=\pm1)$
    &$(b_{5}=\frac{a_{5}b_{4}}{2}, c_{5}=\frac{-b_{4}c_{1}+2a_{5}b_{4}c_{4}}{2b_{2}}, c_{4}=\pm \frac{c_{1}}{2})$ &$+c_{5}e_{3}\otimes e_{3}, \Delta(e_{2})=0, $ \\
    &$[e_{1}, e_{3}]=b_{5}e_{3}, [e_{2}, e_{3}]=0,$& $\Delta(e_{3})=c_{4}(e_{2}\otimes e_{3}-e_{3}\otimes e_{2})$ \\
   \hline
  $\alpha(e_{1})=a_{1}e_{1}+e_{2}, \ \ \alpha(e_{2})=a_{4}e_{2}, \ \alpha(e_{3})=0 $ &\ \ \  All bracket are zero& $\Delta(e_{1})= \Delta(e_{2})=0,$ \\
    $(a_{1}=a_{4})$& &$\Delta(e_{3})=c_{4}(e_{2}\otimes e_{3}-e_{3}\otimes e_{2})$ \\
    && $$ \\
 \hline
 $\alpha(e_{1})=e_{1}+e_{2},\ \alpha(e_{2})=e_{2},\ \alpha(e_{3})=a_{5}e_{3}$ &  $[e_{1}, e_{3}]=b_{5}e_{3}, \ $ & $\Delta(e_{1})=\Delta(e_{2})=0,$ \\
   $(a_{5}=\pm1)$
    & $[e_{3}, e_{3}]=[e_{1}, e_{2}]=[e_{2}, e_{3}]=0$ & $\Delta(e_{3})=c_{4}(e_{2}\otimes e_{3}-e_{3}\otimes e_{2})$ \\
 \hline
 $\alpha(e_{1})=a_{1}e_{1}+e_{2},\ \alpha(e_{2})=a_{4}e_{2},\ \alpha(e_{3})=a_{5}e_{3}$ &  $[e_{3}, e_{3}]=b_{2}e_{2},$ & All cobracket are zero \\
   $(a_{5}=\pm\sqrt{a_{1}}, a_{1}=a_{4})$
    & $[e_{1}, e_{2}]=[e_{1}, e_{3}]=[e_{2}, e_{3}]=0$ & \\
 \hline
 $\alpha(e_{1})=a_{1}e_{1}+e_{2},\ \alpha(e_{2})=a_{4}e_{2},\ \alpha(e_{3})=0$ &  $[e_{1}, e_{3}]=b_{5}e_{3},$ & All cobracket are zero \\
   $(a_{1}=a_{4})$
    & $[e_{1}, e_{2}]=[e_{3}, e_{3}]=[e_{2}, e_{3}]=0$ & \\
 \hline
\end{tabular}\\
\end{small}

\

The following result shows that a Hom-Lie superbialgebra deforms into another Hom-Lie superbialgebra along any endomorphism.

\begin{thm}\label{aaa} Let $(\mathfrak{L}, [\cdot ,\cdot ], \Delta, \alpha)$ be a Hom-Lie superbialgebra and an even map $\beta:\mathfrak{L}\rightarrow\mathfrak{L}$ be a Hom-Lie superbialgebra morphism. Then
$$\mathfrak{L}_{\beta}=(\mathfrak{L},  [\cdot ,\cdot ]_{\beta}=\beta\circ[\cdot ,\cdot ], \Delta_{\beta}=\Delta\circ\beta,\beta\alpha)$$
is also a Hom-Lie superbialgebra, which is multiplicative if $\mathfrak{L}$ is.
\end{thm}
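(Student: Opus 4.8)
The plan is to verify, one at a time, the three defining conditions of Definition~\ref{ae} for $\mathfrak{L}_\beta$, in each case reducing to the corresponding condition for $\mathfrak{L}$ by pushing $\beta$ through the structure maps. The only properties of $\beta$ I will use are that it is \emph{even} (hence commutes with $\tau$ and $\xi$ and preserves parities) and that it is a Hom-Lie superbialgebra morphism, i.e. $\beta\alpha=\alpha\beta$, $\beta\circ[\cdot,\cdot]=[\cdot,\cdot]\circ\beta^{\otimes2}$ and $\Delta\circ\beta=\beta^{\otimes2}\circ\Delta$; in particular $[\beta u,\beta v]=\beta[u,v]$ for all $u,v$. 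The recurring bookkeeping fact is that after twisting the structure maps each carry one $\beta$ while their arguments are hit by another, so the power of $\beta$ that naturally surfaces is $\beta^{2}$.

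First I would check that $(\mathfrak{L},[\cdot,\cdot]_\beta,\beta\alpha)$ is a Hom-Lie superalgebra — the super analogue of Yau's twisting principle. Supersymmetry of $[\cdot,\cdot]_\beta$ is immediate from linearity and evenness of $\beta$. For the Hom-super-Jacobi identity~\eqref{702} one computes $[\beta\alpha(x),[y,z]_\beta]_\beta=\beta[\beta\alpha(x),\beta[y,z]]=\beta^2[\alpha(x),[y,z]]$, so the twisted Jacobi sum is $\beta^2$ applied to the original one and hence vanishes. When $\alpha$ is multiplicative, multiplicativity of $\beta\alpha$ for $[\cdot,\cdot]_\beta$ follows the same way: both sides reduce to $\beta^2[\alpha(x),\alpha(y)]$. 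Dually, for $(\mathfrak{L},\Delta_\beta,\beta\alpha)$: the grading condition~\eqref{2007} and skew-supersymmetry hold because $\mathrm{Im}\,\Delta_\beta\subseteq\mathrm{Im}\,\Delta$ and $\beta$ is even; and using $\Delta\circ\beta=\beta^{\otimes2}\circ\Delta$ repeatedly one obtains $(\beta\alpha\otimes\Delta_\beta)\circ\Delta_\beta=(\beta^2)^{\otimes3}\circ(\alpha\otimes\Delta)\circ\Delta$, whereupon, since $(\beta^2)^{\otimes3}$ commutes with $1\otimes1\otimes1+\xi+\xi^2$ (as $\beta$ is even), the original co-Jacobi identity~\eqref{jacobi} gives $0$. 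Co-multiplicativity of $\beta\alpha$ for $\Delta_\beta$, when $\alpha$ is co-multiplicative, is the analogous computation, both sides equalling $(\beta^2\alpha)^{\otimes2}\circ\Delta$.

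The substantive part is the compatibility condition~\eqref{a} for $\mathfrak{L}_\beta$. One must first fix the $\alpha$-twisted adjoint action $ad^\beta$ built from the \emph{new} data $([\cdot,\cdot]_\beta,\beta\alpha)$ as in~\eqref{action}, and then establish the intertwining identity
$$ad^\beta_{\beta\alpha(x)}\bigl(\Delta_\beta(y)\bigr)=(\beta^2)^{\otimes2}\Bigl(ad_{\alpha(x)}\bigl(\Delta(y)\bigr)\Bigr),$$
which one verifies on $\Delta_\beta(y)=\beta(y_1)\otimes\beta(y_2)$ term by term, using $[\beta\alpha(x),\beta(y_i)]_\beta=\beta^2[\alpha(x),y_i]$, $(\beta\alpha)(\beta(y_i))=\beta^2\alpha(y_i)$, and $|\beta\alpha(x)|=|x|$, $|\beta(y_i)|=|y_i|$ so that all signs match. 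Granting this (and the same with $x,y$ interchanged), and noting $\Delta_\beta([x,y]_\beta)=\Delta(\beta^2[x,y])=(\beta^2)^{\otimes2}\Delta([x,y])$, one simply applies $(\beta^2)^{\otimes2}$ to~\eqref{a} for $\mathfrak{L}$ to obtain~\eqref{a} for $\mathfrak{L}_\beta$. Assembling the three parts — together with their multiplicative refinements — completes the proof.

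I expect no genuine obstacle here: the argument is the Hom-Lie superbialgebra version of the twisting principle, and the only care required is the sign bookkeeping in the last intertwining identity and keeping track that, after twisting, it is $\beta^{2}$ (not $\beta$) that appears.
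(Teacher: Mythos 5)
Your proposal is correct and follows essentially the same route as the paper: reduce each axiom for $\mathfrak{L}_{\beta}$ to the corresponding axiom for $\mathfrak{L}$ by extracting a factor of $\beta^{2}$ (respectively $(\beta^{2})^{\otimes 2}$, $(\beta^{2})^{\otimes 3}$), with the compatibility condition handled by the same term-by-term intertwining computation, using evenness of $\beta$ to match the signs. No substantive difference from the paper's argument.
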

\begin{proof} It is immediate that $[\cdot ,\cdot ]_{\beta}$ is skew-supersymmetric (\ref{701}) because
$$ [x,y]_{\beta}=\beta([x,y])=\beta(-(-1)^{{|x|}{|y|}}[y,x])=-(-1)^{{|x|}{|y|}}\beta([y,x])=-(-1)^{{|x|}{|y|}}[y,x]_{\beta}.$$
The Hom super-Jacobi identity holds in $\mathfrak{L}_{\beta}$ because
$$(-1)^{{|x|}{|z|}} [\beta\alpha(x),[y,z]_{\beta}]_{\beta} =(-1)^{{|x|}{|z|}}\beta^{2}[\alpha(x),[y,z]]=-(-1)^{{|z|}{|y|}}[\beta\alpha(z),[x,y]_{\beta}]_{\beta}-(-1)^{{|y|}{|x|}}[\beta\alpha(y),[z,x]_{\beta}]_{\beta},$$ i.e.,
$$(-1)^{{|x|}{|z|}} [\beta\alpha(x),[y,z]_{\beta}]_{\beta}+(-1)^{{|z|}{|y|}}[\beta\alpha(z),[x,y]_{\beta}]_{\beta}+
(-1)^{{|y|}{|x|}}[\beta\alpha(y),[z,x]_{\beta}]_{\beta}=0.$$
$\Delta_{\beta}$ is skew-supersymmetric because
$$\tau\circ\Delta_{\beta}=\tau\circ\Delta\circ\beta=-\Delta\circ\beta=-\Delta_{\beta}.$$
Likewise, the Hom-super-co-jacobi identity holds in $\mathfrak{L}_{\beta}$ because
$$(1\otimes1\otimes1+\xi+\xi^{2})\circ(\beta\alpha\otimes\Delta_{\beta})\circ\Delta_{\beta}=(\beta^{\otimes{3}})^{2}(1\otimes1\otimes1+\xi+\xi^{2})\circ(\alpha\otimes\Delta)\circ\Delta=0.$$
To check the compatibility condition (\ref{a}) in $\mathfrak{L}_{\beta}$, we compute as follows :
\begin{align*}
\Delta_{\beta}([x,y]_{\beta})&=
(\beta^{\otimes{2}})^{2}\Delta([x,y])\\
&=(\beta^{\otimes{2}})^{2}([\alpha(x), y_{1}]\otimes\alpha(y_{2}))+(\beta^{\otimes{2}})^{2}((-1)^{{|x|}{|y_{1}|}}\alpha(y_{1})\otimes[\alpha(x), y_{2}])\\
&-(\beta^{\otimes{2}})^{2}((-1)^{{|x|}{|y|}}[\alpha(y), x_{1}]\otimes\alpha(x_{2}))-(\beta^{\otimes{2}})^{2}((-1)^{{|x|}{|y|}}(-1)^{{|y|}{|x_{1}|}}\alpha(x_{1})\otimes[\alpha(y), x_{2}])\\
&=[\beta\alpha(x),\beta(y_{1})]_{\beta}\otimes\beta\alpha(\beta(y_{2}))+(-1)^{{|x|}{|y_{1}|}}\beta\alpha(\beta(y_{1}))\otimes[\beta\alpha(x),\beta(y_{2})]_{\beta}\\
&-(-1)^{{|x|}{|y|}}[\beta\alpha(y),\beta(x_{1})]_{\beta}\otimes\beta\alpha(\beta(x_{2}))-(-1)^{{|x|}{|y|}}(-1)^{{|y|}{|x_{1}|}}\beta\alpha(\beta(x_{1}))\otimes[\beta\alpha(y),\beta(x_{2})]_{\beta}\\
&=[\beta\alpha(x),\beta(y_{1})]_{\beta}\otimes\beta\alpha(\beta(y_{2}))+(-1)^{{|x|}{|\beta(y_{1})|}}\beta\alpha(\beta(y_{1}))\otimes[\beta\alpha(x),\beta(y_{2})]_{\beta}\\
&-(-1)^{{|x|}{|y|}}[\beta\alpha(y),\beta(x_{1})]_{\beta}\otimes\beta\alpha(\beta(x_{2}))-(-1)^{{|x|}{|y|}}(-1)^{{|y|}{|\beta(x_{1})|}}\beta\alpha(\beta(x_{1}))\otimes[\beta\alpha(y),\beta(x_{2})]_{\beta}\\
&=ad_{\beta\alpha(x)}(\Delta_{\beta}(y))-(-1)^{{|x|}{|y|}}ad_{\beta\alpha(y)}(\Delta_{\beta}(x)).
\end{align*}
Because $|\beta(x_{1})|=|x_{1}|$, and $|\beta(y_{1})|=|y_{1}|$ (i.e., an even map $\beta$).
We have shown that $\mathfrak{L}_{\beta}$ is a Hom-Lie superbialgebra. The super-multiplicativity assertion is obvious.
\end{proof}
Now we discuss two special cases of Theorem \ref{aaa}. The next result says that one can obtain multiplicative Hom-Lie superbialgebras from Lie superbialgebras and their endomorphisms. A construction result of this form for Hom-type algebras was first given  in \cite{Yau1}.
\begin{cor}\label{aaaa} Let $(\mathfrak{L}, [\cdot ,\cdot ], \Delta)$ be a Lie superbialgebra and an even map $\beta:\mathfrak{L}\rightarrow\mathfrak{L}$ be a Lie superbialgebra morphism. Then
$$ \mathfrak{L}_{\beta}=(\mathfrak{L},  [\cdot ,\cdot ]_{\beta}=\beta\circ[\cdot ,\cdot ], \Delta_{\beta}=\Delta\circ\beta,\beta)$$
is a multiplicative Hom-Lie superbialgebra.
\end{cor}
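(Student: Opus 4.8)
The plan is to obtain this corollary as the special case $\alpha=Id$ of Theorem \ref{aaa}, so that no new computation is needed. First I would invoke the remark following Definition \ref{ae}: a Lie superbialgebra $(\mathfrak{L},[\cdot,\cdot],\Delta)$ is exactly a Hom-Lie superbialgebra $(\mathfrak{L},[\cdot,\cdot],\Delta,Id)$. Concretely, (\ref{701}) and (\ref{702}) with $\alpha=Id$ are the graded skew-symmetry and the super-Jacobi identity; (\ref{2007})--(\ref{jacobi}) with $\alpha=Id$ are the grading, skew-cosymmetry and super-co-Jacobi conditions; and (\ref{a}) with $\alpha=Id$ is the usual $1$-cocycle compatibility for a Lie superbialgebra. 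Moreover this Hom-Lie superbialgebra is automatically multiplicative and co-multiplicative, since $Id\circ[\cdot,\cdot]=[\cdot,\cdot]\circ Id^{\otimes2}$ and $\Delta\circ Id=Id^{\otimes2}\circ\Delta$ hold trivially.

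Next I would check that the hypothesis of the corollary matches the hypothesis of Theorem \ref{aaa} under this identification. The theorem asks for $\beta$ to be a Hom-Lie superbialgebra morphism of $(\mathfrak{L},[\cdot,\cdot],\Delta,Id)$ into itself, i.e. $\alpha\circ\beta=\beta\circ\alpha$, $\beta\circ[\cdot,\cdot]=[\cdot,\cdot]\circ\beta^{\otimes2}$ and $\Delta\circ\beta=\beta^{\otimes2}\circ\Delta$. With $\alpha=Id$ the first condition is vacuous, and the remaining two are precisely the statements that $\beta$ respects the bracket and the cobracket, which is exactly the meaning of ``$\beta$ is a Lie superbialgebra morphism'' in the corollary. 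So the hypotheses are literally the same.

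With these identifications in place I would simply apply Theorem \ref{aaa}: the twisted quadruple $\mathfrak{L}_{\beta}=(\mathfrak{L},\,[\cdot,\cdot]_{\beta}=\beta\circ[\cdot,\cdot],\,\Delta_{\beta}=\Delta\circ\beta,\,\beta\circ Id)$ is a Hom-Lie superbialgebra, and since $\beta\circ Id=\beta$ this is exactly the quadruple in the statement. Finally, because the source $(\mathfrak{L},[\cdot,\cdot],\Delta,Id)$ is multiplicative, the ``multiplicative if $\mathfrak{L}$ is'' clause of Theorem \ref{aaa} yields that $\mathfrak{L}_{\beta}$ is multiplicative, which completes the proof. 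I do not expect any genuine obstacle: all the analytic content (skew-symmetry of $\beta\circ[\cdot,\cdot]$, the Hom-super-Jacobi and co-Jacobi identities for the twisted structures, and the twisted compatibility condition) was already carried out in the proof of Theorem \ref{aaa}; the only point requiring any care is to note that multiplicativity of the source Lie superbialgebra is automatic, so the ``multiplicative'' conclusion comes for free.
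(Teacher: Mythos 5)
Your proposal is correct and follows exactly the paper's own route: the paper proves Corollary \ref{aaaa} by observing it is the $\alpha=Id$ special case of Theorem \ref{aaa}. Your additional checks that a Lie superbialgebra is a Hom-Lie superbialgebra with $\alpha=Id$ (automatically multiplicative) and that the morphism hypotheses coincide are exactly the implicit content of that one-line reduction.
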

\begin{proof} This is the $\alpha=Id$ special case of Theorem \ref{aaa}.
\end{proof}
The next result says that every multiplicative Hom-Lie superbialgebra gives rise to an infinite sequence of multiplicative Hom-Lie superbialgebras.
\begin{cor}\label{www} Let $(\mathfrak{L}, [\cdot ,\cdot ], \Delta, \alpha)$ be a multiplicative Hom-Lie superbialgebra. Then
$$ \mathfrak{L}_{\alpha^{n}}=(\mathfrak{L},  [\cdot ,\cdot ]_{\alpha^{n}}=\alpha^{n}\circ[\cdot ,\cdot ], \Delta_{\alpha^{n}}=\Delta\circ\alpha^{n},\alpha^{n+1})$$
is also a multiplicative Hom-Lie superbialgebra for each integer $n\geq0$.
\end{cor}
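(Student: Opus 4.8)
The plan is to deduce this directly from Theorem \ref{aaa} by taking $\beta = \alpha^n$. The only point that needs checking is that $\alpha^n \colon \mathfrak{L} \to \mathfrak{L}$ is a Hom-Lie superbialgebra morphism of $(\mathfrak{L}, [\cdot ,\cdot ], \Delta, \alpha)$; once this is established, Theorem \ref{aaa} immediately yields that the twisted object $\mathfrak{L}_{\alpha^n} = (\mathfrak{L},\ \alpha^n\circ[\cdot ,\cdot ],\ \Delta\circ\alpha^n,\ \alpha^n\circ\alpha)$ is a Hom-Lie superbialgebra, and $\alpha^n\circ\alpha = \alpha^{n+1}$ is precisely the claimed twisting map. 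Multiplicativity of $\mathfrak{L}_{\alpha^n}$ then follows from the last clause of Theorem \ref{aaa}, since $\mathfrak{L}$ is assumed multiplicative.

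First I would note that $\alpha^n$ is an even linear map, being an $n$-fold composite of the even map $\alpha$ (with $\alpha^0 = \mathrm{Id}$, which is trivially even and trivially a morphism, so the case $n=0$ just recovers $\mathfrak{L}$ itself). Next, $\alpha\circ\alpha^n = \alpha^{n+1} = \alpha^n\circ\alpha$, so $\alpha^n$ commutes with $\alpha$. Finally, because $\mathfrak{L}$ is a \emph{multiplicative} Hom-Lie superbialgebra we have $\alpha\circ[\cdot ,\cdot ] = [\cdot ,\cdot ]\circ\alpha^{\otimes 2}$ and $\Delta\circ\alpha = \alpha^{\otimes 2}\circ\Delta$; iterating these $n$ times gives $\alpha^n\circ[\cdot ,\cdot ] = [\cdot ,\cdot ]\circ(\alpha^n)^{\otimes 2}$ and $\Delta\circ\alpha^n = (\alpha^n)^{\otimes 2}\circ\Delta$. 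Together with the commutation with $\alpha$, these are exactly the three defining identities of a morphism of Hom-Lie superbialgebras, so $\alpha^n$ qualifies and Theorem \ref{aaa} applies with $\beta = \alpha^n$.

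There is essentially no obstacle here: the entire content is carried by Theorem \ref{aaa}, and the role of the multiplicativity hypothesis is precisely to ensure that the endomorphism $\alpha^n$ intertwines the bracket and the cobracket, which is what makes it an admissible twisting morphism — without it, $\alpha^n$ need not respect $[\cdot ,\cdot ]$ and $\Delta$ and the construction would break down. Alternatively one could argue by induction on $n$, using $\mathfrak{L}_{\alpha^{n+1}} = (\mathfrak{L}_{\alpha^n})_{\alpha}$ and applying Theorem \ref{aaa} with $\beta = \alpha$ at each step (noting that $\alpha$ remains a multiplicative morphism of $\mathfrak{L}_{\alpha^n}$), but the one-shot argument with $\beta = \alpha^n$ is the cleanest.
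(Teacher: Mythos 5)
Your proposal is correct and follows exactly the paper's route: the paper's entire proof is the one-line observation that this is the $\beta=\alpha^{n}$ special case of Theorem \ref{aaa}. Your additional verification that $\alpha^{n}$ is indeed a Hom-Lie superbialgebra morphism (using multiplicativity to iterate the intertwining identities) is a worthwhile filling-in of a detail the paper leaves implicit.
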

\begin{proof} This is the $\beta=\alpha^{n}$ special case of Theorem \ref{aaa}.
\end{proof}
Next we consider when Hom-Lie superbialgebra of the from $ \mathfrak{L}_{\beta}$, as in Corollary \ref{aaaa}, are isomorphic.
\begin{thm} \label{bb} Let $\mathfrak{g}$ and $\mathfrak{h}$ be Lie superbialgebras. Let  $\alpha:\mathfrak{g}\rightarrow\mathfrak{g}$ and $\beta:\mathfrak{h}\rightarrow\mathfrak{h}$ be Lie superbialgebras morphisms with $\beta$ and $\beta^{\otimes{2}}$ injective. Then the following statements are equivalent :\\
1) The Hom-Lie superbialgebras $\mathfrak{g}_{\alpha}$ and $\mathfrak{h}_{\beta}$ as in Corollary \ref{aaaa}, are isomorphic.\\
2) There exists a Lie superbialgebra isomorphism $\gamma:\mathfrak{g}\rightarrow\mathfrak{h}$ such that $\gamma\alpha=\beta\gamma.$
\end{thm}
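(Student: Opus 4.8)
The plan is to prove the two implications separately, the key point being that, by Corollary~\ref{aaaa}, the twisting maps of $\mathfrak{g}_\alpha$ and $\mathfrak{h}_\beta$ are $\alpha$ and $\beta$ themselves; hence for a linear map $\gamma$ the requirement of commuting with the twisting maps is literally $\gamma\alpha=\beta\gamma$, which is precisely the extra relation appearing in~$2)$. For $2)\Rightarrow 1)$ I would check that a Lie superbialgebra isomorphism $\gamma:\mathfrak{g}\rightarrow\mathfrak{h}$ with $\gamma\alpha=\beta\gamma$ is, as the same even bijection, an isomorphism $\mathfrak{g}_\alpha\rightarrow\mathfrak{h}_\beta$: it commutes with the twisting maps by hypothesis, one has $\gamma\circ[\cdot,\cdot]_\alpha=\gamma\circ\alpha\circ[\cdot,\cdot]_{\mathfrak g}=\beta\circ\gamma\circ[\cdot,\cdot]_{\mathfrak g}=\beta\circ[\cdot,\cdot]_{\mathfrak h}\circ\gamma^{\otimes2}=[\cdot,\cdot]_\beta\circ\gamma^{\otimes2}$, and likewise $\Delta_\beta\circ\gamma=\Delta_{\mathfrak h}\circ\beta\circ\gamma=\Delta_{\mathfrak h}\circ\gamma\circ\alpha=\gamma^{\otimes2}\circ\Delta_{\mathfrak g}\circ\alpha=\gamma^{\otimes2}\circ\Delta_\alpha$. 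This direction uses no injectivity hypothesis and is routine bookkeeping.

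For $1)\Rightarrow 2)$, let $\gamma:\mathfrak{g}_\alpha\rightarrow\mathfrak{h}_\beta$ be an isomorphism of Hom-Lie superbialgebras; it is an even bijection, and commuting with the twisting maps already gives $\gamma\alpha=\beta\gamma$, so the compatibility demanded in~$2)$ is in hand and it only remains to show $\gamma$ intertwines the \emph{untwisted} brackets and cobrackets of $\mathfrak{g}$ and $\mathfrak{h}$. For the brackets the morphism condition reads $\gamma\circ\alpha\circ[\cdot,\cdot]_{\mathfrak g}=\beta\circ[\cdot,\cdot]_{\mathfrak h}\circ\gamma^{\otimes2}$; substituting $\gamma\alpha=\beta\gamma$ on the left gives $\beta\circ\bigl(\gamma\circ[\cdot,\cdot]_{\mathfrak g}-[\cdot,\cdot]_{\mathfrak h}\circ\gamma^{\otimes2}\bigr)=0$, and injectivity of $\beta$ yields $\gamma\circ[\cdot,\cdot]_{\mathfrak g}=[\cdot,\cdot]_{\mathfrak h}\circ\gamma^{\otimes2}$.

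The cobracket is the delicate step, and I expect it to be the main obstacle. The morphism condition reads $\Delta_{\mathfrak h}\circ\beta\circ\gamma=\gamma^{\otimes2}\circ\Delta_{\mathfrak g}\circ\alpha$, and one cannot cancel the trailing $\alpha$ directly since $\alpha$ need not be surjective. The trick is to use that $\alpha$ and $\beta$, being morphisms of the Lie superbialgebras $\mathfrak{g}$ and $\mathfrak{h}$, satisfy $\Delta_{\mathfrak g}\circ\alpha=\alpha^{\otimes2}\circ\Delta_{\mathfrak g}$ and $\Delta_{\mathfrak h}\circ\beta=\beta^{\otimes2}\circ\Delta_{\mathfrak h}$. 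The left side then equals $\beta^{\otimes2}\circ\Delta_{\mathfrak h}\circ\gamma$ and the right side equals $\gamma^{\otimes2}\circ\alpha^{\otimes2}\circ\Delta_{\mathfrak g}=(\gamma\alpha)^{\otimes2}\circ\Delta_{\mathfrak g}=(\beta\gamma)^{\otimes2}\circ\Delta_{\mathfrak g}=\beta^{\otimes2}\circ\gamma^{\otimes2}\circ\Delta_{\mathfrak g}$, whence $\beta^{\otimes2}\circ\bigl(\Delta_{\mathfrak h}\circ\gamma-\gamma^{\otimes2}\circ\Delta_{\mathfrak g}\bigr)=0$, and injectivity of $\beta^{\otimes2}$ gives $\Delta_{\mathfrak h}\circ\gamma=\gamma^{\otimes2}\circ\Delta_{\mathfrak g}$. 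Thus $\gamma$ is an even bijection intertwining the brackets and cobrackets of $\mathfrak{g}$ and $\mathfrak{h}$ with $\gamma\alpha=\beta\gamma$, i.e.\ a Lie superbialgebra isomorphism realizing~$2)$.

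In short, the content of the theorem lies in the $1)\Rightarrow 2)$ cobracket computation: the two injectivity hypotheses on $\beta$ are used not to invert $\alpha$ on the right, but --- after transporting the trailing $\alpha$ past $\Delta_{\mathfrak g}$ and the leading $\beta$ past $\Delta_{\mathfrak h}$ via their morphism property --- to cancel $\beta$ (for the bracket) and $\beta^{\otimes2}$ (for the cobracket) on the left. Everything else is straightforward manipulation with even maps and the formulas $[\cdot,\cdot]_\bullet=\bullet\circ[\cdot,\cdot]$, $\Delta_\bullet=\Delta\circ\bullet$ from Corollary~\ref{aaaa}, together with the definition of a morphism of Hom-Lie superbialgebras.
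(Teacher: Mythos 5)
Your proof is correct and takes essentially the same route as the paper: for $1)\Rightarrow 2)$ you read off $\gamma\alpha=\beta\gamma$ from the morphism condition, cancel $\beta$ to recover the untwisted bracket, and use the morphism properties of $\alpha$ and $\beta$ with respect to $\Delta$ together with $\gamma\alpha=\beta\gamma$ to bring both sides under $\beta^{\otimes 2}$ and cancel, exactly as in the paper's computation. Your explicit verification of $2)\Rightarrow 1)$ merely fills in what the paper dismisses as ``a similar argument.''
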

\begin{proof} To show that the first statement implies the second statement, suppose that $\gamma:\mathfrak{g}_{\alpha}\rightarrow\mathfrak{h}_{\beta}$ is an isomorphism of Hom-Lie superbialgebras. Then $\gamma\alpha=\beta\gamma$ automatically.\\
To see that $\gamma$ is a Lie superbialgebra isomorphism, first we check that it commutes with the Lie bracket. For any two elements $x$ and $y$ in $\mathfrak{g}$, we have
$$\beta\gamma[x,y]=\gamma\alpha[x,y]
 =\gamma([x,y]_{\alpha})
 =[\gamma(x),\gamma(y)]_{\beta}
 =\beta[\gamma(x),\gamma(y)].$$
Since $\beta$ is injective, we conclude that $\gamma[x,y]=[\gamma(x),\gamma(y)],$
i.e., $\gamma$ is a Lie superbialgebra isomorphism.\\
To check that $\gamma$ commutes with the Lie cobrackets, we compute as follows:
\begin{align*}
\beta^{\otimes{2}}(\gamma^{\otimes{2}}(\Delta(x)))&=(\beta\gamma)^{\otimes{2}}(\Delta(x))
 =(\gamma\alpha)^{\otimes{2}}(\Delta(x))
 =\gamma^{\otimes{2}}(\alpha^{\otimes{2}}(\Delta(x)))
 =\gamma^{\otimes{2}}(\Delta_{\alpha}(x))
 =\Delta_{\beta}(\gamma(x))\\
&=\beta^{\otimes{2}}(\Delta(\gamma(x))).
\end{align*}
The injectivity of $\beta^{\otimes{2}}$ now implies that $\gamma$ commutes with the Lie cobrackets. Therefore, $\gamma$ is a Lie superbialgebra isomorphism. 
The other implication is proved by a similar argument, much of which is already given above.
\end{proof}
For a Lie superbialgebra $\mathfrak{g}$,  let Aut$(\mathfrak{g})$ be the group of Lie superbialgebra isomorphisms from $\mathfrak{g}$ to $\mathfrak{g}$. In Theorem \ref{bb}, restricting to the case $\mathfrak{g}=\mathfrak{h}$ with $\alpha$ and $\beta$ both invertible, we obtain the following special case.
\begin{cor}\label{bbb}Let $\mathfrak{g}$ be a Lie superbialgebra and $\alpha, \beta\in$ Aut$(\mathfrak{g})$. Then the Hom-Lie superbialgebras $\mathfrak{g}_{\alpha}$ and $\mathfrak{g}_{\beta}$, as in Corollary \ref{aaaa}, are isomorphic if and only if $\alpha$ and $\beta$ are conjugate in Aut$(\mathfrak{g})$.
\end{cor}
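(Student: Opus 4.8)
The plan is to obtain this as the $\mathfrak{g}=\mathfrak{h}$ specialization of Theorem~\ref{bb}. First I would note that since $\alpha,\beta\in$ Aut$(\mathfrak{g})$ they are in particular bijective, so $\beta$ and $\beta^{\otimes2}$ are injective, and the hypotheses of Theorem~\ref{bb} are satisfied with $\mathfrak{h}=\mathfrak{g}$.

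Then, by Theorem~\ref{bb}, the Hom-Lie superbialgebras $\mathfrak{g}_{\alpha}$ and $\mathfrak{g}_{\beta}$ of Corollary~\ref{aaaa} are isomorphic if and only if there exists a Lie superbialgebra isomorphism $\gamma:\mathfrak{g}\rightarrow\mathfrak{g}$ with $\gamma\alpha=\beta\gamma$. Such a $\gamma$ is precisely an element of Aut$(\mathfrak{g})$, hence invertible, and the relation $\gamma\alpha=\beta\gamma$ is equivalent to $\beta=\gamma\alpha\gamma^{-1}$; that is, $\alpha$ and $\beta$ are conjugate in Aut$(\mathfrak{g})$. For the converse one simply reverses this chain: given $\gamma\in$ Aut$(\mathfrak{g})$ with $\beta=\gamma\alpha\gamma^{-1}$ one has $\gamma\alpha=\beta\gamma$, and the ``2)$\Rightarrow$1)'' implication of Theorem~\ref{bb} produces the desired isomorphism $\mathfrak{g}_{\alpha}\cong\mathfrak{g}_{\beta}$.

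There is essentially no obstacle here, since Theorem~\ref{bb} already does all the work; the only things to verify are the purely formal identification of a Lie superbialgebra self-isomorphism of $\mathfrak{g}$ with an element of Aut$(\mathfrak{g})$, and the trivial rearrangement of the intertwining relation $\gamma\alpha=\beta\gamma$ into the conjugation relation $\beta=\gamma\alpha\gamma^{-1}$, which uses nothing beyond the invertibility of $\gamma$. If one wishes to be fully self-contained, one may additionally record that the isomorphism realizing $\mathfrak{g}_{\alpha}\cong\mathfrak{g}_{\beta}$ is $\gamma$ itself regarded as a map $\mathfrak{g}_{\alpha}\to\mathfrak{g}_{\beta}$, but this is already implicit in the proof of Theorem~\ref{bb}.
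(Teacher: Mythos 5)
Your proposal is correct and follows exactly the route the paper takes: the paper presents Corollary \ref{bbb} as the immediate specialization of Theorem \ref{bb} to $\mathfrak{g}=\mathfrak{h}$ with $\alpha,\beta$ invertible, and your observation that the intertwining relation $\gamma\alpha=\beta\gamma$ with $\gamma\in\mathrm{Aut}(\mathfrak{g})$ is precisely conjugacy is the only additional content needed. Nothing is missing.
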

Corollary \ref{bbb} can be restated as follows.
\begin{cor} Let $\mathfrak{g}$ be a Lie superbialgebra.Then there is a bijection between the following two sets:\\
1) The set of isomorphism classes of Hom-Lie superbialgebras $\mathfrak{g}_{\alpha}$ with $\alpha$ invertible.\\
2) The set of conjugacy classes in the group Aut$(\mathfrak{g})$.
\end{cor}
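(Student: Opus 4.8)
The plan is to reduce the statement to Corollary~\ref{bbb}, which is stated just above and may be assumed, by unwinding the definitions of the two sets being compared. Write $\mathrm{Aut}(\mathfrak{g})$ for the group of Lie superbialgebra automorphisms of $\mathfrak{g}$. First I would recall that, by Corollary~\ref{aaaa}, every $\alpha\in\mathrm{Aut}(\mathfrak{g})$ produces a multiplicative Hom-Lie superbialgebra $\mathfrak{g}_{\alpha}$, so the assignment $\alpha\mapsto[\mathfrak{g}_{\alpha}]$ (isomorphism class) defines a map $\Phi$ from $\mathrm{Aut}(\mathfrak{g})$ to the set in (1). Since any invertible endomorphism giving a Hom-structure $\mathfrak{g}_\alpha$ on the fixed Lie superbialgebra $\mathfrak{g}$ is by definition an element of $\mathrm{Aut}(\mathfrak{g})$, the map $\Phi$ is surjective onto the set of isomorphism classes described in (1).

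Next I would observe that, by Corollary~\ref{bbb}, for $\alpha,\beta\in\mathrm{Aut}(\mathfrak{g})$ we have $[\mathfrak{g}_{\alpha}]=[\mathfrak{g}_{\beta}]$ (i.e. $\mathfrak{g}_{\alpha}\cong\mathfrak{g}_{\beta}$ as Hom-Lie superbialgebras) if and only if $\alpha$ and $\beta$ are conjugate in $\mathrm{Aut}(\mathfrak{g})$, that is, if and only if they lie in the same conjugacy class. Thus $\Phi$ factors through the quotient by the conjugation action and descends to a well-defined map $\bar\Phi$ from the set of conjugacy classes in $\mathrm{Aut}(\mathfrak{g})$ to the set in (1); the "only if" direction of Corollary~\ref{bbb} shows $\bar\Phi$ is injective, while surjectivity of $\bar\Phi$ is inherited from that of $\Phi$. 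Hence $\bar\Phi$ is the desired bijection, with inverse sending $[\mathfrak{g}_{\alpha}]$ to the conjugacy class of $\alpha$.

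The one point requiring a little care — and the only place this is not a pure formality — is the surjectivity claim: I must argue that \emph{every} isomorphism class of Hom-Lie superbialgebras of the form $\mathfrak{g}_{\alpha}$ with $\alpha$ invertible is hit, and the subtlety is purely notational, namely that "$\alpha$ invertible" in (1) already means $\alpha$ is an invertible Lie superbialgebra morphism of $\mathfrak{g}$, hence $\alpha\in\mathrm{Aut}(\mathfrak{g})$, so no class is missed. I expect this to be the main (and essentially only) obstacle, and it is minor: one just needs to note that the definition of $\mathfrak{g}_\alpha$ in Corollary~\ref{aaaa} requires $\alpha$ to be a Lie superbialgebra morphism, so invertibility of such an $\alpha$ places it in $\mathrm{Aut}(\mathfrak{g})$. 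With that observed, the bijection is immediate from Corollary~\ref{bbb}, and the proof is short: it is simply the statement that a group action induces a bijection between its orbit set and the set of equivalence classes of the objects it acts on transitively-on-fibers, applied to $\mathrm{Aut}(\mathfrak{g})$ acting by conjugation.
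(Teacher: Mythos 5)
Your proposal is correct and matches the paper's intent exactly: the paper offers no separate proof, merely noting that this corollary is a restatement of Corollary \ref{bbb}, and your argument is precisely the careful unwinding of that restatement (the map $\alpha\mapsto[\mathfrak{g}_{\alpha}]$ descends to a bijection on conjugacy classes by Corollary \ref{bbb}). Your observation about surjectivity --- that ``$\alpha$ invertible'' already forces $\alpha\in\mathrm{Aut}(\mathfrak{g})$ --- is the right and only point of care.
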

The next result shows that finite dimensional Hom-Lie superbialgebras, like Lie superbialgebras, can be dualized. A proof of this self-dual property for the special of Lie bialgebras can be found in \cite{Majid}.
\begin{remark}\label{01}
$\bullet$ If $(\mathfrak{L}, \Delta, \alpha)$ is a Hom-Lie supercoalgebra, then $(\mathfrak{L}^{\ast}, [\cdot ,\cdot ] , \alpha)$ is a Hom-Lie superalgebra. Here $[\cdot ,\cdot ]$ and $\alpha$ in $\mathfrak{L}^{\ast}$ are dual to $\Delta$ and $\alpha$, respectively, in $\mathfrak{L}$.\\
$\bullet$ Conversely, if $(\mathfrak{L}, [\cdot ,\cdot ] , \alpha)$ is a finite dimensional Hom-Lie superalgebra, then $(\mathfrak{L}^{\ast}, \Delta , \alpha)$ is a Hom-Lie supercoalgebra, where $\Delta$ and $\alpha$ in $\mathfrak{L}^{\ast}$ are dual to $[\cdot ,\cdot ]$ and $\alpha$, respectively, in $\mathfrak{L}$.
\end{remark}
\begin{thm}\label{amineeeee} Let $(\mathfrak{L}, [\cdot ,\cdot ], \Delta, \alpha)$ be a finite dimensional (multiplicative) Hom-Lie superbialgebra. Then its linear dual $\mathfrak{L^\ast}=$Hom$(\mathfrak{L}, \mathbb{K})$ is also a (multiplicative) Hom-Lie superbialgebra with the dual structure maps:
\begin{equation}\label{amine}
\alpha(\phi)= \phi\circ\alpha,\ \ \
\langle[\phi, \psi], x \rangle=\langle \phi\otimes\psi, \Delta(x)\rangle, \ \ \
\langle\Delta(\phi), x\otimes y\rangle=\langle \phi, [x,y]\rangle,\\ \end{equation}
for $x,y\in\mathfrak{L}$ and $\phi,\psi\in\mathfrak{L^\ast}$.
\end{thm}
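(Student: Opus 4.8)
The plan is to verify the three defining conditions of a Hom-Lie superbialgebra (Definition \ref{ae}) for the quadruple $(\mathfrak{L}^\ast, [\cdot,\cdot], \Delta, \alpha)$ with the dual structure maps \eqref{amine}, using throughout the finite-dimensionality of $\mathfrak{L}$ (so that $(\mathfrak{L}^{\otimes n})^\ast \cong (\mathfrak{L}^\ast)^{\otimes n}$ canonically) and the natural pairing $\langle\cdot,\cdot\rangle$. Two of the three conditions are already recorded in Remark \ref{01}: the map $[\cdot,\cdot]$ dual to $\Delta$ makes $(\mathfrak{L}^\ast,[\cdot,\cdot],\alpha)$ a Hom-Lie superalgebra (this is the first bullet), and the map $\Delta$ dual to $[\cdot,\cdot]$ makes $(\mathfrak{L}^\ast,\Delta,\alpha)$ a Hom-Lie supercoalgebra (this is the second bullet, applied to the finite dimensional $\mathfrak{L}$). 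So the real content is the compatibility condition \eqref{a} for $\mathfrak{L}^\ast$, and the multiplicativity statement.

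First I would set up the dualization dictionary carefully: record that under the isomorphism $(\mathfrak{L}^{\otimes 2})^\ast \cong (\mathfrak{L}^\ast)^{\otimes 2}$ the transpose of $\tau$ on $\mathfrak{L}^{\otimes 2}$ is $\tau$ on $(\mathfrak{L}^\ast)^{\otimes 2}$, the transpose of $\xi$ is $\xi^2$ (or $\xi$, up to the chosen convention — I'd fix this once), and, crucially, that the transpose of the adjoint action $ad_{\alpha(x)}\colon \mathfrak{L}^{\otimes 2}\to\mathfrak{L}^{\otimes 2}$ appearing in \eqref{a} is, up to sign bookkeeping, exactly the operator that shows up on the right-hand side of the compatibility condition on $\mathfrak{L}^\ast$. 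Concretely, I would show that for $\phi,\psi\in\mathfrak{L}^\ast$ and $x,y\in\mathfrak{L}$,
\begin{equation*}
\langle \Delta_{\mathfrak{L}^\ast}([\phi,\psi]), x\otimes y\rangle = \langle [\phi,\psi], [x,y]\rangle = \langle \phi\otimes\psi, \Delta([x,y])\rangle,
\end{equation*}
and then substitute the compatibility condition \eqref{a} valid in $\mathfrak{L}$ to rewrite $\Delta([x,y])$ as $ad_{\alpha(x)}(\Delta(y)) - (-1)^{|x||y|} ad_{\alpha(y)}(\Delta(x))$. The task is then to recognize the resulting expression, after moving everything onto the $\mathfrak{L}^\ast$ side via the pairing, as $\langle ad_{\alpha(\phi)}(\Delta(\psi)) - (-1)^{|\phi||\psi|} ad_{\alpha(\psi)}(\Delta(\phi)), x\otimes y\rangle$, where now $\Delta$ on $\mathfrak{L}^\ast$ is dual to $[\cdot,\cdot]$, $[\cdot,\cdot]$ on $\mathfrak{L}^\ast$ is dual to $\Delta$, and $\alpha(\phi) = \phi\circ\alpha$. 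This is the self-dual symmetry of condition \eqref{a}: it pairs the bracket with the cobracket in a way that is invariant under transpose.

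The step I expect to be the main obstacle is the sign bookkeeping in that last identification — verifying that the Koszul signs produced by dualizing $ad_{\alpha(x)}$ (which itself carries the sign $(-1)^{|x||y_1|}$ in its definition, plus the signs from $\tau$ and from evaluating $\phi\otimes\psi$ on a decomposable tensor in the "wrong" order) assemble exactly into the signs $(-1)^{|\phi||\psi|}$ and $(-1)^{|\phi||x_1|}$-type terms demanded by the compatibility condition on $\mathfrak{L}^\ast$. I would handle this by working with homogeneous $\phi,\psi,x,y$ and decomposable $\Delta(x) = x_1\otimes x_2$, $\Delta(\psi) = \psi_1\otimes\psi_2$, expanding both sides completely, and matching term by term; the parity relations $|[\phi,\psi]| = |\phi|+|\psi|$ (from \eqref{2007} dualized) and $|\alpha(\phi)| = |\phi|$ keep the accounting consistent. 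Finally, for multiplicativity, I would note that if $\alpha\circ[\cdot,\cdot] = [\cdot,\cdot]\circ\alpha^{\otimes 2}$ and $\Delta\circ\alpha = \alpha^{\otimes 2}\circ\Delta$ on $\mathfrak{L}$, then taking transposes and using $\alpha_{\mathfrak{L}^\ast} = \alpha^\ast$ immediately gives the co-multiplicativity and multiplicativity identities on $\mathfrak{L}^\ast$; this is a short formal computation with the pairing and requires no new ideas. $\Box$
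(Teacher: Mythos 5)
Your proposal follows essentially the same route as the paper: delegate the Hom-Lie superalgebra and supercoalgebra structures on $\mathfrak{L}^\ast$ to Remark \ref{01}, then verify the compatibility condition \eqref{a} by evaluating $\langle\Delta([\phi,\psi]),x\otimes y\rangle=\langle\phi\otimes\psi,\Delta([x,y])\rangle$, substituting the compatibility condition of $\mathfrak{L}$, and transposing term by term across the pairing with careful Koszul-sign bookkeeping until the expression is recognized as $\langle ad_{\alpha(\phi)}(\Delta(\psi))-(-1)^{|\phi||\psi|}ad_{\alpha(\psi)}(\Delta(\phi)),x\otimes y\rangle$. The strategy, the key identity, and the identified difficulty (the sign matching, resolved via skew-supersymmetry of the dual bracket and cobracket) all coincide with the paper's argument.
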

\begin{proof} As we mentioned right after Remark \ref{01}, $(\mathfrak{L}^{\ast}, [\cdot ,\cdot ] , \alpha)$ is a Hom-Lie superalgebra, which is true even if $\mathfrak{L}$ is not finite dimensional. Moreover, $(\mathfrak{L}^{\ast}, \Delta, \alpha)$ is a Hom-Lie supercoalgebra, whose validity depends on the finite dimensionality of $\mathfrak{L}$. Thus, it remains to check the compatibility condition (\ref{a}) between the bracket and the cobracket in $\mathfrak{L}^{\ast}$, i.e.,
\begin{eqnarray}\label{cc}
\langle\Delta([\phi, \psi]),x\otimes y\rangle=\langle ad_{\alpha(\phi)}(\Delta(\psi))-(-1)^{{|\phi|}{|\psi|}}ad_{\alpha(\psi)}(\Delta(\phi)), x\otimes y\rangle
\end{eqnarray}
for $x,y\in\mathfrak{L}$ and $\phi,\psi\in\mathfrak{L^\ast}$.\\
Using Definition \ref{amine}, the compatibility condition (\ref{a}) in $\mathfrak{L}$, we compute the left-hand side of (\ref{cc}) as follows:\\
\begin{align*}
\langle\Delta([\phi, \psi]),x\otimes y\rangle&=
\langle[\phi, \psi], [x, y]\rangle\\
&=\langle \phi\otimes\psi, \Delta([x, y])\rangle\\
&=\langle \phi\otimes\psi,   ad_{\alpha(x)}(\Delta(y))-(-1)^{{|x|}{|y|}}ad_{\alpha(y)}(\Delta(x))\rangle\\
&=\langle \phi\otimes\psi,[\alpha(x), y_{1}]\otimes\alpha(y_{2})\rangle + (-1)^{{|x|}{|y_{1}|}}\langle \phi\otimes\psi, \alpha(y_{1})\otimes[\alpha(x), y_{2}]\rangle\\
&-(-1)^{{|x|}{|y|}}\langle\phi\otimes\psi, [\alpha(y), x_{1}]\otimes\alpha(x_{2})\rangle - (-1)^{{|x|}{|y|}}(-1)^{{|y|}{|x_{1}|}}\langle \phi\otimes\psi, \alpha(x_{1})\otimes[\alpha(y), x_{2}]\rangle\\
&=\langle \Delta(\phi)\otimes\psi,\alpha(x)\otimes y_{1}\otimes\alpha(y_{2})\rangle + (-1)^{{|x|}{|y_{1}|}}\langle \phi\otimes\Delta(\psi), \alpha(y_{1})\otimes\alpha(x)\otimes y_{2}\rangle\\
&-(-1)^{{|x|}{|y|}}\langle\Delta(\phi)\otimes\psi, \alpha(y)\otimes x_{1}\otimes\alpha(x_{2})\rangle\\ &-(-1)^{{|x|}{|y|}}(-1)^{{|y|}{|x_{1}|}}\langle \phi\otimes\Delta (\psi), \alpha(x_{1})\otimes\alpha(y)\otimes x_{2}\rangle\\
&=\langle \phi_{1}\otimes\phi_{2}\otimes\psi,\alpha(x)\otimes y_{1}\otimes\alpha(y_{2})\rangle + (-1)^{{|x|}{|y_{1}|}}\langle \phi\otimes\psi_{1}\otimes\psi_{2}, \alpha(y_{1})\otimes\alpha(x)\otimes y_{2}\rangle\\
&-(-1)^{{|x|}{|y|}}\langle\phi_{1}\otimes\phi_{2}\otimes\psi, \alpha(y)\otimes x_{1}\otimes\alpha(x_{2})\rangle\\ &-(-1)^{{|x|}{|y|}}(-1)^{{|y|}{|x_{1}|}}\langle\phi\otimes\psi_{1}\otimes\psi_{2}, \alpha(x_{1})\otimes\alpha(y)\otimes x_{2}\rangle\\
&=\langle \alpha(\phi_{1})\otimes\phi_{2}\otimes\alpha(\psi), x\otimes y_{1}\otimes y_{2}\rangle + (-1)^{{|x|}{|y_{1}|}}\langle \alpha(\phi)\otimes\alpha(\psi_{1})\otimes\psi_{2}, y_{1}\otimes x\otimes y_{2}\rangle \\
&-(-1)^{{|x|}{|y|}}\langle\alpha(\phi_{1})\otimes\phi_{2}\otimes\alpha(\psi), y\otimes x_{1}\otimes x_{2}\rangle\\ &-(-1)^{{|x|}{|y|}}(-1)^{{|y|}{|x_{1}|}}\langle\alpha(\phi)\otimes\alpha(\psi_{1})\otimes\psi_{2}, x_{1}\otimes y\otimes x_{2}\rangle.
\end{align*}
Using, in addition, the skew-supersymmetric of the bracket and the cobracket in $\mathfrak{L}^{\ast}$,\\
 $([\phi_{1}, \alpha(\psi)]=-(-1)^{{|\phi_{1}|}{|\psi|}}[\alpha(\psi), \phi_{1}]$ and $\phi_{1}\otimes\phi_{2}=-(-1)^{{|\phi_{1}|}{|\phi_{2}|}}\phi_{2}\otimes\phi_{1}$).\\
The above four terms become:
\begin{align*}
&=-(-1)^{{|\phi|}{|\psi|}}(-1)^{{|\psi|}{|\phi_{1}|}}\langle\alpha(\phi_{1})\otimes[\alpha(\psi), \phi_{2}], x\otimes y\rangle + (-1)^{{|\phi|}{|\psi_{1}|}}\langle\alpha(\psi_{1})\otimes[\alpha(\phi), \psi_{2}], x\otimes y\rangle\\
&-(-1)^{{|\phi|}{|\psi|}}\langle[\alpha(\psi), \phi_{1}]\otimes\alpha(\phi_{2}), x\otimes y\rangle + \langle[\alpha(\phi), \psi_{1}]\otimes\alpha(\psi_{2}), x\otimes y\rangle.\\
&=\langle ad_{\alpha(\phi)}(\Delta(\psi))-(-1)^{{|\phi|}{|\psi|}}ad_{\alpha(\psi)}(\Delta(\phi)), x\otimes y\rangle
\end{align*}
This is exactly the right-hand side of (\ref{cc}).
\end{proof}
\section{Matched pairs, Hom-Lie superbialgebras and Manin supertriples}
In this section, we introduce the notions of  matched pair of Hom-Lie superalgebras and a Manin supertriple of Hom-Lie superalgebras. First, we recall the basics about   representations of a Hom-Lie superalgebras.

\begin{definition}\label{samak} (\cite{Ammar F Makhlouf A 2,Sheng,Sheng1}) Let $(\mathfrak{L}, [\cdot ,\cdot ], \alpha)$ be a Hom-Lie superalgebra and $M=M_{\bar{0}}\oplus M_{\bar{1}}$ an arbitrary vector superspace. A  representation  of the  Hom-Lie superalgebra with respect to $A\in \mathfrak{gl}(M)$, is an even linear map $\rho:\mathfrak{L} \rightarrow End(M)$, such that $\rho(\mathfrak{L_{i}})(M_{j})\subset M_{i+j}$ where $i, j\in\mathbb{Z}_{2}$, and satisfying
\begin{equation}\label{1 condition}
\rho(\alpha(x))\circ A=A\circ\rho(x)
\end{equation}
and
\begin{equation}\label{2 condition}
\rho([x, y])\circ A=\rho(\alpha(x))\circ\rho(y)-(-1)^{{|x|}{|y|}}\rho(\alpha(y))\circ\rho(x).
\end{equation}
for all homogeneous elements $x, y \in\mathfrak{L}$.\\
We denote a representation by $(M,  \rho,A)$. It is straightforward to see that $(\mathfrak{L}, ad, \alpha)$ is a representation, called the adjoint representation, see \cite{Benayadi Makhlouf,Sheng}.\\
Given a representation $(M,  \rho,A)$, define $\rho^{\ast}:\mathfrak{L}\rightarrow End(M^{\ast})$ by
\begin{equation}
\langle\rho^{\ast}(x)(\xi), \nu\rangle=-(-1)^{{|x|}{|\xi|}}\langle\xi,\rho(x)(\nu)\rangle,
\end{equation}
$\forall$ $x\in\mathfrak{L}$, $\xi\in M^{\ast}$, $\nu\in M.$ This representation is called admissible representation with respect to $(M,  \rho,A)$.
\end{definition}


Now, let $(\mathfrak{g}, [\cdot ,\cdot ]_{\mathfrak{g}}, \alpha_{\mathfrak{g}})$ and $(\mathfrak{g'}, [\cdot ,\cdot ]_{\mathfrak{g'}}, \alpha_{\mathfrak{g'}})$ be two multiplicative Hom-Lie superalgebras. Set $\mathfrak{g}=\mathfrak{g}_{\bar{0}}\oplus\mathfrak{g}_{\bar{1}}$ and $\mathfrak{g'}=\mathfrak{g'}_{\bar{0}}\oplus\mathfrak{g'}_{\bar{1}}$. Let $\rho:\mathfrak{g} \rightarrow \mathfrak{gl}(\mathfrak{g'})$ and $\rho':\mathfrak{g'} \rightarrow \mathfrak{gl}(\mathfrak{g})$ be two linear maps. 
Define a skew-supersymmetric bracket $[\cdot ,\cdot ]_{\widetilde{G}}: \widetilde{G}\times\widetilde{G}\rightarrow \widetilde{G}$, where $\widetilde{G}$ is given by : $\widetilde{G}=\widetilde{G}_{\bar{0}}\oplus \widetilde{G}_{\bar{1}}$, where $\widetilde{G}_{\bar{0}}=\mathfrak{g}_{\bar{0}}\oplus\mathfrak{g'}_{\bar{0}}$ and $\widetilde{G}_{\bar{1}}=\mathfrak{g}_{\bar{1}}\oplus\mathfrak{g'}_{\bar{1}}$. We set  $|(x,x')|=|x|=|x'|$ and $|(y,y')|=|y|=|y'|$, for all homogeneous elements $x, y$  in $\mathfrak{g}$ and $x', y'$ in  $\mathfrak{g'}$. Define $\widetilde{\alpha}:\widetilde{G}\rightarrow\widetilde{G}$ by
$$\widetilde{\alpha}(x, x')=(\alpha_{\mathfrak{g}}(x), \alpha_{\mathfrak{g'}}(x')),$$ and the bracket $\widetilde{G}$ by
\begin{equation}\label{zara}
\textbf{[}(x, x'), (y, y')\textbf{]}_{\widetilde{G}}=\textbf{(}[x, y]_{\mathfrak{g}}-(-1)^{{|x|}{|y|}}\rho'(y')(x)+\rho'(x')(y), [x', y']_{\mathfrak{g'}}+\rho(x)(y')-(-1)^{{|x|}{|y|}}\rho(y)(x')\textbf{)}.
\end{equation}
\begin{thm}\label{nouveau} The triple $(\widetilde{G}=\mathfrak{g}\oplus\mathfrak{g'}, [\cdot ,\cdot ]_{\widetilde{G}}, \widetilde{\alpha})$, where $\widetilde{G}$, $[\cdot ,\cdot ]_{\widetilde{G}}$, $\widetilde{\alpha}$ are defined above is a multiplicative Hom-Lie superalgebra if and only if $\rho$ and $\rho'$ are representations of $\mathfrak{g}$ and $\mathfrak{g'}$ respectively and the following conditions are satisfied
\begin{align}\label{seff}
\rho(\alpha_{\mathfrak{g}}(z))([x', y']_{\mathfrak{g'}})&=[\rho(z)(x'), \alpha_{\mathfrak{g'}}(y')]_{\mathfrak{g'}}+(-1)^{{|x|}{|z|}}[\alpha_{\mathfrak{g'}}(x'), \rho(z)(y')]_{\mathfrak{g'}}
\\&+(-1)^{{{|x|}{|y|}}+{{|y|}{|z|}}}\rho(\rho'(y')(z))(\alpha_{\mathfrak{g'}}(x'))-(-1)^{{|x|}{|z|}}\rho(\rho'(x')(z))(\alpha_{\mathfrak{g'}}(y')).\nonumber\end{align}
\begin{align}\label{chevro}
\rho'(\alpha_{\mathfrak{g'}}(z'))([x, y]_{\mathfrak{g}})&=[\rho'(z')(x), \alpha_{\mathfrak{g}}(y)]_{\mathfrak{g}}+(-1)^{{|x|}{|z|}}[\alpha_{\mathfrak{g}}(x), \rho'(z')(y)]_{\mathfrak{g}}\\
&+(-1)^{{{|x|}{|y|}}+{{|y|}{|z|}}}\rho'(\rho(y)(z'))(\alpha_{\mathfrak{g}}(x))-(-1)^{{|x|}{|z|}}\rho'(\rho(x)(z'))(\alpha_{\mathfrak{g}}(y)).\nonumber\end{align}
\end{thm}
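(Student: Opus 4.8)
The plan is to verify the three defining axioms of a multiplicative Hom-Lie superalgebra for the triple $(\widetilde{G}, [\cdot ,\cdot ]_{\widetilde{G}}, \widetilde{\alpha})$ directly from the formula (\ref{zara}), and to track exactly which of the hypotheses (that $\rho,\rho'$ are representations, and the conditions (\ref{seff})--(\ref{chevro})) is forced by each piece of the resulting identity. First I would record the easy parts: skew-supersymmetry of $[\cdot ,\cdot ]_{\widetilde{G}}$ is built into the definition (by construction the bracket is visibly $-(-1)^{|X||Y|}$-symmetric, using $[\cdot ,\cdot ]_{\mathfrak{g}}$, $[\cdot ,\cdot ]_{\mathfrak{g'}}$ skew-super and the sign conventions on the $\rho$, $\rho'$ cross-terms), and that $\widetilde{\alpha}$ is an even algebra morphism for $[\cdot ,\cdot ]_{\widetilde{G}}$ precisely when $\alpha_{\mathfrak{g}}$, $\alpha_{\mathfrak{g'}}$ are multiplicative and $\rho$, $\rho'$ satisfy the twist-compatibility (\ref{1 condition}), i.e. $\rho(\alpha_{\mathfrak{g}}(x))\circ\alpha_{\mathfrak{g'}} = \alpha_{\mathfrak{g'}}\circ\rho(x)$ and symmetrically; this is a short computation componentwise.

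The real work is the Hom super-Jacobi identity (\ref{702}) for $[\cdot ,\cdot ]_{\widetilde{G}}$. I would plug $X=(x,x')$, $Y=(y,y')$, $Z=(z,z')$ into the cyclic sum $(-1)^{|x||z|}[\widetilde{\alpha}(X),[Y,Z]_{\widetilde{G}}]_{\widetilde{G}} + \text{(cyclic)}$ and expand using (\ref{zara}) twice, then collect the result into its $\mathfrak{g}$-component and its $\mathfrak{g'}$-component. Each component is a sum of several types of terms: purely iterated brackets $[\alpha(x),[y,z]]$ (which cancel by the Hom super-Jacobi identity already holding in $\mathfrak{g}$ and in $\mathfrak{g'}$); terms of the shape $\rho([x,y])(z')\circ\alpha$ versus $\rho(\alpha(x))\rho(y)(z') - (-1)^{|x||y|}\rho(\alpha(y))\rho(x)(z')$ (which cancel exactly when $\rho$, $\rho'$ satisfy the representation axiom (\ref{2 condition})); and mixed terms involving one bracket in $\mathfrak{g'}$ fed through $\rho$ together with one application of $\rho'$ composed with $\alpha$ — and it is precisely the vanishing of the total collection of these mixed terms in the $\mathfrak{g'}$-slot that is equivalent to (\ref{seff}), and in the $\mathfrak{g}$-slot equivalent to (\ref{chevro}). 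For the converse direction, one reverses the bookkeeping: specializing $X,Y,Z$ to lie in $\mathfrak{g}$ only, or in $\mathfrak{g'}$ only, isolates the Hom super-Jacobi identities and representation axioms of the two pieces, while mixed specializations (e.g.\ $x'=y'=0$ with $z'$ arbitrary) isolate exactly (\ref{seff}) and (\ref{chevro}).

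The main obstacle is the sheer combinatorial bookkeeping of Koszul signs: with a double expansion of a bracket that itself has four sign-laden summands, one gets on the order of dozens of terms in each graded component, and matching them term-by-term against the signs appearing in (\ref{seff})--(\ref{chevro}) requires care with the identifications $|(x,x')|=|x|=|x'|$ and with reindexing the cyclic sum. I would organize this by grouping terms according to how many times $\rho$ or $\rho'$ appears (zero, one, or two) and how many $\alpha$'s sit outside, since each group must vanish independently (they land in formally different "slots" once one thinks of the free variables $x,x',y,y',z,z'$ as independent). The zero-$\rho$ group gives nothing new, the two-$\rho$ group gives the representation axioms, and the one-$\rho$-plus-one-$\rho'$ group gives the genuinely new conditions (\ref{seff})--(\ref{chevro}); confirming that no further condition is hidden in a cross-term is the point that needs the most attention.
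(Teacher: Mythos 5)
Your proposal is correct and follows essentially the same route as the paper: the paper likewise extracts the twist-compatibility $\rho(\alpha_{\mathfrak{g}}(x))\circ\alpha_{\mathfrak{g'}}=\alpha_{\mathfrak{g'}}\circ\rho(x)$ (and its analogue for $\rho'$) from the multiplicativity of $\widetilde{\alpha}$, then expands the Hom-super-Jacobi identity of $[\cdot,\cdot]_{\widetilde{G}}$ componentwise and sorts the resulting terms into the representation axiom (\ref{2 condition}) and the mixed conditions (\ref{seff})--(\ref{chevro}). Your organization of the bookkeeping by the number of occurrences of $\rho$ and $\rho'$ matches what the paper does implicitly, so no further comment is needed.
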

\begin{proof} Assume $(\widetilde{G}=\mathfrak{g}\oplus\mathfrak{g'},  [\cdot ,\cdot ]_{\widetilde{G}}, \widetilde{\alpha})$ is a multiplicative Hom-Lie superalgebra. The multiplicativity condition writes 
\begin{equation}\label{multiplicative}
\widetilde{\alpha}\textbf{(}\textbf{[}(x, x'), (y, y')\textbf{]}_{\widetilde{G}}\textbf{)}=\textbf{[}\widetilde{\alpha}(x, x'), \widetilde{\alpha}(y, y')\textbf{]}_{\widetilde{G}}.
\end{equation}
Developing  (\ref{multiplicative}), leads to  the first condition  (\ref{1 condition}). Indeed 
\begin{align*}\widetilde{\alpha}\textbf{(}\textbf{[}(x, x'), (y, y')\textbf{]}_{\widetilde{G}}\textbf{)}&=\textbf{(}\alpha_{\mathfrak{g}}([x, y]_{\mathfrak{g}})-(-1)^{{|x|}{|y|}}\alpha_{\mathfrak{g}}(\rho'(y')(x))+\alpha_{\mathfrak{g}}(\rho'(x')(y)),\\& \alpha_{\mathfrak{g'}}([x', y']_{\mathfrak{g'}})+\alpha_{\mathfrak{g'}}(\rho(x)(y'))-(-1)^{{|x|}{|y|}}\alpha_{\mathfrak{g'}}(\rho(y)(x'))\textbf{)}.\end{align*}
\begin{align*}\textbf{[}\widetilde{\alpha}(x, x'), \widetilde{\alpha}(y, y')\textbf{]}_{\widetilde{G}}&
=\textbf{(}[\alpha_{\mathfrak{g}}(x), \alpha_{\mathfrak{g}}(y)]_{\mathfrak{g}}-(-1)^{{|x|}{|y|}}\rho'(\alpha_{\mathfrak{g'}}(y'))(\alpha_{\mathfrak{g}}(x))+
\rho'(\alpha_{\mathfrak{g'}}(x'))(\alpha_{\mathfrak{g}}(y)),\\& [\alpha_{\mathfrak{g'}}(x'), \alpha_{\mathfrak{g'}}(y')]_{\mathfrak{g'}}+\rho(\alpha_{\mathfrak{g}}(x))(\alpha_{\mathfrak{g'}}(y'))
-(-1)^{{|x|}{|y|}}\rho(\alpha_{\mathfrak{g}}(y))(\alpha_{\mathfrak{g'}}(x'))\textbf{)}.\end{align*}
Since $\alpha_{\mathfrak{g}}$ and $\alpha_{\mathfrak{g'}}$ are multiplicative, which implies that
\begin{equation}\label{57}
\rho(\alpha_{\mathfrak{g}}(x))\circ\alpha_{\mathfrak{g'}}=\alpha_{\mathfrak{g'}}\circ\rho(x),
\end{equation}
\begin{equation}\label{058}
\rho'(\alpha_{\mathfrak{g'}}(x'))\circ\alpha_{\mathfrak{g}}=\alpha_{\mathfrak{g}}\circ\rho'(x').
\end{equation}
Developing  the Hom-super-Jacobi identity (\ref{702}), that  for $x, y, z \in\mathfrak{g}$ and $x', y', z'\in\mathfrak{g'}$,\\
$$(-1)^{{|(x,x')|}{|(z, z')|}}[\widetilde{\alpha}(x, x'), [(y, y'), (z, z')]_{\widetilde{G}}]_{\widetilde{G}}+(-1)^{{|(y,y')|}{|(x, x')|}}[\widetilde{\alpha}(y, y'), [(z, z'), (x, x')]_{\widetilde{G}}]_{\widetilde{G}}$$
$$+(-1)^{{|(z,z')|}{|(y, y')|}}[\widetilde{\alpha}(z, z'), [(x, x'), (y, y')]_{\widetilde{G}}]_{\widetilde{G}}=0,$$
leads to  the second condition (\ref{2 condition}). Indeed,
\begin{align*}
\bullet &(-1)^{{|(x,x')|}{|(z, z')|}}[\widetilde{\alpha}(x, x'), [(y, y'), (z, z')]_{\widetilde{G}}]_{\widetilde{G}}=
(-1)^{{|x|}{|z|}}\textbf{(}[\alpha_{\mathfrak{g}}(x), [y, z]_{\mathfrak{g}}]_{\mathfrak{g}}-(-1)^{{|y|}{|z|}}[\alpha_{\mathfrak{g}}(x), \rho'(z')(y)]_{\mathfrak{g}}\\
&+[\alpha_{\mathfrak{g}}(x), \rho'(y')(z)]_{\mathfrak{g}}
-(-1)^{{|x|}({|y|}+{|z|})}\rho'([y', z']_{\mathfrak{g'}})(\alpha_{\mathfrak{g}}(x))-(-1)^{{|x|}({|y|}+{|z|})}\rho'(\rho(y)(z'))(\alpha_{\mathfrak{g}}(x))\\
&+(-1)^{{|x|}({|y|}+{|z|})}(-1)^{{|y|}{|z|}}\rho'(\rho(z)(y'))(\alpha_{\mathfrak{g}}(x))
+\rho'(\alpha_{\mathfrak{g'}}(x'))([y, z]_{\mathfrak{g}})\\
&-(-1)^{{|y|}{|z|}}\rho'(\alpha_{\mathfrak{g'}}(x'))(\rho'(z')(y))
+\rho'(\alpha_{\mathfrak{g'}}(x'))(\rho'(y')(z))\textbf{,}
\ \ [\alpha_{\mathfrak{g'}}(x'), [y', z']_{\mathfrak{g'}}]_{\mathfrak{g'}}\\
&+[\alpha_{\mathfrak{g'}}(x'),\rho(y)(z')]_{\mathfrak{g'}}
-(-1)^{{|y|}{|z|}}[\alpha_{\mathfrak{g'}}(x'), \rho(z)(y')]_{\mathfrak{g'}}
+\rho(\alpha_{\mathfrak{g}}(x))([y',z']_{\mathfrak{g'}})\\
&+\rho(\alpha_{\mathfrak{g}}(x))(\rho(y)(z'))
-(-1)^{{|y|}{|z|}}\rho(\alpha_{\mathfrak{g}}(x))(\rho(z)(y'))
-(-1)^{{|x|}({|y|}+{|z|})}\rho([y,z]_{\mathfrak{g}})(\alpha_{\mathfrak{g'}}(x'))\\
&+(-1)^{{|x|}({|y|}+{|z|})}(-1)^{{|y|}{|z|}}\rho(\rho'(z')(y))(\alpha_{\mathfrak{g'}}(x'))
-(-1)^{{|x|}({|y|}+{|z|})}\rho(\rho'(y')(z))(\alpha_{\mathfrak{g'}}(x'))\textbf{)}.\\
\end{align*}
Similarly, we compute  : $$(-1)^{{|(y,y')|}{|(x, x')|}}[\widetilde{\alpha}(y, y'), [(z, z'), (x, x')]_{\widetilde{G}}]_{\widetilde{G}} \ \  and \ \ (-1)^{{|(z,z')|}{|(y, y')|}}[\widetilde{\alpha}(z, z'), [(x, x'), (y, y')]_{\widetilde{G}}]_{\widetilde{G}}.$$
Setting  ($|x|=|x'|$,  $|y|=|y'|$, and $|z|=|z'|$ ) give
\begin{equation}\label{1122a}
\rho([x, y]_{\mathfrak{g}})\circ\alpha_{\mathfrak{g'}}=\rho(\alpha_{\mathfrak{g}}(x))\circ\rho(y)-(-1)^{{|x|}{|y|}}\rho(\alpha_{\mathfrak{g}}(y))\circ\rho(x),
\end{equation}
\begin{equation}\label{6060}
\rho'([x', y']_{\mathfrak{g'}})\circ\alpha_{\mathfrak{g}}=\rho'(\alpha_{\mathfrak{g'}}(x'))\circ\rho'(y')-(-1)^{{|x'|}{|y'|}}\rho'(\alpha_{\mathfrak{g'}}(y'))\circ\rho'(x'), \end{equation}
which implies  (\ref{seff}), and (\ref{chevro}).\\
By Eqs. (\ref{57}) and (\ref{1122a}), we deduce that $\rho$ is a representation of the Hom-Lie superalgebra $(\mathfrak{g}, [\cdot ,\cdot ]_{\mathfrak{g}}, \alpha_{\mathfrak{g}})$ on $\mathfrak{g'}$ with respect to $\alpha_{\mathfrak{g'}}$. By Eqs. (\ref{058}) and (\ref{6060}), we deduce that $\rho'$ is a representation of the Hom-Lie superalgebra $(\mathfrak{g'}, [\cdot ,\cdot ]_{\mathfrak{g'}}, \alpha_{\mathfrak{g'}})$ on $\mathfrak{g}$ with respect to $\alpha_{\mathfrak{g}}$.
\end{proof}
\begin{example}[See \cite{Ammar F Makhlouf A 2}] Given a representation $(V, [., .]_{V}, \beta)$ of a Hom-Lie superalgebra $(G, [\cdot ,\cdot ], \alpha)$. Set $\widetilde{G}=G\oplus V$ and $\widetilde{G_{k}}=G_{k}\oplus V_{k}$. If $x\in G_{i}$ and $v\in V_{i}$ $(i\in\mathbb{Z}_{2})$, we denote $|(x, v)|=|x|$.\\
Define a skew-supersymmetric bracket   $[\cdot ,\cdot ]_{\widetilde{G}}:\wedge^{2}(G\oplus V)\rightarrow G\oplus V$ by\\
$$[(x, \mu), (y, v)]_{\widetilde{G}}=([x, y], [x, v]_{V}-(-1)^{{|x|}{|y|}}[y, \mu]_{V}).$$\\
Define $\widetilde{\alpha}:G\oplus V\rightarrow G\oplus V$ by $\widetilde{\alpha}(x, v)=(\alpha(x), \beta(v))$. Then $(G\oplus V, [-, -]_{\widetilde{G}}, \widetilde{\alpha})$ is a Hom-Lie superalgebra, which we call the semi-direct product of the Hom-Lie superalgebra $(G, [\cdot ,\cdot ], \alpha)$ by $V$.
\end{example}
\begin{definition} For any $x\in\mathfrak{g}$, define $ad_{x}:\wedge^{n}\mathfrak{g}\rightarrow\wedge^{n}\mathfrak{g}$ by $ad_{x}P=[x, P]_{\mathfrak{g}}$. Its dual map $ad^{\ast}:\wedge^{n}\mathfrak{g}^{\ast}\rightarrow\wedge^{n}\mathfrak{g}^{\ast}$ is defined as 
\begin{equation}
\langle ad_{x}(y), \xi \rangle=-(-1)^{{|x|}{|y|}}\langle y, ad^{\ast}_{x}(\xi)\rangle,
\end{equation}
for all $x, y \in\mathfrak{g}$, and $\xi\in\mathfrak{g}^{\ast}$, where the pairing $\langle \   ,  \ \rangle$ is supersymmetric, i.e. $\langle x, \xi \rangle=(-1)^{{|x|}{|\xi|}}\langle \xi, x\rangle,$ for $x\in\mathfrak{g}$,\\ and $\xi\in\mathfrak{g}^{\ast}$. More precisely, for any $\xi_{1},\cdot\cdot\cdot,\xi_{n}\in\mathfrak{g}^{\ast}$, we have
\begin{equation}
ad^{\ast}_{x}(\xi_{1}\wedge\cdot\cdot\cdot\wedge\xi_{n})=\sum_{i=1}^{n} (-1)^{{|x|}({|\xi_{1}|+{|\xi_{2}|}}+\cdot\cdot\cdot+{|\xi_{i-1}|})}\alpha^{\ast}_{\mathfrak{g}}(\xi_{1})\wedge\cdot\cdot\cdot\wedge ad^{\ast}_{x}(\xi_{i})\wedge\cdot\cdot\cdot\wedge\alpha^{\ast}_{\mathfrak{g}}(\xi_{n}).
\end{equation}

\end{definition}

\begin{definition} A \emph{matched pairs of Hom-Lie superalgebras}, which we denote by $(\mathfrak{g}, \mathfrak{g'}, \rho, \rho')$, consists of two Hom-Lie superalgebras $(\mathfrak{g}, [\cdot ,\cdot ]_{\mathfrak{g}}, \alpha_{\mathfrak{g}})$ and $(\mathfrak{g'}, [\cdot ,\cdot ]_{\mathfrak{g'}}, \alpha_{\mathfrak{g'}})$, together with representations $\rho:\mathfrak{g} \rightarrow \mathfrak{gl}(\mathfrak{g'})$ and $\rho':\mathfrak{g'} \rightarrow \mathfrak{gl}(\mathfrak{g})$ with respect to $\alpha_{\mathfrak{g'}}$ and $\alpha_{\mathfrak{g}}$ respectively, such that the compatibility conditions (\ref{seff}) and (\ref{chevro}) are satisfied.
\end{definition}

In the following, we concentrate on the case that $\mathfrak{g'}$ is $\mathfrak{g}^{\ast}$, the dual space of $\mathfrak{g}$, and $ \alpha_{\mathfrak{g'}}= \alpha^{\ast}_{\mathfrak{g}}$, $\rho=ad^{\ast}$,  $\rho'=\mathfrak{ad}^{\ast}$, where $\mathfrak{ad}^{\ast}$ is the dual map of $\mathfrak{ad}$. Notice that $ad$ and $\mathfrak{ad}$ are  the adjoint representations associated to Hom-Lie superalgebras $\mathfrak{g}$ and $\mathfrak{g}^{\ast}$ respectively.  Let $x, y, z$ be  elements in $\mathfrak{g}$ and $\xi, \eta$  elements in $\mathfrak{g}^{\ast}$. 
\\
For a Hom-Lie superalgebra $(\mathfrak{g}, [\cdot ,\cdot ]_{\mathfrak{g}}, \alpha_{\mathfrak{g}})$ (resp. $(\mathfrak{g}^{\ast}, [\cdot ,\cdot ]_{\mathfrak{g}^{\ast}}, \alpha_{\mathfrak{g}^{\ast}})$), let $\Delta^{\ast}:\mathfrak{g}^{\ast}\rightarrow\wedge^{2}\mathfrak{g}^{\ast}$ (resp. $\Delta:\mathfrak{g}\rightarrow\wedge^{2}\mathfrak{g}$) be the dual map of $[\cdot ,\cdot ]_{\mathfrak{g}}:\wedge^{2}\mathfrak{g}\rightarrow\mathfrak{g}$ (resp. $[\cdot ,\cdot ]_{\mathfrak{g}^{\ast}}:\wedge^{2}\mathfrak{g}^{\ast}\rightarrow\mathfrak{g}^{\ast}$), i.e. $$\langle\Delta^{\ast}(\xi), x\wedge y \rangle =\langle \xi, [x, y]_{\mathfrak{g}}\rangle, \ \ \ \ \  \langle\Delta(x), \xi\wedge\eta \rangle =\langle x, [\xi, \eta]_{\mathfrak{g}^{\ast}}\rangle.$$

A Hom-Lie superalgebra $(\mathfrak{L}, [\cdot ,\cdot ], \alpha)$ is called \emph{admissible} if its adjoint representation is admissible, that is $[(Id-\alpha^{2})(x), \alpha(y)]=0$. In particular, if a Hom-Lie superalgebra $(\mathfrak{L}, [\cdot ,\cdot ], \alpha)$ is \emph{ involutive}, that is $\alpha^{2}=Id$, then it is admissible.

\begin{proposition}\label{ahmedd} A pair of admissible Hom-Lie superalgebras $(\mathfrak{g}, [\cdot ,\cdot ]_{\mathfrak{g}}, \alpha_{\mathfrak{g}})$ and $(\mathfrak{g}^{\ast}, [\cdot ,\cdot ]_{\mathfrak{g}^{\ast}}, \alpha_{\mathfrak{g}}^{\ast})$  determines a Hom-Lie superbialgebra $(\mathfrak{g}, [\cdot ,\cdot ]_{\mathfrak{g}},\Delta, \alpha_{\mathfrak{g}})$, where $\Delta$ is the dual operation of $ [\cdot ,\cdot ]_{\mathfrak{g}^{\ast}}$ if 
\begin{equation}\label{sure}
\langle\Delta([x, y]_{\mathfrak{g}}), \alpha_{\mathfrak{g}}^{\ast}(\xi)\wedge\eta \rangle = \langle ad_{\alpha_{\mathfrak{g}}(x)}(\Delta(y)), \alpha_{\mathfrak{g}}^{\ast}(\xi)\wedge\eta \rangle - (-1)^{{|x|}{|y|}} \langle ad_{\alpha_{\mathfrak{g}}(y)}(\Delta(x)), \alpha_{\mathfrak{g}}^{\ast}(\xi)\wedge\eta \rangle,
\end{equation}
\begin{equation}\label{2bia}
\langle\Delta^{\ast}([\xi, \eta]_{\mathfrak{g}^{\ast}}), \alpha_{\mathfrak{g}}(x)\wedge y \rangle = \langle \mathfrak{ad}_{{\alpha_{\mathfrak{g}}^{\ast}(\xi)}}(\Delta^{\ast}(\eta)), \alpha_{\mathfrak{g}}(x)\wedge y \rangle - (-1)^{{|\xi|}{|\eta|}} \langle \mathfrak{ad}_{{\alpha_{\mathfrak{g}}^{\ast}(\eta)}}(\Delta^{\ast}(\xi)), \alpha_{\mathfrak{g}}(x)\wedge y \rangle.
\end{equation}
\end{proposition}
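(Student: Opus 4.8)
The plan is to verify the three defining properties of a Hom-Lie superbialgebra listed in Definition \ref{ae} for the quadruple $(\mathfrak{g}, [\cdot ,\cdot ]_{\mathfrak{g}}, \Delta, \alpha_{\mathfrak{g}})$. Property (1), that $(\mathfrak{g}, [\cdot ,\cdot ]_{\mathfrak{g}}, \alpha_{\mathfrak{g}})$ is a Hom-Lie superalgebra, is part of the hypothesis. For Property (2), I would observe that $\Delta$ is, by construction, the linear dual of the bracket $[\cdot ,\cdot ]_{\mathfrak{g}^{\ast}}$: since that bracket is even, skew-supersymmetric and satisfies the Hom super-Jacobi identity, and since $\alpha_{\mathfrak{g}}$ is dual to $\alpha_{\mathfrak{g}}^{\ast}$, the grading condition \eqref{2007}, the skew-supersymmetry of $\Delta$ and the Hom co-Jacobi identity \eqref{jacobi} for $(\mathfrak{g}, \Delta, \alpha_{\mathfrak{g}})$ are precisely the transposes of the corresponding identities for $(\mathfrak{g}^{\ast}, [\cdot ,\cdot ]_{\mathfrak{g}^{\ast}}, \alpha_{\mathfrak{g}}^{\ast})$. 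This is exactly the content of Remark \ref{01} applied to the finite-dimensional Hom-Lie superalgebra $(\mathfrak{g}^{\ast}, [\cdot ,\cdot ]_{\mathfrak{g}^{\ast}}, \alpha_{\mathfrak{g}}^{\ast})$, whose dual is $((\mathfrak{g}^{\ast})^{\ast}, \Delta, (\alpha_{\mathfrak{g}}^{\ast})^{\ast}) = (\mathfrak{g}, \Delta, \alpha_{\mathfrak{g}})$; co-multiplicativity $\Delta\circ\alpha_{\mathfrak{g}} = \alpha_{\mathfrak{g}}^{\otimes 2}\circ\Delta$ is likewise the transpose of the multiplicativity of $\alpha_{\mathfrak{g}}^{\ast}$ with respect to $[\cdot ,\cdot ]_{\mathfrak{g}^{\ast}}$.

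The heart of the proof is Property (3), the compatibility \eqref{a}, namely $\Delta([x,y]_{\mathfrak{g}}) = ad_{\alpha_{\mathfrak{g}}(x)}(\Delta(y)) - (-1)^{{|x|}{|y|}} ad_{\alpha_{\mathfrak{g}}(y)}(\Delta(x))$ for all $x,y\in\mathfrak{g}$. Both sides lie in $\wedge^{2}\mathfrak{g}$, so it suffices to evaluate their difference against arbitrary $\xi\wedge\eta\in\wedge^{2}\mathfrak{g}^{\ast}$. I would expand the left-hand side using $\langle\Delta(z), \xi\wedge\eta\rangle = \langle z, [\xi,\eta]_{\mathfrak{g}^{\ast}}\rangle$, and the two $ad$-terms on the right via the defining relation for the twisted coadjoint map $ad^{\ast}$ given above (with its explicit formula on wedges), which moves $\alpha_{\mathfrak{g}}^{\ast}$ onto one tensor slot and $ad^{\ast}_{\alpha_{\mathfrak{g}}(x)}$ onto the other, and then evaluate the inner $\Delta$-pairings again through $\langle\Delta(y), -\rangle = \langle y, [\cdot ,\cdot ]_{\mathfrak{g}^{\ast}}\rangle$. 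After collecting the Koszul signs, the resulting identity is exactly equation \eqref{sure}. Equation \eqref{2bia} is the mirror statement obtained by interchanging the roles of $\mathfrak{g}$ and $\mathfrak{g}^{\ast}$, of $[\cdot ,\cdot ]_{\mathfrak{g}}$ and $[\cdot ,\cdot ]_{\mathfrak{g}^{\ast}}$, and of $\Delta$ and $\Delta^{\ast}$; it is the condition making $(\mathfrak{g}^{\ast}, [\cdot ,\cdot ]_{\mathfrak{g}^{\ast}}, \Delta^{\ast}, \alpha_{\mathfrak{g}}^{\ast})$ a Hom-Lie superbialgebra as well, so the two together yield the symmetric, Manin-supertriple-type picture. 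For the single assertion of the proposition it is \eqref{sure} that enters directly, but I would record both, since they are the matched-pair conditions \eqref{seff}--\eqref{chevro} specialized to $\rho = ad^{\ast}$, $\rho' = \mathfrak{ad}^{\ast}$, so that one lands in the framework of Theorem \ref{nouveau}.

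Two points deserve care. First, the hypothesis that $\mathfrak{g}$ and $\mathfrak{g}^{\ast}$ are \emph{admissible} is exactly what guarantees that the coadjoint maps $ad^{\ast}$ and $\mathfrak{ad}^{\ast}$ are genuine representations (of $\mathfrak{g}$ on $\mathfrak{g}^{\ast}$ with respect to $\alpha_{\mathfrak{g}}^{\ast}$, and of $\mathfrak{g}^{\ast}$ on $\mathfrak{g}$ with respect to $\alpha_{\mathfrak{g}}$); this legitimizes the rewriting of the $ad$-terms above. Second, \eqref{sure} tests \eqref{a} only against the special tensors $\alpha_{\mathfrak{g}}^{\ast}(\xi)\wedge\eta$, and one must see why that is enough: this comes out of the computation itself, since once the $ad$-terms of \eqref{a} are dualized each of them naturally carries an $\alpha_{\mathfrak{g}}^{\ast}$ on one leg, while co-multiplicativity of $\Delta$ together with multiplicativity of $\alpha_{\mathfrak{g}}^{\ast}$ brings the $\Delta([x,y]_{\mathfrak{g}})$-term into the same shape. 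The main obstacle is thus bookkeeping rather than conceptual: correctly dualizing the $\alpha$-twisted adjoint action $ad_{\alpha_{\mathfrak{g}}(x)}$ on $\mathfrak{g}^{\otimes 2}$ — keeping straight the parity signs $(-1)^{{|x|}{|y_{1}|}}$ and the placement of $\alpha_{\mathfrak{g}}$ versus $\alpha_{\mathfrak{g}}^{\ast}$ — and matching the outcome, term by term, with the right-hand sides of \eqref{sure} and \eqref{2bia}.
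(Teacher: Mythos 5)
The paper states Proposition \ref{ahmedd} without any proof; the only related computation appears inside the proof of Theorem \ref{bnbn10}, where \eqref{sure} is exhibited as the dual transcription of the matched-pair identity \eqref{supeddddd}. Your overall plan --- axiom (1) of Definition \ref{ae} is a hypothesis, axiom (2) follows by dualizing the finite-dimensional Hom-Lie superalgebra $(\mathfrak{g}^{\ast},[\cdot,\cdot]_{\mathfrak{g}^{\ast}},\alpha_{\mathfrak{g}}^{\ast})$ via Remark \ref{01}, and axiom (3) is obtained by pairing \eqref{a} against $\wedge^{2}\mathfrak{g}^{\ast}$ and unwinding the dual pairings --- is the natural one and is consistent with what the authors do elsewhere. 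Your observation that \eqref{2bia} is not needed for the stated conclusion and only enters the symmetric, Manin-supertriple picture is also correct.

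The one genuine gap is your justification of why testing \eqref{a} only against the tensors $\alpha_{\mathfrak{g}}^{\ast}(\xi)\wedge\eta$ suffices. You say this ``comes out of the computation itself'' because the dualized $ad$-terms carry an $\alpha_{\mathfrak{g}}^{\ast}$ on one leg; but that describes the output of the dualization, not the family of test functionals, and it does not show that the difference of the two sides of \eqref{a} is annihilated by all of $\wedge^{2}\mathfrak{g}^{\ast}$. Condition \eqref{sure} says only that this difference vanishes on functionals of the form $\alpha_{\mathfrak{g}}^{\ast}(\xi)\wedge\eta$; if $\alpha_{\mathfrak{g}}^{\ast}$ is not surjective these do not span $\wedge^{2}\mathfrak{g}^{\ast}$ and \eqref{sure} is strictly weaker than \eqref{a}. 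The missing ingredient is the way the authors actually use admissibility: in the proof of Theorem \ref{bnbn10} they read ``admissible'' as $\alpha_{\mathfrak{g}}^{2}=\mathrm{Id}$ and $(\alpha_{\mathfrak{g}}^{\ast})^{2}=\mathrm{Id}$ (citing Lemma 2.9 of \cite{Sheng1}), so $\alpha_{\mathfrak{g}}^{\ast}$ is bijective, the tensors $\alpha_{\mathfrak{g}}^{\ast}(\xi)\wedge\eta$ do span, and your computation then closes. Relatedly, your first ``point of care'' is slightly off target for this proposition: the identity $\langle ad_{x}(y),\xi\rangle=-(-1)^{{|x|}{|y|}}\langle y, ad^{\ast}_{x}(\xi)\rangle$ used to dualize the $ad$-terms is just the definition of $ad^{\ast}$ and needs no admissibility; admissibility is what makes $ad^{\ast}$ and $\mathfrak{ad}^{\ast}$ genuine representations, which is essential for Theorems \ref{nouveau} and \ref{bnbn10} but not for verifying \eqref{a} itself.
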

\begin{remark} Following Bai and Sheng, we may denote this Hom-Lie superbialgebra by $(\mathfrak{g}, \mathfrak{g}^{\ast})$.
\end{remark}
\begin{thm}\label{bnbn10} Let  $(\mathfrak{g}, [\cdot ,\cdot ]_{\mathfrak{g}}, \alpha_{\mathfrak{g}})$ and $(\mathfrak{g}^{\ast}, [\cdot ,\cdot ]_{\mathfrak{g}^{\ast}}, \alpha_{\mathfrak{g}}^{\ast})$ be a pair of admissible Hom-Lie superalgebra. Then $(\mathfrak{g}, [\cdot ,\cdot ]_{\mathfrak{g}},\Delta, \alpha_{\mathfrak{g}})$, where $\Delta$ is the dual of $[\cdot ,\cdot ]_{\mathfrak{g}^{\ast}}$, is a Hom-Lie superbialgebra  if and only if $(\mathfrak{g}, [\cdot ,\cdot ]_{\mathfrak{g}}, \alpha_{\mathfrak{g}})$ and $(\mathfrak{g}^{\ast}, [\cdot ,\cdot ]_{\mathfrak{g}^{\ast}}, \alpha_{\mathfrak{g}}^{\ast})$ is a matched pairs of Hom-Lie superalgebras, i.e. $(\mathfrak{g}\oplus\mathfrak{g}^{\ast}, [\cdot ,\cdot ]_{\widetilde{G}}, \alpha_{\mathfrak{g}}\oplus\alpha_{\mathfrak{g}}^{\ast})$ is a multiplicative Hom-Lie superalgebra, where $[\cdot ,\cdot ]_{\widetilde{G}}$ is given by Eq. (\ref{zara}), in which $\rho=ad^{\ast}$ and $\rho'=\mathfrak{ad}^{\ast}.$
\end{thm}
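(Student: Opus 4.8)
The plan is to reduce Theorem \ref{bnbn10} to Theorem \ref{nouveau} by translating the matched-pair conditions (\ref{seff})--(\ref{chevro}) into the cocycle/compatibility conditions (\ref{sure})--(\ref{2bia}) of Proposition \ref{ahmedd}. First I would invoke Theorem \ref{nouveau} with $\mathfrak{g}'=\mathfrak{g}^{\ast}$, $\alpha_{\mathfrak{g}'}=\alpha_{\mathfrak{g}}^{\ast}$, $\rho=ad^{\ast}$, $\rho'=\mathfrak{ad}^{\ast}$: it already tells us that $(\mathfrak{g}\oplus\mathfrak{g}^{\ast},[\cdot,\cdot]_{\widetilde G},\alpha_{\mathfrak{g}}\oplus\alpha_{\mathfrak{g}}^{\ast})$ is a multiplicative Hom-Lie superalgebra precisely when $ad^{\ast}$ and $\mathfrak{ad}^{\ast}$ are representations (automatic, since they are the duals of the adjoint representations of the admissible Hom-Lie superalgebras $\mathfrak{g}$ and $\mathfrak{g}^{\ast}$) and the two compatibility identities (\ref{seff}), (\ref{chevro}) hold. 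So the content of the theorem is the claim that, in this special case, (\ref{seff}) is equivalent to the $\mathfrak{g}$-cocycle condition (\ref{sure}) and (\ref{chevro}) is equivalent to the $\mathfrak{g}^{\ast}$-cocycle condition (\ref{2bia}); then Proposition \ref{ahmedd} closes the loop, giving the Hom-Lie superbialgebra $(\mathfrak{g},[\cdot,\cdot]_{\mathfrak{g}},\Delta,\alpha_{\mathfrak{g}})$.

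Concretely, I would prove the equivalence (\ref{seff}) $\Longleftrightarrow$ (\ref{sure}) by pairing both sides of (\ref{seff}) with a pair of test vectors and using the defining adjointness relations $\langle ad^{\ast}_x(\xi),y\rangle=-(-1)^{|x||\xi|}\langle\xi,[x,y]_{\mathfrak{g}}\rangle$, $\langle\Delta(x),\xi\wedge\eta\rangle=\langle x,[\xi,\eta]_{\mathfrak{g}^{\ast}}\rangle$, $\langle\Delta^{\ast}(\xi),x\wedge y\rangle=\langle\xi,[x,y]_{\mathfrak{g}}\rangle$, together with the multiplicativity relations (\ref{57})--(\ref{058}) rewritten for $\alpha_{\mathfrak{g}}^{\ast}$. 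Take $z\in\mathfrak{g}$, $x',y'\in\mathfrak{g}^{\ast}$, pair (\ref{seff}) against an element of $\mathfrak{g}$ on the left (via $\rho(\alpha_{\mathfrak{g}}(z))$ landing in $\mathfrak{gl}(\mathfrak{g}^{\ast})$) and unwind: the term $\rho(\alpha_{\mathfrak{g}}(z))([x',y']_{\mathfrak{g}^{\ast}})$ becomes, after dualizing, a term involving $\Delta$ and $[\,\cdot\,,\,\cdot\,]_{\mathfrak{g}}$; the bracket terms $[\rho(z)(x'),\alpha_{\mathfrak{g}^{\ast}}(y')]_{\mathfrak{g}^{\ast}}$ and $[\alpha_{\mathfrak{g}^{\ast}}(x'),\rho(z)(y')]_{\mathfrak{g}^{\ast}}$ turn into the two halves of $ad_{\alpha_{\mathfrak{g}}(z)}(\Delta(\cdot))$, and the two $\rho(\rho'(\cdot)(z))(\cdot)$ terms reassemble into the skew pair $ad_{\alpha_{\mathfrak{g}}(x)}(\Delta(y))-(-1)^{|x||y|}ad_{\alpha_{\mathfrak{g}}(y)}(\Delta(x))$ after relabelling $x',y'$ as $\xi,\eta$ and using skew-supersymmetry of $\Delta$. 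The admissibility hypothesis is what guarantees $ad^{\ast}$, $\mathfrak{ad}^{\ast}$ are genuine representations (condition (\ref{2 condition})), so that Theorem \ref{nouveau} applies verbatim. By the symmetry $\mathfrak{g}\leftrightarrow\mathfrak{g}^{\ast}$ (with $[\cdot,\cdot]_{\mathfrak{g}}\leftrightarrow[\cdot,\cdot]_{\mathfrak{g}^{\ast}}$, $\Delta\leftrightarrow\Delta^{\ast}$, $ad^{\ast}\leftrightarrow\mathfrak{ad}^{\ast}$), the equivalence (\ref{chevro}) $\Longleftrightarrow$ (\ref{2bia}) follows by the same computation with the roles interchanged.

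The main obstacle I expect is purely bookkeeping: correctly tracking the Koszul signs through the chain of dualizations. Every time one moves an odd element past another, or dualizes a bracket into a cobracket, a sign $(-1)^{|x||\xi|}$ appears, and the supersymmetric-versus-skew-supersymmetric conventions for the pairings $\langle\,\cdot\,,\,\cdot\,\rangle$ and for $\wedge$ must be applied consistently; the delicate point is that the ``$(-1)^{|x||z|}$'' and ``$(-1)^{|x||y|+|y||z|}$'' factors in (\ref{seff}) are exactly those produced when the cyclic term in the Hom-super-Jacobi identity for $\widetilde G$ is dualized, and matching them against the signs in the cocycle condition (\ref{fff})--(\ref{a}) requires care. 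Once the signs are pinned down, the algebra is a direct, if lengthy, substitution, and no new idea beyond Theorem \ref{nouveau} and Proposition \ref{ahmedd} is needed.
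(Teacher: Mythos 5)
Your proposal follows the paper's own proof essentially verbatim: specialize Theorem \ref{nouveau} to $\mathfrak{g}'=\mathfrak{g}^{\ast}$, $\rho=ad^{\ast}$, $\rho'=\mathfrak{ad}^{\ast}$ (admissibility guaranteeing these are genuine representations), then dualize the two resulting compatibility identities through the pairing into the cocycle conditions of Proposition \ref{ahmedd}. The only slip is that your matching is crossed: under this specialization it is (\ref{chevro}) that dualizes to the $\mathfrak{g}$-cocycle condition (\ref{sure}) and (\ref{seff}) that dualizes to (\ref{2bia}), not the other way round --- harmless for the argument, since both equivalences are established and needed in any case.
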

\begin{proof} By Theorem \ref{nouveau}, two admissible Hom-Lie superalgebras $(\mathfrak{g}, [\cdot ,\cdot ]_{\mathfrak{g}}, \alpha_{\mathfrak{g}})$ and $(\mathfrak{g}^{\ast}, [\cdot ,\cdot ]_{\mathfrak{g}^{\ast}}, \alpha_{\mathfrak{g}}^{\ast})$ form a matched pair of Hom-Lie superalgebras if and only if
\begin{equation}\label{superrr}
ad^{\ast}_{\alpha_{\mathfrak{g}}(z)}([\xi, \eta]_{\mathfrak{g}^{\ast}})=[ad^{\ast}_{z}(\xi), \alpha_{\mathfrak{g}}^{\ast}(\eta)]_{\mathfrak{g}^{\ast}}+(-1)^{{|\xi|}{|z|}}[\alpha_{\mathfrak{g}}^{\ast}(\xi), ad^{\ast}_{z}(\eta)]_{\mathfrak{g}^{\ast}}
\end{equation}
\hspace{6cm} $+(-1)^{{{|\xi|}{|\eta|}}+{{|\eta|}{|z|}}}ad^{\ast}_{\mathfrak{ad}^{\ast}_{\eta}(z)}(\alpha_{\mathfrak{g}}^{\ast}(\xi))
-(-1)^{{|\xi|}{|z|}}ad^{\ast}_{\mathfrak{ad}^{\ast}_{\xi}(z)}(\alpha_{\mathfrak{g}}^{\ast}(\eta)).$\\
\begin{equation}\label{supeddddd}
\mathfrak{ad}^{\ast}_{\alpha_{\mathfrak{g}}^{\ast}(\xi)}([x,y]_{\mathfrak{g}})=[\mathfrak{ad}^{\ast}_{\xi}(x), \alpha_{\mathfrak{g}}(y)]_{\mathfrak{g}}+(-1)^{{|x|}{|\xi|}}[\alpha_{\mathfrak{g}}(x), \mathfrak{ad}^{\ast}_{\xi}(y)]_{\mathfrak{g}}
\end{equation}
\hspace{6cm} $+(-1)^{{{|x|}{|y|}}+{{|y|}{|\xi|}}}\mathfrak{ad}^{\ast}_{ad^{\ast}_{y}(\xi)}(\alpha_{\mathfrak{g}}(x))
-(-1)^{{|x|}{|\xi|}}\mathfrak{ad}^{\ast}_{ad^{\ast}_{x}(\xi)}(\alpha_{\mathfrak{g}}(y)).$\\
Since  $(\mathfrak{g}, [\cdot ,\cdot ]_{\mathfrak{g}}, \alpha_{\mathfrak{g}})$ and $(\mathfrak{g}^{\ast}, [\cdot ,\cdot ]_{\mathfrak{g}^{\ast}}, \alpha_{\mathfrak{g}}^{\ast})$ are admissible Hom-Lie superalgebras  i.e. ($\alpha_{\mathfrak{g}}^{2}=Id$, and $(\alpha_{\mathfrak{g}}^{\ast})^{2}=Id$),  see Lemma 2.9 in \cite{Sheng1}.
By Eq. (\ref{supeddddd}), we get
\begin{align*}
0&=\langle -\mathfrak{ad}^{\ast}_{\alpha_{\mathfrak{g}}^{\ast}(\xi)}([x,y]_{\mathfrak{g}})+[\mathfrak{ad}^{\ast}_{\xi}(x), \alpha_{\mathfrak{g}}(y)]_{\mathfrak{g}}+(-1)^{{|x|}{|\xi|}}[\alpha_{\mathfrak{g}}(x), \mathfrak{ad}^{\ast}_{\xi}(y)]_{\mathfrak{g}} +(-1)^{{{|x|}{|y|}}+{{|y|}{|\xi|}}}\mathfrak{ad}^{\ast}_{ad^{\ast}_{y}(\xi)}(\alpha_{\mathfrak{g}}(x))\\ &-(-1)^{{|x|}{|\xi|}}\mathfrak{ad}^{\ast}_{ad^{\ast}_{x}(\xi)}(\alpha_{\mathfrak{g}}(y))\textbf{,}\ \eta \rangle\\
&=(-1)^{{|\xi|}({|x|}+{|y|})}\langle [x,y]_{\mathfrak{g}}, [\alpha_{\mathfrak{g}}^{\ast}(\xi), \eta]_{\mathfrak{g}^{\ast}} \rangle-(-1)^{{|y|}({|x|}+{|\xi|})}\langle ad_{\alpha_{\mathfrak{g}}(y)}(\mathfrak{ad}^{\ast}_{\xi}(x)), \eta \rangle
+(-1)^{{|x|}{|\xi|}}\langle ad_{\alpha_{\mathfrak{g}}(x)}(\mathfrak{ad}^{\ast}_{\xi}(y)),\eta \rangle\\
&-(-1)^{{|\xi|}({|x|}+{|y|})}\langle\alpha_{\mathfrak{g}}(x), [ad^{\ast}_{y}(\xi), \eta]_{\mathfrak{g}^{\ast}} \rangle +(-1)^{{|x|}{|\xi|}}(-1)^{{|y|}({|x|}+{|\xi|})}\langle\alpha_{\mathfrak{g}}(y), [ad^{\ast}_{x}(\xi), \eta]_{\mathfrak{g}^{\ast}}\rangle\\
&=(-1)^{{|\xi|}({|x|}+{|y|})}\langle [x,y]_{\mathfrak{g}}, [\alpha_{\mathfrak{g}}^{\ast}(\xi), \eta]_{\mathfrak{g}^{\ast}} \rangle-(-1)^{{|x|}{|\xi|}}\langle x, [\xi, ad^{\ast}_{\alpha_{\mathfrak{g}}(y)}(\eta)]_{\mathfrak{g}^{\ast}}\rangle+(-1)^{{|y|}({|x|}+{|\xi|})}\langle y, [\xi, ad^{\ast}_{\alpha_{\mathfrak{g}}(x)}(\eta)]_{\mathfrak{g}^{\ast}}\rangle\\
&-(-1)^{{|\xi|}({|x|}+{|y|})}\langle x, [\alpha_{\mathfrak{g}}^{\ast}(ad^{\ast}_{y}(\xi)), \alpha_{\mathfrak{g}}^{\ast}(\eta)]_{\mathfrak{g}^{\ast}} \rangle +(-1)^{{|x|}{|\xi|}}(-1)^{{|y|}({|x|}+{|\xi|})}\langle y, [\alpha_{\mathfrak{g}}^{\ast}(ad^{\ast}_{x}(\xi)), \alpha_{\mathfrak{g}}^{\ast}(\eta)]_{\mathfrak{g}^{\ast}}\rangle,
\end{align*}
\begin{align*}
&=(-1)^{{|\xi|}({|x|}+{|y|})}\langle [x,y]_{\mathfrak{g}}, [\alpha_{\mathfrak{g}}^{\ast}(\xi), \eta]_{\mathfrak{g}^{\ast}} \rangle-(-1)^{{|x|}{|\xi|}}\langle x, [(\alpha_{\mathfrak{g}}^{\ast})^{2}(\xi), ad^{\ast}_{\alpha_{\mathfrak{g}}(y)}(\eta)]_{\mathfrak{g}^{\ast}}\rangle\\
&+(-1)^{{|y|}({|x|}+{|\xi|})}\langle y, [(\alpha_{\mathfrak{g}}^{\ast})^{2}(\xi), ad^{\ast}_{\alpha_{\mathfrak{g}}(x)}(\eta)]_{\mathfrak{g}^{\ast}}\rangle-(-1)^{{|\xi|}({|x|}+{|y|})}\langle x, [ad^{\ast}_{\alpha_{\mathfrak{g}}(y)}(\alpha_{\mathfrak{g}}^{\ast}(\xi)), \alpha_{\mathfrak{g}}^{\ast}(\eta)]_{\mathfrak{g}^{\ast}} \rangle\\ &+(-1)^{{|x|}{|\xi|}}(-1)^{{|y|}({|x|}+{|\xi|})}\langle y, [ad^{\ast}_{\alpha_{\mathfrak{g}}(x)}(\alpha_{\mathfrak{g}}^{\ast}(\xi)), \alpha_{\mathfrak{g}}^{\ast}(\eta)]_{\mathfrak{g}^{\ast}}\rangle\\
&=(-1)^{{|\xi|}({|x|}+{|y|})}\langle \Delta([x,y]_{\mathfrak{g}}), \alpha_{\mathfrak{g}}^{\ast}(\xi)\wedge\eta \rangle-(-1)^{{|x|}{|\xi|}}\langle \Delta(x), (\alpha_{\mathfrak{g}}^{\ast})^{2}(\xi)\wedge ad^{\ast}_{\alpha_{\mathfrak{g}}(y)}(\eta)\rangle\\
&+(-1)^{{|y|}({|x|}+{|\xi|})}\langle\Delta(y), (\alpha_{\mathfrak{g}}^{\ast})^{2}(\xi)\wedge ad^{\ast}_{\alpha_{\mathfrak{g}}(x)}(\eta)\rangle-(-1)^{{|\xi|}({|x|}+{|y|})}\langle \Delta(x), ad^{\ast}_{\alpha_{\mathfrak{g}}(y)}(\alpha_{\mathfrak{g}}^{\ast}(\xi))\wedge \alpha_{\mathfrak{g}}^{\ast}(\eta)\rangle\\ &+(-1)^{{|x|}{|\xi|}}(-1)^{{|y|}({|x|}+{|\xi|})}\langle\Delta(y), ad^{\ast}_{\alpha_{\mathfrak{g}}(x)}(\alpha_{\mathfrak{g}}^{\ast}(\xi))\wedge \alpha_{\mathfrak{g}}^{\ast}(\eta)\rangle,
\end{align*}
which implies that
\begin{align*}
\langle\Delta([x,y]_{\mathfrak{g}}), \alpha_{\mathfrak{g}}^{\ast}(\xi)\wedge\eta \rangle&=-(-1)^{{|x|}{|y|}}\langle \Delta(y), ad^{\ast}_{\alpha_{\mathfrak{g}}(x)}(\alpha_{\mathfrak{g}}^{\ast}(\xi))\wedge \alpha_{\mathfrak{g}}^{\ast}(\eta)+(-1)^{{|x|}{|\xi|}}(\alpha_{\mathfrak{g}}^{\ast})^{2}(\xi)\wedge ad^{\ast}_{\alpha_{\mathfrak{g}}(x)}(\eta)\rangle\\
&+\langle\Delta(x),ad^{\ast}_{\alpha_{\mathfrak{g}}(y)}(\alpha_{\mathfrak{g}}^{\ast}(\xi))\wedge \alpha_{\mathfrak{g}}^{\ast}(\eta)+(-1)^{{|y|}{|\xi|}}(\alpha_{\mathfrak{g}}^{\ast})^{2}(\xi)\wedge ad^{\ast}_{\alpha_{\mathfrak{g}}(y)}(\eta)\rangle\\
&=-(-1)^{{|x|}{|y|}}\langle\Delta(y), ad^{\ast}_{\alpha_{\mathfrak{g}}(x)}(\alpha_{\mathfrak{g}}^{\ast}(\xi)\wedge \eta)\rangle+\langle\Delta(x), ad^{\ast}_{\alpha_{\mathfrak{g}}(y)}(\alpha_{\mathfrak{g}}^{\ast}(\xi)\wedge \eta)\rangle\\
&=\langle ad_{\alpha_{\mathfrak{g}}(x)}(\Delta(y)), \alpha_{\mathfrak{g}}^{\ast}(\xi)\wedge\eta \rangle - (-1)^{{|x|}{|y|}} \langle ad_{\alpha_{\mathfrak{g}}(y)}(\Delta(x)),  \alpha_{\mathfrak{g}}^{\ast}(\xi)\wedge\eta\rangle ,
\end{align*}
which is exactly Eq. (\ref{sure}). Similarly, one deduces that Eq. (\ref{superrr}) is equivalent to Eq.  (\ref{2bia}).
\end{proof}

 Let $V$ be a superspace and $\langle\ , \ \rangle:V^{\ast}\times V\rightarrow \mathbb{K}$ be the canonical pairing. Then we identify $V$ with $V^{\ast}$ by the pairing $\langle x, \xi \rangle=(-1)^{{|x|}{|\xi|}}\langle\xi, x \rangle$, $x\in V$ and $\xi\in V^{\ast}$. On the other hand, we shall say that a bilinear form $(|):V\times V\rightarrow \mathbb{K}$ is supersymmetric if $(\upsilon|\omega)=(-1)^{{|\upsilon|}{|\omega|}}(\omega|\upsilon)$.

\begin{definition} A \emph{Manin supertriple of Hom-Lie superalgebras} is a triple of Hom-Lie superalgebras $(\mathfrak{M}, \mathfrak{g}, \mathfrak{g'})$ together with a nondegenerate supersymmetric  bilinear form $S$ on $\mathfrak{M}$ such that
\begin{enumerate}
\item  $S$ is invariant, i.e. for any $x, y, z \in\mathfrak{M}$, we have
\begin{equation}\label{manin 1}
S([x, y]_{\mathfrak{M}}, z)=S(x, [y, z]_{\mathfrak{M}}),
\end{equation}
\begin{equation}\label{manin 02}
S(\phi_{\mathfrak{M}}(x), y)=S(x, \phi_{\mathfrak{M}}(y)).
\end{equation}
\item $ \mathfrak{g}$ and $\mathfrak{g'}$ are isotropic Hom-Lie sub-superalgebra of $\mathfrak{M}$, such that $\mathfrak{M}=\mathfrak{g}\oplus\mathfrak{g'}$ as vector superspace.
\end{enumerate}
\end{definition}
\begin{proposition}\label{russe} Let $(\mathfrak{g}, \mathfrak{g}^{\ast})$ be a Hom-Lie superbialgebra in the sense of Proposition \ref{ahmedd}. Then $(\mathfrak{g}\oplus\mathfrak{g}^{\ast}, \mathfrak{g}, \mathfrak{g}^{\ast})$ is a Manin supertriple of Hom-Lie superalgebras.
\end{proposition}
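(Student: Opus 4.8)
The plan is to realize $\mathfrak{M}=\mathfrak{g}\oplus\mathfrak{g}^{\ast}$ as a Manin double. By Theorem \ref{bnbn10}, the hypothesis that $(\mathfrak{g},\mathfrak{g}^{\ast})$ is a Hom-Lie superbialgebra in the sense of Proposition \ref{ahmedd} is equivalent to $(\mathfrak{g},[\cdot ,\cdot ]_{\mathfrak{g}},\alpha_{\mathfrak{g}})$ and $(\mathfrak{g}^{\ast},[\cdot ,\cdot ]_{\mathfrak{g}^{\ast}},\alpha_{\mathfrak{g}}^{\ast})$ forming a matched pair, and hence to $(\mathfrak{g}\oplus\mathfrak{g}^{\ast},[\cdot ,\cdot ]_{\widetilde{G}},\phi_{\mathfrak{M}}:=\alpha_{\mathfrak{g}}\oplus\alpha_{\mathfrak{g}}^{\ast})$ being a multiplicative Hom-Lie superalgebra, with $[\cdot ,\cdot ]_{\widetilde{G}}$ given by \eqref{zara} for $\rho=ad^{\ast}$ and $\rho'=\mathfrak{ad}^{\ast}$. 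So the only work left is to exhibit the bilinear form and verify the Manin axioms. I would take
$$S\big((x,\xi),(y,\eta)\big)=\langle \xi,y\rangle+\langle x,\eta\rangle,\qquad x,y\in\mathfrak{g},\ \xi,\eta\in\mathfrak{g}^{\ast},$$
with $\langle x,\eta\rangle$ read through the identification $\langle x,\eta\rangle=(-1)^{|x||\eta|}\langle\eta,x\rangle$ introduced just before the definition of a Manin supertriple. Nondegeneracy of $S$ is immediate from nondegeneracy of the canonical pairing of $\mathfrak{g}$ with $\mathfrak{g}^{\ast}$; for supersymmetry one notes that $S$ is even, so that in any nonvanishing term the four parities involved coincide, and the sign $(-1)^{|(x,\xi)||(y,\eta)|}$ demanded by supersymmetry then cancels exactly the sign in the above identification.

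Before the bracket, I would clear the structural assertions. Evaluating \eqref{zara} with $\rho=ad^{\ast}$, $\rho'=\mathfrak{ad}^{\ast}$ on elements of the form $(x,0)$ and $(0,\xi)$ gives $[(x,0),(y,0)]_{\widetilde{G}}=([x,y]_{\mathfrak{g}},0)$ and $[(0,\xi),(0,\eta)]_{\widetilde{G}}=(0,[\xi,\eta]_{\mathfrak{g}^{\ast}})$, while $\phi_{\mathfrak{M}}$ manifestly preserves each summand; hence $\mathfrak{g}\cong\mathfrak{g}\oplus 0$ and $\mathfrak{g}^{\ast}\cong 0\oplus\mathfrak{g}^{\ast}$ are Hom-Lie sub-superalgebras of $\mathfrak{M}$ with $\mathfrak{M}=\mathfrak{g}\oplus\mathfrak{g}^{\ast}$ as superspaces, and they are isotropic because $S$ vanishes identically on each by construction. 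The $\phi_{\mathfrak{M}}$-invariance \eqref{manin 02} reduces, after expanding $S$, to $\langle\alpha_{\mathfrak{g}}^{\ast}(\xi),y\rangle=\langle\xi,\alpha_{\mathfrak{g}}(y)\rangle$ together with the evenness of $\alpha_{\mathfrak{g}}$, both of which hold by definition of the transpose.

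The substantive step is the invariance \eqref{manin 1}, $S([u,v]_{\widetilde{G}},w)=S(u,[v,w]_{\widetilde{G}})$ for $u,v,w\in\mathfrak{M}$. By trilinearity I would reduce to the eight cases in which each of $u,v,w$ lies in $\mathfrak{g}$ or in $\mathfrak{g}^{\ast}$. The two homogeneous cases (all three in $\mathfrak{g}$, or all three in $\mathfrak{g}^{\ast}$) are trivial, since each of $\mathfrak{g},\mathfrak{g}^{\ast}$ is an isotropic subalgebra and the bracket of such elements stays inside it, so both sides vanish. The two remaining families — two arguments in $\mathfrak{g}$ and one in $\mathfrak{g}^{\ast}$, or one in $\mathfrak{g}$ and two in $\mathfrak{g}^{\ast}$ — are interchanged by the symmetry $\mathfrak{g}\leftrightarrow\mathfrak{g}^{\ast}$, so it suffices to treat one of them for all three cyclic placements of the lone argument. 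Expanding $[\cdot ,\cdot ]_{\widetilde{G}}$ via \eqref{zara} and pairing, each of these identities unwinds, through the defining relations of $ad^{\ast}$ and $\mathfrak{ad}^{\ast}$ as duals of the respective adjoint representations, into either a tautology or exactly the compatibility \eqref{sure}--\eqref{2bia} of Proposition \ref{ahmedd} (equivalently \eqref{superrr}--\eqref{supeddddd} in matched-pair form). I expect the only real difficulty to be the Koszul-sign bookkeeping: one must carefully track the signs coming from the super-twist, from the symmetrization of $S$, and from the conventions for $ad^{\ast}$ and $\mathfrak{ad}^{\ast}$, and check that they cancel so the mixed contributions to the two sides match. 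Once this is checked, all axioms of a Manin supertriple hold, which proves the claim.
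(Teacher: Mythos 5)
Your proposal is correct and follows essentially the same route as the paper: invoke Theorem \ref{bnbn10} to get the Hom-Lie superalgebra structure on $\mathfrak{g}\oplus\mathfrak{g}^{\ast}$ with bracket \eqref{omi}, equip it with the canonical pairing $S(x+\xi,y+\eta)=\langle x,\eta\rangle+\langle\xi,y\rangle$, and verify isotropy, supersymmetry and the invariance conditions \eqref{manin 1}--\eqref{manin 02} from the defining duality of $ad^{\ast}$ and $\mathfrak{ad}^{\ast}$. You merely spell out the case analysis that the paper compresses into "it's straightforward" (and in fact every mixed case of \eqref{manin 1} is a tautology of the duality relations alone, the compatibility conditions being needed only for the Jacobi identity already handled by Theorem \ref{bnbn10}).
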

\begin{proof} Let $(\mathfrak{g}, \mathfrak{g}^{\ast})$ be a Hom-Lie superbialgebra in the sense of Proposition \ref{ahmedd}, i.e. $\mathfrak{g}$ and $\mathfrak{g}^{\ast}$ are admissible Hom-Lie superalgebras such that Eqs. (\ref{sure}), (\ref{2bia}) are satisfied. By Theorem \ref{bnbn10}, we know that $(\mathfrak{g}\oplus\mathfrak{g}^{\ast}, [\cdot ,\cdot ]_{\mathfrak{g}\oplus\mathfrak{g}^{\ast}}, \alpha_{\mathfrak{g}}\oplus\alpha_{\mathfrak{g}}^{\ast})$ is a Hom-Lie superalgebra, where $[-, -]_{\mathfrak{g}\oplus\mathfrak{g}^{\ast}}$ is given by
\begin{equation}\label{omi}
[x+\xi, y+\eta]_{\mathfrak{g}\oplus\mathfrak{g}^{\ast}}=[x, y]_{\mathfrak{g}}+[\xi, \eta]_{\mathfrak{g}^{\ast}}+ad^{\ast}_{x}(\eta)-(-1)^{{|x|}{|y|}}ad^{\ast}_{y}(\xi)+
\mathfrak{ad}^{\ast}_{\xi}(y)-(-1)^{{|x|}{|y|}}\mathfrak{ad}^{\ast}_{\eta}(x),
\end{equation}
\ \ for all homogeneous elements $x, y$ and $\xi, \eta$ in $\mathfrak{g}$ and $\mathfrak{g}^{\ast}$ respectively.
From the construction above we have $|x|=|\xi|$ and $|y|=|\eta|$.\\
Furthermore, there is an obvious supersymmetric bilinear form on $\mathfrak{g}\oplus\mathfrak{g}^{\ast}$:\\
\begin{equation}\label{pose}
S(x+\xi, y+\eta)=\langle x, \eta\rangle+\langle \xi, y\rangle=\langle x, \eta\rangle+(-1)^{{|\xi|}{|y|}}\langle y, \xi\rangle.
\end{equation}
\ \ It's straightforward, using the   supersymmetry and $\langle ad_{x}(y), \xi \rangle=-(-1)^{{|x|}{|y|}}\langle y, ad^{\ast}_{x}(\xi)\rangle$, with $|x|=|\xi|$ and $|y|=|\eta|$, that  Eqs. (\ref{manin 1}) and (\ref{manin 02}) are satisfied, i.e. the bilinear form defined by Eq. (\ref{pose}) is invariant.
\end{proof}
Conversely, if $(\mathfrak{g}\oplus{\mathfrak{g}}^{\ast}, \mathfrak{g}, {\mathfrak{g}}^{\ast})$ is a Manin supertriple of Hom-Lie superalgebras with the invariant bilinear from $S$ given by Eq. (\ref{pose}), then for any $x, y\in\mathfrak{g}$ and $\xi, \eta\in{\mathfrak{g}}^{\ast}$, we have  the natural scalar product on $\mathfrak{g}\oplus{\mathfrak{g}}^{\ast}$  defined by $$S(x, y)=0, \ \ \ S(\xi, \eta)=0, \ \ \ S(x, \xi)=\langle x, \xi \rangle, \ \ \ \ x, y\in\mathfrak{g}, \  \xi, \eta\in{\mathfrak{g}}^{\ast}.$$ Due to the invariance of $S$, we have
\begin{align*}
S([x, \xi]_{\mathfrak{g}\oplus{\mathfrak{g}}^{\ast}}, y)&=(-1)^{{|y|}({|x|}+{|\xi|})}S(y,[x, \xi]_{\mathfrak{g}\oplus{\mathfrak{g}}^{\ast}})=(-1)^{{|y|}({|x|}+{|\xi|})}S([y, x]_{\mathfrak{g}\oplus{\mathfrak{g}}^{\ast}}, \xi)=(-1)^{{|y|}({|x|}+{|\xi|})}S([y, x]_{\mathfrak{g}},\xi)\\
&=-(-1)^{{|y|}{|\xi|}}\langle ad_{x}(y), \xi\rangle=(-1)^{{|y|}({|x|}+{|\xi|})}\langle y, ad^{\ast}_{x}(\xi)\rangle=\langle ad^{\ast}_{x}(\xi), y\rangle,
\end{align*}
$S([x, \xi]_{\mathfrak{g}\oplus{\mathfrak{g}}^{\ast}}, \eta)=S(x, [\xi, \eta]_{\mathfrak{g}\oplus{\mathfrak{g}}^{\ast}})=S(x, [\xi, \eta]_{{\mathfrak{g}}^{\ast}})=\langle x, \mathfrak{ad}_{\xi}(\eta)\rangle=-(-1)^{{|x|}{|\xi|}}\langle\mathfrak{ad}^{\ast}_{\xi}(x),
\eta\rangle,$\\
which implies that : $$[x, \xi]_{\mathfrak{g}\oplus{\mathfrak{g}}^{\ast}}=ad^{\ast}_{x}(\xi)-(-1)^{{|x|}{|\xi|}}\mathfrak{ad}^{\ast}_{\xi}(x),$$
that is, the Hom-Lie bracket on $\mathfrak{g}\oplus{\mathfrak{g}}^{\ast}$ is given by Eq. (\ref{omi}). Therefore, $(\mathfrak{g}, {\mathfrak{g}}^{\ast}, ad^{\ast}, \mathfrak{ad}^{\ast})$ is a matched pair of Hom-Lie superalgebras and hence $(\mathfrak{g}, {\mathfrak{g}}^{\ast})$ is a Hom-Lie superbialgebra. Note that we deduce naturally that both $\mathfrak{g}$ and $\mathfrak{g}^{\ast}$ are admissible  Hom-Lie superalgebras.\\
Summarizing the above study, Theorem \ref{bnbn10} and Proposition \ref{russe}, we have the following conclusion.

\begin{thm} Let $(\mathfrak{g}, [\cdot ,\cdot ]_{\mathfrak{g}}, \alpha_{\mathfrak{g}})$ and $(\mathfrak{g}^{\ast}, [\cdot ,\cdot ]_{\mathfrak{g}^{\ast}}, \alpha_{\mathfrak{g}}^{\ast})$ be two admissible Hom-Lie superalgebras. Then the following conditions are equivalent.
\begin{enumerate}
\item $(\mathfrak{g}, {\mathfrak{g}}^{\ast})$ is a Hom-Lie superbialgebra in the sense of Proposition \ref{ahmedd}.
\item $(\mathfrak{g}, {\mathfrak{g}}^{\ast}, ad^{\ast}, \mathfrak{ad}^{\ast})$ is a matched pair of Hom-Lie superalgebras.
\item $(\mathfrak{g}\oplus{\mathfrak{g}}^{\ast}, \mathfrak{g}, {\mathfrak{g}}^{\ast})$ is a Manin supertriple of Hom-Lie superalgebras with the invariant bilinear from (\ref{pose}).
\end{enumerate}

\end{thm}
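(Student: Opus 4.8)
The plan is to prove the theorem by closing the cycle of implications $(1)\Rightarrow(2)\Rightarrow(3)\Rightarrow(1)$, invoking the three preceding results so that essentially no new computation is required. The equivalence $(1)\Leftrightarrow(2)$ is already Theorem \ref{bnbn10}: for a pair of admissible Hom-Lie superalgebras, the compatibility conditions (\ref{sure})--(\ref{2bia}) that define the bialgebra $(\mathfrak{g},\mathfrak{g}^{\ast})$ in the sense of Proposition \ref{ahmedd} hold if and only if $(\mathfrak{g},\mathfrak{g}^{\ast},ad^{\ast},\mathfrak{ad}^{\ast})$ is a matched pair, equivalently $(\mathfrak{g}\oplus\mathfrak{g}^{\ast},[\cdot,\cdot]_{\widetilde{G}},\alpha_{\mathfrak{g}}\oplus\alpha_{\mathfrak{g}}^{\ast})$ is a multiplicative Hom-Lie superalgebra with bracket (\ref{zara}) specialised to $\rho=ad^{\ast}$, $\rho'=\mathfrak{ad}^{\ast}$. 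So for this part I would simply cite Theorem \ref{bnbn10}.

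For $(1)\Rightarrow(3)$ I would cite Proposition \ref{russe}: starting from the bialgebra $(\mathfrak{g},\mathfrak{g}^{\ast})$, the bracket (\ref{omi}) on $\mathfrak{g}\oplus\mathfrak{g}^{\ast}$ together with the canonical pairing form $S$ of (\ref{pose}) makes $(\mathfrak{g}\oplus\mathfrak{g}^{\ast},\mathfrak{g},\mathfrak{g}^{\ast})$ a Manin supertriple, since $\mathfrak{g}$ and $\mathfrak{g}^{\ast}$ are isotropic and complementary and the invariance identities (\ref{manin 1})--(\ref{manin 02}) were checked there using $\langle ad_{x}(y),\xi\rangle=-(-1)^{|x||y|}\langle y,ad^{\ast}_{x}(\xi)\rangle$ and the supersymmetry of $S$.

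The one implication that carries genuine content is $(3)\Rightarrow(1)$, and this is precisely the argument displayed immediately after Proposition \ref{russe}, which I would write out carefully. Given an abstract Manin supertriple with form (\ref{pose}), invariance of $S$ pins down the mixed bracket: pairing $[x,\xi]_{\mathfrak{g}\oplus\mathfrak{g}^{\ast}}$ against $y\in\mathfrak{g}$ isolates its $\mathfrak{g}^{\ast}$-component as $ad^{\ast}_{x}(\xi)$, while pairing against $\eta\in\mathfrak{g}^{\ast}$ isolates its $\mathfrak{g}$-component as $-(-1)^{|x||\xi|}\mathfrak{ad}^{\ast}_{\xi}(x)$, so that $[x,\xi]_{\mathfrak{g}\oplus\mathfrak{g}^{\ast}}=ad^{\ast}_{x}(\xi)-(-1)^{|x||\xi|}\mathfrak{ad}^{\ast}_{\xi}(x)$; combined with the sub-superalgebra brackets this is exactly (\ref{omi}). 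Hence $(\mathfrak{g},\mathfrak{g}^{\ast},ad^{\ast},\mathfrak{ad}^{\ast})$ is a matched pair, so Theorem \ref{bnbn10} gives that $(\mathfrak{g},\mathfrak{g}^{\ast})$ is a Hom-Lie superbialgebra in the sense of Proposition \ref{ahmedd}, and the admissibility (indeed involutivity) of $\mathfrak{g}$ and $\mathfrak{g}^{\ast}$ is read off from the structure as remarked there (cf. Lemma 2.9 of \cite{Sheng1}). Chaining $(1)\Rightarrow(3)\Rightarrow(1)$ with $(1)\Leftrightarrow(2)$ yields the full equivalence.

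The main obstacle is the sign bookkeeping in $(3)\Rightarrow(1)$: one must track the Koszul signs in $S([x,y]_{\mathfrak{M}},z)=S(x,[y,z]_{\mathfrak{M}})$, in $S(\phi_{\mathfrak{M}}(x),y)=S(x,\phi_{\mathfrak{M}}(y))$, and in $\langle ad_{x}(y),\xi\rangle=-(-1)^{|x||y|}\langle y,ad^{\ast}_{x}(\xi)\rangle$, using the identifications $|x|=|\xi|$ and $|y|=|\eta|$ coming from the construction, so that the mixed bracket emerges with exactly the sign in (\ref{omi}) and no spurious terms remain. Everything else is a formal consequence of the cited statements.
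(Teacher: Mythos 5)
Your proposal is correct and follows essentially the same route as the paper, which proves this theorem by ``summarizing the above study'': Theorem \ref{bnbn10} gives $(1)\Leftrightarrow(2)$, Proposition \ref{russe} gives $(1)\Rightarrow(3)$, and the displayed computation immediately after Proposition \ref{russe} (recovering the mixed bracket \eqref{omi} from the invariance of $S$) gives $(3)\Rightarrow(1)$. No discrepancy to report.
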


\section{Coboundary and quasi-triangular Hom-Lie superbialgebras}
In this section, we define and study coboundary Hom-Lie superbialgebras and quasi-triangular Hom-Lie superbialgebras. Then we show how a coboundary or a quasi-triangular Hom-Lie superbialgebra can be constructed from a Hom-Lie superalgebra and an  $r$-matrix.
\begin{definition} A \emph{(multiplicative) coboundary Hom-Lie superbialgebra}  $(\mathfrak{L}, [\cdot ,\cdot ], \Delta, \alpha, r)$ consists of 
 a (multiplicative) Hom-Lie superbialgebra $(\mathfrak{L}, [\cdot ,\cdot ], \Delta, \alpha)$ and an element $r=\sum r_{1}\otimes r_{2}\in\mathfrak{L}^{\otimes2}$
such that $\alpha^{\otimes2}(r)=r$
and
\begin{equation}\label{888}
\Delta(x)=ad_{x}(r)=\sum [x, r_{1}]\otimes\alpha(r_{2})+(-1)^{{|x|}{|r_{1}|}}\alpha(r_{1})\otimes[x, r_{2}]
\end{equation}
for all $x\in\mathfrak{L}$.
\end{definition}
\begin{remark}
\begin{equation}\label{777}
\Delta(x)=(-1)^{{|x|}{|r|}}(\sum [x, r_{1}]\otimes\alpha(r_{2})+(-1)^{{|x|}{| r_{1}|}}\alpha(r_{1})\otimes[x, r_{2}])
\end{equation}
for $x\in\mathfrak{L}$, where the parity $|r|$ of $r$ is defined as follows : since we assume $r$ is homogenous, there exists $|r|\in\mathbb{Z}_{2}$, such that $r$ can be written as $r=\sum r_{1}\otimes r_{2}\in\mathfrak{L}^{\otimes2}$, $r_{1}, r_{2}$ are homogenous elements with $|r|=|r_{1}|+|r_{2}|$. (Note that equation (\ref{777}) and (\ref{2007}) show that we  have $|r|=\bar{0}$,  namely $|r_{1}|=|r_{2}|$). So we get (\ref{888}).

\end{remark}
\begin{definition}
The classical Yang-Baxter equation (CYBE):\\
$$c(r)=[r_{12}, r_{13}]+[r_{12}, r_{23}]+[r_{13}, r_{23}]=0,$$
where $r_{ij}$ are defined by\begin{eqnarray}\label{10}
  &&
  r_{12}=\sum r_{1}\otimes r_{2}\otimes1=r\otimes1,
   \\ \nonumber
  &&
 r_{13}=\sum r_{1}\otimes1 \otimes r_{2}=(1\otimes\tau)(r\otimes1)=(\tau\otimes1)(1\otimes r),\\ \nonumber
  &&
  r_{23}=\sum 1\otimes r_{1}\otimes r_{2}=1\otimes r
 \end{eqnarray}



and considered as elements in  ${U}(\mathfrak{L})$, the universal enveloping algebra of a Lie superalgebra $\mathfrak{L}$. 
 Elements \eqref{10} belongs to $\mathfrak{L}\otimes\mathfrak{L}\otimes\mathfrak{L}$.\\
 \end{definition}
 \begin{definition}
 The classical Hom-Yang-Baxter equation (CHYBE) in a Hom-Lie superalgebra $(\mathfrak{L}, [\cdot ,\cdot ], \alpha)$ is
 \begin{equation}\label{1000}
[[r,r]]^{\alpha}=[r_{12}, r_{13}]+[r_{12}, r_{23}]+[r_{13}, r_{23}]=0
\end{equation}
for $r\in\mathfrak{L}\otimes\mathfrak{L}$. The three brackets in (\ref{1000}) are defined as \\
\begin{eqnarray}\label{cvc}
  &&
   [r_{12},r'_{13}]=\sum (-1)^{{| r'_{1}|}{|r_{2}|}}[r_{1}, r'_{1}]\otimes \alpha(r_{2})\otimes \alpha(r'_{2}),
   \\ \nonumber
  &&
 [r_{12},r'_{23}]=\sum \alpha(r_{1})\otimes [r_{2},r'_{1}]\otimes \alpha(r'_{2}),   \\ \nonumber
  &&
  [r_{13},r'_{23}]=\sum (-1)^{{|r'_{1}|}{| r_{2}|}} \alpha(r_{1}) \otimes \alpha(r'_{1}) \otimes [r_{2}, r'_{2}],
 \end{eqnarray}
 where $r=\sum r_{1}\otimes r_{2}$ and $r'=\sum r'_{1}\otimes r'_{2}$ $\in\mathfrak{L}\otimes\mathfrak{L}$. If $\alpha=Id$, then the (CHYBE) reduces to the (CYBE).
\end{definition}

\begin{definition} A \emph{(multiplicative) quasi-triangular Hom-Lie superbialgebra} is a (multiplicative) coboundary Hom-Lie superbialgebra in which $r$ is a solution of the CHYBE (\ref{1000}). In these cases, we also write $\Delta$ as $ad(r)$.
\end{definition}

\begin{remark}Note that we do not require $r$ to be skew-supersymmetric in a coboundary Hom-Lie superbialgebra, whereas in \cite{Drinfel'd V.G.4} $r$ is assumed to be skew-supersymmetric in a coboundary Lie bialgebra. We follow the  convention in \cite{Majid} and \cite{Yau2}.
\end{remark}
\begin{remark}\label{masr} Condition \eqref{888} is a natural because from Remark \ref{2000}  the compatibility condition (\ref{a}) in Hom-Lie superbialgebra $\mathfrak{L}$ says that the cobracket $\Delta$ is a 1-cocycle in $ C^1(\mathfrak{L},\mathfrak{L}^{\otimes2})$, where $\mathfrak{L}$ acts on $\mathfrak{L}^{\otimes2}$ via the $\alpha$-twisted adjoint action (\ref{2001}). The simplest 1-cocycles are the 1-coboundaries, i.e, images of $\delta^{0}_{HL}$. We can define the Hom-Lie 0-cochains and 0 th differential as follows, extending the definition in \cite{MakhloufSilvestrov3}. Set $C^0(\mathfrak{L},\mathfrak{L}^{\otimes2})$ as the subspace of $\mathfrak{L}^{\otimes2}$ consisting of elements that are fixed by $\alpha^{\otimes2}$. Then we define the differential
$$\delta^{0}_{HL}:C^0(\mathfrak{L},\mathfrak{L}^{\otimes2})\rightarrow C^1(\mathfrak{L},\mathfrak{L}^{\otimes2})$$
by setting
$\delta^{0}_{HL}(r)=ad(r),$
as in (\ref{action}). It is not hard to check that, for $r\in C^0(\mathfrak{L},\mathfrak{L}^{\otimes2})$, we have $\delta^{1}_{HL}(\delta^{0}_{HL}(r))=0,$
where $\delta^{1}_{HL}$ is defined in (\ref{fff}). In fact, what this condition says is that
\begin{eqnarray}\label{rrrr}
  &&
   0=\delta^{1}_{HL}(\delta^{0}_{HL}(r))(x, y)
   \\ \nonumber
  &&
  \ \ = \delta^{1}_{HL}(ad(r))(x, y)  
    =ad_{[x, y]}(r)-ad_{\alpha(x)}(ad_{y}(r))+(-1)^{{|x|}{|y|}}ad_{\alpha(y)}(ad_{x}(r))
 \end{eqnarray}
for all $x, y\in\mathfrak{L}$. We will prove (\ref{rrrr}) in Lemma \ref{compatiblite} below. Thus, such an element $\delta^{0}_{HL}(r)=ad(r)$ is a 1-coboundary, and hence a 1-cocycle. This fact makes $ad(r)$ (with $\alpha^{\otimes2}(r)=r$) a natural candidate for the cobracket in a Hom-Lie superbialgebra and also justifies the name coboundary Hom-Lie superbialgebra.
\end{remark}
The following result is the analogue of Theorem \ref{aaa} for coboundary or quasi-triangular Hom-Lie superbialgebras. It says that coboundary or quasi-triangular Hom-Lie superbialgebras deform into other coboundary or quasi-triangular Hom-Lie superbialgebras via suitable endomorphisms.
\begin{thm}\label{zz} Let $(\mathfrak{L}, [\cdot ,\cdot ], \Delta=ad(r), \alpha, r)$ be a coboundary Hom-Lie superbialgebra and an even map $\beta:\mathfrak{L}\rightarrow \mathfrak{L}$ be a morphism such that $\beta^{\otimes2}(r)=r$. Then
$\mathfrak{L}_{\beta}=(\mathfrak{L},  [\cdot ,\cdot ]_{\beta}=\beta\circ[\cdot ,\cdot ], \Delta_{\beta}=\Delta\circ\beta,\beta\alpha, r)$
is also a coboundary Hom-Lie superbialgebra, which is multiplicative if $\mathfrak{L}$ is. Moreover, if $\mathfrak{L}$ is quasi-triangular, then so is $\mathfrak{L}_{\beta}$.
\end{thm}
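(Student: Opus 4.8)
The plan is to reduce everything to Theorem~\ref{aaa} plus two short computations. Since $\beta$ is a morphism, Theorem~\ref{aaa} already shows that $\mathfrak{L}_{\beta}=(\mathfrak{L},[\cdot,\cdot]_{\beta},\Delta_{\beta},\beta\alpha)$ is a Hom-Lie superbialgebra, multiplicative whenever $\mathfrak{L}$ is. So the only new content is: (i) that $r$ is a legitimate coboundary datum for $\mathfrak{L}_{\beta}$, that is, $(\beta\alpha)^{\otimes2}(r)=r$ and $\Delta_{\beta}=ad^{\beta}(r)$, where $ad^{\beta}$ denotes the adjoint operator built from $[\cdot,\cdot]_{\beta}$ and $\beta\alpha$; and (ii) that $r$ still solves the CHYBE for $\mathfrak{L}_{\beta}$ once it solves it for $\mathfrak{L}$.

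For (i), the invariance is immediate: $(\beta\alpha)^{\otimes2}(r)=\beta^{\otimes2}(\alpha^{\otimes2}(r))=\beta^{\otimes2}(r)=r$, using $\alpha^{\otimes2}(r)=r$ from the coboundary hypothesis on $\mathfrak{L}$ and $\beta^{\otimes2}(r)=r$ from the hypothesis on $\beta$. For the coboundary identity, the key remark is that because $\beta$ commutes with $\alpha$ and with the bracket, the adjoint map of $\mathfrak{L}_{\beta}$ factors through $\beta$: directly from~(\ref{action}) one gets $ad^{\beta}_{x}=ad_{\beta(x)}\circ\beta^{\otimes2}$ on $\mathfrak{L}\otimes\mathfrak{L}$ (here one uses $|\beta(u)|=|u|$, valid since $\beta$ is even). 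Evaluating at $r$ and using $\beta^{\otimes2}(r)=r$ gives $ad^{\beta}_{x}(r)=ad_{\beta(x)}(\beta^{\otimes2}(r))=ad_{\beta(x)}(r)=\Delta(\beta(x))=\Delta_{\beta}(x)$, where the third equality is the coboundary condition in $\mathfrak{L}$. Hence $\mathfrak{L}_{\beta}$ is coboundary with the same $r$.

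For (ii), I would show that passing from $\mathfrak{L}$ to $\mathfrak{L}_{\beta}$ conjugates each of the three bracket operations in~(\ref{cvc}) by $\beta$. Replacing $\alpha$ by $\beta\alpha$ and $[\cdot,\cdot]$ by $\beta\circ[\cdot,\cdot]$ in~(\ref{cvc}) and using that $\beta$ is a morphism, one checks termwise that $[r_{12},r_{13}]_{\beta}=\beta^{\otimes3}([r_{12},r_{13}])$, and likewise for $[r_{12},r_{23}]$ and $[r_{13},r_{23}]$; summing the three identities gives $[[r,r]]^{\beta\alpha}=\beta^{\otimes3}\bigl([[r,r]]^{\alpha}\bigr)=\beta^{\otimes3}(0)=0$, so $r$ solves the CHYBE in $\mathfrak{L}_{\beta}$ and $\mathfrak{L}_{\beta}$ is quasi-triangular. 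The only place that calls for care is tracking the Koszul signs $(-1)^{|r_{1}'||r_{2}|}$ in~(\ref{cvc}) while moving $\beta$ across the tensor factors, but since $\beta$ is even these signs are unchanged, so this is routine bookkeeping rather than a genuine obstacle: the whole argument rests on the two elementary identities relating the adjoint and three-bracket operations of $\mathfrak{L}_{\beta}$ to those of $\mathfrak{L}$ via $\beta$, combined with the fixed-point property $\beta^{\otimes2}(r)=r$.
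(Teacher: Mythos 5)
Your proposal is correct and follows essentially the same route as the paper: reduce the bialgebra axioms to Theorem~\ref{aaa}, verify $(\beta\alpha)^{\otimes2}(r)=r$, identify $\Delta_{\beta}$ with the adjoint action of $\mathfrak{L}_{\beta}$ on $r$ via the morphism property of $\beta$, and transport the CHYBE by applying $\beta^{\otimes3}$ to the three brackets of~(\ref{cvc}). The only cosmetic difference is that you package the coboundary check as the identity $ad^{\beta}_{x}=ad_{\beta(x)}\circ\beta^{\otimes2}$ together with $\beta^{\otimes2}(r)=r$, whereas the paper applies $\beta^{\otimes2}$ directly to $ad_{x}(r)$; the two computations are the same.
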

\begin{proof} By Theorem \ref{aaa} we know that $\mathfrak{L}_{\beta}$ is a Hom-Lie superbialgebra, multiplicative if $\mathfrak{L}$ is. To check that $\mathfrak{L}_{\beta}$ is coboundary, first note that
$(\beta\alpha)^{\otimes2}(r)=\beta^{\otimes2}\alpha^{\otimes2}(r) = r.$

To check the condition (\ref{888}) in $\mathfrak{L}_{\beta}$, we compute as follows:
\begin{align*}
\Delta_{\beta}(x)&=\beta^{\otimes2}(\Delta(x))
=\beta^{\otimes2}([x, r_{1}]\otimes\alpha(r_{2}))+\beta^{\otimes2}((-1)^{{|x|}{| r_{1}|}}\alpha(r_{1})\otimes[x, r_{2}])\\
&=[x, r_{1}]_{\beta}\otimes\beta\alpha(r_{2})+(-1)^{{|x|}{| r_{1}|}}\beta\alpha(r_{1})\otimes[x, r_{2}]_{\beta}.
\end{align*}
The last expression above is $ad_{x}(r)$ in $\mathfrak{L}_{\beta}$, which shows that $\mathfrak{L}_{\beta}$ is coboundary.\\
Finally, suppose in addition that $\mathfrak{L}$ is quasi-triangular, i.e., $r$ is a solution of the CHYBE in  $\mathfrak{L}$. Using the notation in (\ref{1000}) we have:
\begin{align*}
0=\beta^{\otimes3}([r_{12}, r_{13}]+[r_{12}, r_{23}]+[r_{13}, r_{23}])
=[r_{12}, r_{13}]_{\beta}+[r_{12}, r_{23}]_{\beta}+[r_{13}, r_{23}]_{\beta},
\end{align*}
where the last expression is defined in $\mathfrak{L}_{\beta}$. This shows that $r$ is a solution of the CHYBE in $\mathfrak{L}_{\beta}$, so $\mathfrak{L}_{\beta}$ is quasi-triangular.\\
The following result is the analogue of Corollary \ref{aaaa} for coboundary or quasi-triangular Hom-Lie superbialgebras. It says that these objects can be obtained by twisting coboundary or quasi-triangular Lie superbialgebras via suitable endomorphisms.
\end{proof}
\begin{cor} Let $(\mathfrak{L}, [\cdot ,\cdot ], \Delta, r)$ be a coboundary Lie superbialgebra and an even map $\beta:\mathfrak{L}\rightarrow \mathfrak{L}$ be a Lie superalgebra morphism such that $\beta^{\otimes2}(r)=r$. Then
$\mathfrak{L}_{\beta}=(\mathfrak{L},  [\cdot ,\cdot ]_{\beta}=\beta\circ[\cdot ,\cdot ], \Delta_{\beta}=\Delta\circ\beta,\beta, r)$
is a multiplicative coboundary Hom-Lie superbialgebra. If, in addition, $\mathfrak{L}$ is a quasi-triangular Lie superbialgebra, then $\mathfrak{L}_{\beta}$ is a multiplicative quasi-triangular Hom-Lie superbialgebra.
\end{cor}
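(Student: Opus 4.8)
The plan is to obtain this Corollary as the $\alpha=Id$ specialization of Theorem~\ref{zz}, in complete analogy with how Corollary~\ref{aaaa} is deduced from Theorem~\ref{aaa}. First I would note that a coboundary Lie superbialgebra $(\mathfrak{L},[\cdot,\cdot],\Delta,r)$ is exactly the same datum as a multiplicative coboundary Hom-Lie superbialgebra $(\mathfrak{L},[\cdot,\cdot],\Delta,Id,r)$: the condition $\alpha^{\otimes2}(r)=r$ becomes vacuous for $\alpha=Id$, and \eqref{888} collapses to the classical relation $\Delta(x)=\sum[x,r_{1}]\otimes r_{2}+(-1)^{{|x|}{|r_{1}|}}r_{1}\otimes[x,r_{2}]$ that defines a coboundary Lie superbialgebra.

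Next I would check that the hypotheses of Theorem~\ref{zz} hold with $\alpha=Id$. We are given that $\beta$ is an even Lie superalgebra morphism with $\beta^{\otimes2}(r)=r$; it remains to see that $\beta$ is a morphism of Hom-Lie superbialgebras, i.e. that in addition $\beta\circ\alpha=\alpha\circ\beta$ (automatic since $\alpha=Id$) and $\Delta\circ\beta=\beta^{\otimes2}\circ\Delta$. The one point worth spelling out is that this last identity comes for free once $\Delta=ad(r)$: since $\beta$ commutes with the bracket and $\sum\beta(r_{1})\otimes\beta(r_{2})=r$, one computes
$$\beta^{\otimes2}(\Delta(x))=\sum[\beta(x),\beta(r_{1})]\otimes\beta(r_{2})+(-1)^{{|x|}{|r_{1}|}}\beta(r_{1})\otimes[\beta(x),\beta(r_{2})]=\Delta(\beta(x)).$$
Hence $\beta$ satisfies all the requirements of Theorem~\ref{zz}, and that theorem yields that $\mathfrak{L}_{\beta}=(\mathfrak{L},[\cdot,\cdot]_{\beta},\Delta_{\beta},\beta\circ Id,r)=(\mathfrak{L},[\cdot,\cdot]_{\beta},\Delta_{\beta},\beta,r)$ is a coboundary Hom-Lie superbialgebra, which is multiplicative because $\mathfrak{L}$, being a Lie superbialgebra (the case $\alpha=Id$), is multiplicative.

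For the quasi-triangular assertion, I would observe that when $\alpha=Id$ the CHYBE \eqref{1000} is literally the CYBE; so if $r$ solves the CYBE in $\mathfrak{L}$ it solves the CHYBE in $\mathfrak{L}$, and the ``moreover'' clause of Theorem~\ref{zz} then gives that $r$ solves the CHYBE in $\mathfrak{L}_{\beta}$, i.e. $\mathfrak{L}_{\beta}$ is quasi-triangular. I do not expect any genuine obstacle here: the statement is a pure specialization, and the only step not to omit is the observation above that the seemingly weaker hypothesis ``$\beta$ is a Lie superalgebra morphism'' already forces $\beta$ to respect the cobracket, precisely because the cobracket is coboundary.
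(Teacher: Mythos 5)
Your proposal is correct and follows essentially the same route as the paper: the paper also deduces the corollary as the $\alpha=Id$ special case of Theorem~\ref{zz}, with the only substantive verification being that $\Delta\circ\beta=\beta^{\otimes2}\circ\Delta$, which it establishes by exactly your computation using $\Delta=ad(r)$, the compatibility of $\beta$ with the bracket, and $\beta^{\otimes2}(r)=r$. The quasi-triangular clause is likewise handled by the ``moreover'' part of Theorem~\ref{zz}, just as you describe.
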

\begin{proof} This is the $\alpha=Id$ special case of Theorem \ref{zz}, provided that we can show that\\ $\Delta\circ\beta=\beta^{\otimes2}\circ\Delta$. We compute as follows:
\begin{align*}
\beta^{\otimes2}(\Delta(x))&=\beta^{\otimes2}(ad_{x}(r))
=\beta^{\otimes2}([x, r_{1}]\otimes r_{2})+\beta^{\otimes2}((-1)^{{|x|}{|r_{1}|}}r_{1}\otimes[x, r_{2}])\\
&=[\beta(x), \beta(r_{1})]\otimes\beta(r_{2})+(-1)^{{|x|}{|r_{1}|}}\beta(r_{1})\otimes[\beta(x), \beta(r_{2})]\\
&=[\beta(x), r_{1}]\otimes r_{2}+(-1)^{{|x|}{|r_{1}|}}r_{1}\otimes[\beta(x), r_{2}]
=ad_{\beta(x)}(r)
=\Delta(\beta(x)).
\end{align*}
\end{proof}
The next result says that every multiplicative coboundary or quasi-triangular Hom-Lie superbialgebra gives rise to an infinite sequence of multiplicative coboundary or quasi-triangular Hom-Lie superbialgebra. It is similar to Corollary \ref{www}.
\begin{cor} Let $(\mathfrak{L}, [\cdot ,\cdot ], \Delta, \alpha, r)$ be a multiplicative coboundary (resp. quasi-triangular) Hom-Lie superbialgebra. Then
$\mathfrak{L}_{\alpha^{n}}=(\mathfrak{L},  [\cdot ,\cdot ]_{\alpha^{n}}=\alpha^{n}\circ[\cdot ,\cdot ], \Delta_{\alpha^{n}}=\Delta\circ\alpha^{n},\alpha^{n+1}, r)$
is also a multiplicative coboundary (resp. quasi-triangular)  Hom-Lie superbialgebra for each integer $n\geq0$.
\end{cor}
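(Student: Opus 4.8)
The plan is to obtain this as the $\beta=\alpha^{n}$ special case of Theorem \ref{zz}, in exact parallel with the way Corollary \ref{www} was derived from Theorem \ref{aaa}. To invoke Theorem \ref{zz} with $\beta=\alpha^{n}$ I need only verify its two hypotheses on $\beta$: that $\alpha^{n}$ is an even morphism of the Hom-Lie superbialgebra $(\mathfrak{L},[\cdot,\cdot],\Delta,\alpha)$, and that $(\alpha^{n})^{\otimes2}(r)=r$.

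First I would observe that, because $\mathfrak{L}$ is a \emph{multiplicative} coboundary Hom-Lie superbialgebra, the structure map $\alpha$ is even, commutes with itself, satisfies $\alpha\circ[\cdot,\cdot]=[\cdot,\cdot]\circ\alpha^{\otimes2}$, and satisfies the co-multiplicativity condition $\Delta\circ\alpha=\alpha^{\otimes2}\circ\Delta$ from Definition \ref{00001}. Hence $\alpha$ is a morphism of Hom-Lie superbialgebras, and therefore so is any iterate $\alpha^{n}$ (for $n=0$ this is the identity map, which trivially satisfies all the conditions). Next, since $\alpha^{\otimes2}(r)=r$ is part of the given coboundary data, iterating yields $(\alpha^{n})^{\otimes2}(r)=(\alpha^{\otimes2})^{n}(r)=r$ for every $n\geq0$.

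With both hypotheses in place, Theorem \ref{zz} applies with $\beta=\alpha^{n}$ and gives at once that
$$\mathfrak{L}_{\alpha^{n}}=(\mathfrak{L},\,\alpha^{n}\circ[\cdot,\cdot],\,\Delta\circ\alpha^{n},\,\alpha^{n}\alpha,\,r)=(\mathfrak{L},\,[\cdot,\cdot]_{\alpha^{n}},\,\Delta_{\alpha^{n}},\,\alpha^{n+1},\,r)$$
is a coboundary Hom-Lie superbialgebra, multiplicative because $\mathfrak{L}$ is, and quasi-triangular whenever $\mathfrak{L}$ is quasi-triangular (the last clause because the "moreover" part of Theorem \ref{zz} transfers the CHYBE solution $r$ to $\mathfrak{L}_{\beta}$ unchanged). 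I do not expect any real obstacle: the statement is a direct specialization, and the only point that warrants a moment's attention is confirming that multiplicativity together with co-multiplicativity genuinely makes $\alpha$, hence every $\alpha^{n}$, a morphism in the sense required by Theorem \ref{zz}.
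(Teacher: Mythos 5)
Your proposal is correct and follows exactly the paper's own route: the paper proves this corollary by simply citing the $\beta=\alpha^{n}$ special case of Theorem \ref{zz}. Your extra verification that multiplicativity and co-multiplicativity make $\alpha^{n}$ a morphism with $(\alpha^{n})^{\otimes2}(r)=r$ is a sound (and welcome) filling-in of details the paper leaves implicit.
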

\begin{proof} This is the $\beta=\alpha^{n}$ special case of Theorem \ref{zz}.
\end{proof}
In the following result, we describe some sufficient condition under which a Hom-Lie superalgebra becomes a coboundary Hom-Lie superbialgebra.\\
In what follows, for an element $r=\sum r_{1}\otimes r_{2}$, we write $r_{21}$ for $\tau(r)=\sum (-1)^{{|r_{1}|}{|r_{2}|}}r_{2}\otimes r_{1}.$
\begin{lem}\label{compatiblite} Let  $(\mathfrak{L}, [\cdot ,\cdot ], \alpha)$ be a multiplicative Hom-Lie superalgebra and $r\in\mathfrak{L}^{\otimes2}$ be an element such that $\alpha^{\otimes2}(r)=r$, (\ref{2007}) and (\ref{777}),  ((\ref{777})\ and\ (\ref{2007}),\  i.e., $|r|=\bar{0})$. Then $\Delta=ad(r):\mathfrak{L}\rightarrow\mathfrak{L}^{\otimes2}$ satisfies (\ref{a}), i.e.,
$ad_{[x, y]}(r)=ad_{\alpha(x)}(ad_{y}(r))-(-1)^{{|x|}{|y|}}ad_{\alpha(y)}(ad_{x}(r))$
for $x, y\in\mathfrak{L}$.
\end{lem}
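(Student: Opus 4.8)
The plan is to prove the identity
$$ad_{[x, y]}(r)=ad_{\alpha(x)}(ad_{y}(r))-(-1)^{{|x|}{|y|}}ad_{\alpha(y)}(ad_{x}(r))$$
by direct expansion, organizing the resulting terms by the shape of the $2$-tensors that appear. Writing $r=\sum r_{1}\otimes r_{2}$, I would first substitute the defining formula (\ref{action}) of $ad$ (and its analogue for $ad_{\alpha(-)}$) into each of the three expressions $ad_{[x,y]}(r)$, $ad_{\alpha(x)}(ad_{y}(r))$ and $ad_{\alpha(y)}(ad_{x}(r))$, then use multiplicativity $\alpha([u,v])=[\alpha(u),\alpha(v)]$ to pull $\alpha$ through the brackets, and finally use $\alpha^{\otimes2}(r)=r$ — equivalently, replace $\sum r_{1}\otimes r_{2}$ by $\sum\alpha(r_{1})\otimes\alpha(r_{2})$, which is legitimate because each of the three expressions is linear in $r$ — so that every monomial carries matching powers of $\alpha$. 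After this normalization every term has one of three shapes: (a) a nested bracket $[-,[-,r_{1}]]$ in the first tensor slot together with $\alpha^{2}(r_{2})$ in the second; (b) the mirror image, with the nested bracket in the second slot and $\alpha^{2}(r_{1})$ in the first; (c) a single bracket $\alpha([u,r_{1}])$ in the first slot and $\alpha([v,r_{2}])$ in the second.

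Next I would show each family vanishes separately. The term $ad_{[x,y]}(r)$ contributes nothing of shape (c), and the two remaining contributions of shape (c) cancel after a routine comparison of Koszul signs, using $|[x,y]|=|x|+|y|$. For family (a), collecting the terms having $\alpha^{2}(r_{2})$ in the second slot gives
$$\sum\Big([[x,y],\alpha(r_{1})]-[\alpha(x),[y,r_{1}]]+(-1)^{{|x|}{|y|}}[\alpha(y),[x,r_{1}]]\Big)\otimes\alpha^{2}(r_{2}),$$
and the bracketed expression is exactly the Hom-super-Jacobi identity (\ref{702}) for the triple $x,y,r_{1}$, once its second and third summands are rewritten by the skew-supersymmetry (\ref{701}) (namely $[\alpha(r_{1}),[x,y]]=-(-1)^{{|r_{1}|}(|x|+|y|)}[[x,y],\alpha(r_{1})]$ and $[\alpha(y),[r_{1},x]]=-(-1)^{{|r_{1}|}{|x|}}[\alpha(y),[x,r_{1}]]$) and the common sign $(-1)^{{|r_{1}|}{|x|}}$ is cancelled; hence family (a) is zero. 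Family (b) is disposed of by the identical argument applied to the triple $x,y,r_{2}$, with the overall sign $(-1)^{(|x|+|y|){|r_{1}|}}$ factoring out uniformly; the hypothesis $|r|=\bar{0}$ serves here to keep the termwise parities $|r_{1}|,|r_{2}|$ consistent throughout.

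The real work, and essentially the only source of difficulty, is the sign bookkeeping: one must track carefully the signs generated when $ad_{\alpha(x)}$ is applied to the two-term expression $ad_{y}(r)$ and verify that after normalization the coefficients assemble into a single instance of (\ref{702}); there is no conceptual obstacle. As a cleaner alternative, one may observe that the desired identity is precisely the representation axiom (\ref{2 condition}) of Definition \ref{samak} for the $\alpha$-twisted adjoint action of $\mathfrak{L}$ on $\mathfrak{L}^{\otimes2}$, evaluated at $r$ (using $ad_{[x,y]}(\alpha^{\otimes2}(r))=ad_{[x,y]}(r)$); it therefore suffices to check that $(\mathfrak{L}^{\otimes2},ad,\alpha^{\otimes2})$ is a representation, which follows from $(\mathfrak{L},ad,\alpha)$ being the adjoint representation together with the fact that a twisted tensor product of Hom-representations is again a Hom-representation. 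Either route reduces the lemma to the Hom-Jacobi identity and the multiplicativity of $\alpha$.
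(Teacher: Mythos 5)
Your main argument is correct and is essentially the paper's own proof: both expand $ad$ via (\ref{action}), use $\alpha^{\otimes2}(r)=r$ to replace $r_1\otimes r_2$ by $\alpha(r_1)\otimes\alpha(r_2)$, invoke multiplicativity and skew-supersymmetry, and reduce everything to one application of the Hom-super-Jacobi identity (\ref{702}) in each tensor slot, with the mixed single-bracket terms cancelling in pairs --- the paper merely writes this as a forward transformation of the left-hand side rather than as a term-by-term cancellation of the difference. Your closing representation-theoretic remark is a reasonable conceptual alternative, but note that verifying condition (\ref{2 condition}) for the twisted action on $\mathfrak{L}^{\otimes2}$ is exactly the same computation, so it does not shorten the work; the paper does not take that route.
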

\begin{proof} We will use $\alpha^{\otimes2}(r)=r$, the skew-supersymmetry (\ref{701}) and the Hom-Jacobi identity of (\ref{702}) and $\alpha([x,y])=[\alpha(x),\alpha(y)]$,  (multiplicative) in the computation below. For $x, y \in\mathfrak{L}$, we have:
\begin{align*}
ad_{[x, y]}(r)&=[[x, y], r_{1}]\otimes\alpha(r_{2})+(-1)^{{|[x, y]|}{|r_{1}|}}\alpha(r_{1})\otimes[[x, y], r_{2}]\\
&=[[x, y], \alpha(r_{1})]\otimes\alpha^{2}(r_{2})+(-1)^{{|x|}{|r_{1}|}}(-1)^{{|y|}{|r_{1}|}}\alpha^{2}(r_{1})\otimes[[x, y], \alpha(r_{2})]\\
&=\textbf{(}[\alpha(x), [y,r_{1}]]+(-1)^{{|x|}{|y|}}(-1)^{{|x|}{|r_{1}|}}[\alpha(y), [r_{1}, x]]\textbf{)}\otimes\alpha^{2}(r_{2})\\
&+(-1)^{{|x|}{|r_{1}|}}(-1)^{{|y|}{|r_{1}|}}\alpha^{2}(r_{1})\otimes\textbf{(}[\alpha(x), [y,r_{2}]]+(-1)^{{|x|}{|y|}}(-1)^{{|x|}{|r_{2}|}}[\alpha(y), [r_{2}, x]]\textbf{)}\\
&=[\alpha(x), [y,r_{1}]]\otimes\alpha^{2}(r_{2})+(-1)^{{|x|}{|y|}}(-1)^{{|x|}{|r_{1}|}}[\alpha(y), [r_{1}, x]]\otimes\alpha^{2}(r_{2})\\
&+(-1)^{{|x|}{|r_{1}|}}(-1)^{{|y|}{|r_{1}|}}\alpha^{2}(r_{1})\otimes[\alpha(x), [y,r_{2}]]+(-1)^{{|x|}{|y|}}(-1)^{{|y|}{|r_{1}|}}\alpha^{2}(r_{1})\otimes[\alpha(y), [r_{2}, x]]\\
&=[\alpha(x), [y,r_{1}]]\otimes\alpha^{2}(r_{2})+(-1)^{{|x|}{|y|}}(-1)^{{|x|}{|r_{1}|}}\alpha([y,r_{1}])\otimes
[\alpha(x), \alpha(r_{2})]\\
&+(-1)^{{|y|}{|r_{1}|}}[\alpha(x), \alpha(r_{1})]\otimes\alpha([y,r_{2}])+(-1)^{{|x|}{|r_{1}|}}(-1)^{{|y|}{|r_{1}|}}\alpha^{2}(r_{1})\otimes[\alpha(x), [y,r_{2}]]\\
&-(-1)^{{|x|}{|y|}}[\alpha(y), [x,r_{1}]]\otimes\alpha^{2}(r_{2})-(-1)^{{|y|}{|r_{1}|}}\alpha([x, r_{1}])\otimes[\alpha(y), \alpha(r_{2})]\\
&-(-1)^{{|x|}{|y|}}(-1)^{{|x|}{|r_{1}|}}[\alpha(y), \alpha(r_{1})]\otimes\alpha([x, r_{2}])-(-1)^{{|x|}{|y|}}(-1)^{{|y|}{|r_{1}|}+{|x|}{|r_{1}|}}\alpha^{2}(r_{1})\otimes[\alpha(y), [x,r_{2}]]\\
&=ad_{\alpha(x)}\textbf{(}[y,r_{1}]\otimes\alpha(r_{2})+(-1)^{{|y|}{|r_{1}|}}\alpha(r_{1})\otimes[y, r_{2}]\textbf{)}\\
&-(-1)^{{|x|}{|y|}}ad_{\alpha(y)}\textbf{(}[x,r_{1}]\otimes\alpha(r_{2})+(-1)^{{|x|}{|r_{1}|}}\alpha(r_{1})\otimes[x, r_{2}]\textbf{)}\\
&=ad_{\alpha(x)}(ad_{y}(r))-(-1)^{{|x|}{|y|}}ad_{\alpha(y)}(ad_{x}(r)).
\end{align*}
\end{proof}
\begin{thm}\label{Lie} Let $(\mathfrak{L}, [\cdot ,\cdot ], \alpha)$ be a multiplicative Hom-Lie superalgebra and $r\in\mathfrak{L}^{\otimes2}$ be an element such that $\alpha^{\otimes2}(r)=r, \ \ r_{21}=-r,$\  (\ref{2007}) and (\ref{777}),  ((\ref{777})\ and\ (\ref{2007}),\  i.e., $|r|=\bar{0})$,
and
\begin{equation}\label{ppp}
\alpha^{\otimes3}(ad_{x}([[r, r]]^{\alpha}))=0
\end{equation}
for all $x\in\mathfrak{L}$, where $[[r, r]]^{\alpha}$ is defined in (\ref{1000}). Define $\Delta:\mathfrak{L}\rightarrow\mathfrak{L}^{\otimes2}$ as $\Delta(x)=ad_{x}(r)$ as in (\ref{888}).\\
Then $(\mathfrak{L}, [\cdot ,\cdot ], \Delta, \alpha, r)$ is a multiplicative coboundary Hom-Lie superbialgebra.
\end{thm}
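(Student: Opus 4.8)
The plan is to check, in turn, the three defining properties of a (multiplicative) coboundary Hom-Lie superbialgebra for the quadruple $(\mathfrak{L},[\cdot,\cdot],\Delta,\alpha)$ together with $r$, where $\Delta=ad(r)$. The Hom-Lie superalgebra axioms for $(\mathfrak{L},[\cdot,\cdot],\alpha)$ are part of the hypothesis, and the coboundary relation $\Delta=ad(r)$ with $\alpha^{\otimes2}(r)=r$ holds by construction, so what remains is to show that $(\mathfrak{L},\Delta,\alpha)$ is a co-multiplicative Hom-Lie supercoalgebra and that the compatibility condition \eqref{a} holds.

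The routine block I would dispose of first. The grading condition \eqref{2007} is a direct consequence of $|r|=\bar{0}$ and the evenness of $[\cdot,\cdot]$: each of the two summands of $ad_x(r)=\sum[x,r_1]\otimes\alpha(r_2)+(-1)^{|x||r_1|}\alpha(r_1)\otimes[x,r_2]$ lies in the homogeneous component of parity $|x|+|r_1|+|r_2|=|x|$. Skew-supersymmetry $\tau\circ\Delta=-\Delta$ follows from $r_{21}=-r$: one verifies the identity $\tau\circ ad_x(r)=ad_x(r_{21})$ by a short sign computation (using $|r_1|=|r_2|$), whence $\tau(\Delta(x))=ad_x(r_{21})=-ad_x(r)=-\Delta(x)$. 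Co-multiplicativity $\Delta\circ\alpha=\alpha^{\otimes2}\circ\Delta$ follows from the multiplicativity $\alpha([u,v])=[\alpha(u),\alpha(v)]$ together with $\alpha^{\otimes2}(r)=r$: indeed $\alpha^{\otimes2}(ad_x(r))=ad_{\alpha(x)}(\alpha^{\otimes2}(r))=ad_{\alpha(x)}(r)=\Delta(\alpha(x))$. The compatibility condition \eqref{a} is then nothing but Lemma \ref{compatiblite}: since $ad_{\alpha(x)}(\Delta(y))=ad_{\alpha(x)}(ad_y(r))$ and likewise with $x$ and $y$ interchanged, the identity $ad_{[x,y]}(r)=ad_{\alpha(x)}(ad_y(r))-(-1)^{|x||y|}ad_{\alpha(y)}(ad_x(r))$ established there is precisely $\Delta([x,y])=ad_{\alpha(x)}(\Delta(y))-(-1)^{|x||y|}ad_{\alpha(y)}(\Delta(x))$.

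The genuine work, and the main obstacle, is the co-Jacobi identity \eqref{jacobi}, i.e. $(1\otimes1\otimes1+\xi+\xi^{2})\circ(\alpha\otimes\Delta)\circ\Delta=0$. Here I would introduce a second symbolic copy $r=\sum r'_{1}\otimes r'_{2}$, expand $(\alpha\otimes\Delta)(\Delta(x))$ by applying $\Delta=ad(r)$ a second time (using co-multiplicativity to move $\Delta$ past $\alpha$ where it occurs), and then apply the cyclic symmetrizer $1\otimes1\otimes1+\xi+\xi^{2}$. The resulting sum of doubly-nested brackets must be reorganised by repeated use of the Hom-super-Jacobi identity \eqref{702}, the skew-supersymmetry \eqref{701} of $[\cdot,\cdot]$, the condition $r_{21}=-r$, and $\alpha^{\otimes2}(r)=r$; as in the classical and the non-graded Hom cases, the terms recombine so that this cyclic sum, evaluated at $x$, equals $\pm\,\alpha^{\otimes3}\bigl(ad_x([[r,r]]^{\alpha})\bigr)$, with $[[r,r]]^{\alpha}$ and the three brackets $[r_{12},r_{13}]$, $[r_{12},r_{23}]$, $[r_{13},r_{23}]$ as in \eqref{1000}--\eqref{cvc}. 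By hypothesis \eqref{ppp} this vanishes for every $x$, which gives \eqref{jacobi}. The delicate part is the sign bookkeeping through the permutations $\xi,\xi^{2}$: one must make the Koszul signs picked up by cyclically permuting $r_1,r_2,r'_1,r'_2$ match exactly the signs built into the definitions \eqref{cvc}, and this is where almost all the labour sits.

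Once \eqref{2007}, skew-supersymmetry, \eqref{jacobi}, co-multiplicativity and \eqref{a} are in place, $(\mathfrak{L},[\cdot,\cdot],\Delta,\alpha)$ is a (multiplicative) Hom-Lie superbialgebra; since $\Delta=ad(r)$ with $\alpha^{\otimes2}(r)=r$, it is coboundary, with distinguished element $r$. Multiplicativity of $[\cdot,\cdot]$, of $\alpha$, and of $\Delta$ (just established) then yields the multiplicative statement, completing the proof.
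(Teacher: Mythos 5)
Your proposal is correct and follows essentially the same route as the paper: the routine verifications (grading, skew-supersymmetry from $r_{21}=-r$, co-multiplicativity from $\alpha^{\otimes2}(r)=r$ and multiplicativity, compatibility via Lemma \ref{compatiblite}) are handled identically, and your plan for the Hom-super-coJacobi identity --- introduce a second copy $r'$ of $r$, expand $(\alpha\otimes\Delta)\circ\Delta$, apply the cyclic symmetrizer, and match the result against $\alpha^{\otimes3}(ad_x([[r,r]]^{\alpha}))$ --- is exactly what the paper does via the decomposition into the terms $A_i, B_i, C_i, D_i$ and Lemma \ref{xyz0}. The only difference is that you leave that sign-sensitive recombination as a sketch (with an undetermined sign $\pm$), whereas the paper executes it termwise and finds the cyclic sum equals $\alpha^{\otimes3}(ad_x([[r,r]]^{\alpha}))$ with a plus sign.
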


\begin{proof} We will show the following statements:\\
1) $\Delta=ad(r)$ commutes with $\alpha$.\\
2) $\Delta$ is skew-supersymmetric.\\
3) The compatibility condition (\ref{a}) holds.\\
4) The condition (\ref{ppp}) is equivalent Hom-super-coJacobi identity of $\Delta$ (\ref{jacobi}).\\
Write $r=\sum r_{1}\otimes r_{2}$, $r'=\sum r'_{1}\otimes r'_{2}$ and $\tau(r)=\sum (-1)^{{|r_{1}|}{|r_{2}|}}r_{2}\otimes r_{1}$, $\tau(r')=\sum (-1)^{{|r'_{1}|}{|r'_{2}|}}r'_{2}\otimes r'_{1}$. To show that $\Delta=ad(r)$ commutes with $\alpha$, pick an element $x\in\mathfrak{L}$, the summation sign will often be omitted in computation to simplify the typography. Using the definition $\Delta=ad(r)$, $\alpha([x, r_{1}])=[\alpha(x), \alpha(r_{1})]$, $\alpha([x, r_{2}])=[\alpha(x), \alpha(r_{2})]$ and the assumption $\alpha^{\otimes2}(r)=r$ we have
\begin{align*}
\Delta(\alpha(x))&=[\alpha(x), r_{1}]\otimes\alpha(r_{2})+(-1)^{{|x|}{|r_{1}|}}\alpha(r_{1})\otimes[\alpha(x), r_{2}]\\
&=\alpha([x, r_{1}])\otimes\alpha^{2}(r_{2})+(-1)^{{|x|}{|r_{1}|}}\alpha^{2}(r_{1})\otimes\alpha([x, r_{2}])
=\alpha^{\otimes2}(\Delta(x)).
\end{align*}
This shows that $\Delta$ commutes with $\alpha$.\\
Now we show that $\Delta=ad(r)$ is skew-supersymmetric. We have
\begin{align*}
\Delta(x)&=[x, r_{1}]\otimes\alpha(r_{2})+(-1)^{{|x|}{|r_{1}|}}\alpha(r_{1})\otimes[x, r_{2}].\\
\tau(\Delta(x))&=(-1)^{{|r_{1}|}{|r_{2}|}}([x, r_{2}]\otimes\alpha(r_{1})+(-1)^{{|x|}{|r_{2}|}}\alpha(r_{2})\otimes[x, r_{1}]).
\end{align*}
Then  $\Delta(x)+\tau(\Delta(x))=ad_{x}(r+r_{21})=ad_{x}(0)=0,$
since
\begin{align*}
r+r_{21}&=\sum(r_{1}\otimes r_{2}+(-1)^{{|r_{1}|}{|r_{2}|}}r_{2}\otimes r_{1}).
\end{align*}
We will prove that the compatibility condition (\ref{a}) holds in Lemma \ref{compatiblite}.\\
Finally, we show that the Hom-super-coJacobi identity (\ref{jacobi}) of $\Delta=ad(r)$ is equivalent to (\ref{ppp}). Let us unwrap the Hom-super-coJacobi identity. Fix an element $x\in\mathfrak{L}$, and let $r'=\sum r'_{1}\otimes r'_{2}$ be another copy of $r$. Then we write \begin{align*}
\omega&=(\alpha\otimes\Delta)(\Delta(x))\\
&=(\alpha\otimes\Delta)([x, r_{1}]\otimes\alpha(r_{2})+(-1)^{{|x|}{|r_{1}|}}\alpha(r_{1})\otimes[x, r_{2}])\\
&=\alpha([x, r_{1}])\otimes[\alpha( r_{2}), r'_{1}]\otimes\alpha( r'_{2})+(-1)^{{|r_{2}|}{|r'_{1}|}}\alpha([x, r_{1}])\otimes\alpha(r'_{1})\otimes[\alpha( r_{2}), r'_{2}]\\
&+(-1)^{{|x|}{|r_{1}|}}\alpha^2(r_{1})\otimes[[x, r_{2}], r'_{1}]\otimes\alpha(r'_{2})+(-1)^{{|x|}{|r_{1}|}}(-1)^{{|x|}{|r'_{1}|}}(-1)^{{|r_{2}|}{|r'_{1}|}}\alpha^2(r_{1})\otimes\alpha(r'_{1})\otimes[[x, r_{2}], r'_{2}].
\end{align*}
Note
\begin{align*}
A_{1}&=\alpha([x, r_{1}])\otimes[\alpha( r_{2}), r'_{1}]\otimes\alpha( r'_{2}),\\
B_{1}&=(-1)^{{|r_{2}|}{|r'_{1}|}}\alpha([x, r_{1}])\otimes\alpha(r'_{1})\otimes[\alpha( r_{2}), r'_{2}],\\
C_{1}&=(-1)^{{|x|}{|r_{1}|}}\alpha^2(r_{1})\otimes[[x, r_{2}], r'_{1}]\otimes\alpha(r'_{2}),\\
D_{1}&=(-1)^{{|x|}{|r_{1}|}}(-1)^{{|x|}{|r'_{1}|}}(-1)^{{|r_{2}|}{|r'_{1}|}}\alpha^2(r_{1})\otimes\alpha(r'_{1})\otimes[[x, r_{2}], r'_{2}].
\end{align*}
we get
$
\omega=A_{1}+B_{1}+C_{1}+D_{1}.
$

With these notations, the Hom-super-coJacobi identity of $\Delta=ad(r)$ (applied to $x$) becomes
\begin{equation}\label{anis3}
   (1\otimes1\otimes1+\xi+\xi^2)\circ(\alpha\otimes\Delta)\circ\Delta(x)
    = \sum_{i=1}^{3}(A_{i}+B_{i}+C_{i}+D_{i})=0.
 \end{equation}
Therefore, to prove the equivalence between the Hom-super-coJacobi identity of $\Delta$ and (\ref{ppp}), it suffices to show
\begin{equation}\label{Hom}
\alpha^{\otimes3}(ad_{x}([[r, r]]^{\alpha}))=\sum_{i=1}^{3}(A_{i}+B_{i}+C_{i}+D_{i}),
\end{equation}
which we will prove in Lemma \ref{xyz0} below.\\
The proof of Theorem \ref{Lie} will be completed once we prove the Lemma below.
\end{proof}
\begin{lem}\label{xyz0} The condition (\ref{Hom}) holds.
\end{lem}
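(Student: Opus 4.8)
\medskip
\noindent\textbf{Proof strategy.}
The plan is to prove (\ref{Hom}) by expanding both sides into sums of elementary tensors in $\mathfrak{L}^{\otimes3}$ and matching them term by term. Since the right--hand side of (\ref{Hom}) equals $(1\otimes1\otimes1+\xi+\xi^{2})(A_{1}+B_{1}+C_{1}+D_{1})$, where $A_{1},B_{1},C_{1},D_{1}$ are the four summands of $\omega=(\alpha\otimes\Delta)(\Delta(x))$ written out above, it suffices to compute the twelve tensors $\xi^{j}(A_{1}),\xi^{j}(B_{1}),\xi^{j}(C_{1}),\xi^{j}(D_{1})$ for $j=0,1,2$ (using $\xi(a\otimes b\otimes c)=(-1)^{|a|(|b|+|c|)}b\otimes c\otimes a$) and to compare their sum with the expansion of $\alpha^{\otimes3}(ad_{x}([[r,r]]^{\alpha}))$.

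For the left--hand side I would start from the definitions (\ref{cvc}) of $[r_{12},r_{13}]$, $[r_{12},r_{23}]$, $[r_{13},r_{23}]$, apply $ad_{x}$ to each of the three brackets (formula (\ref{action}) with $n=3$, producing three terms apiece), and then apply $\alpha^{\otimes3}$. Using the multiplicativity $\alpha([u,v])=[\alpha(u),\alpha(v)]$ to push the outer $\alpha$'s inside the brackets, and the identity $\alpha^{\otimes2}(r)=r$ --- which for a single copy $r=\sum r_{1}\otimes r_{2}$ lets one cancel a common power of $\alpha$ on the two legs, i.e. $\sum\alpha^{a}(r_{1})\otimes\alpha^{b}(r_{2})=\sum\alpha^{a-b}(r_{1})\otimes r_{2}$ for $a\geq b$ --- one reduces the nine resulting tensors to a short normal form: three of them carry a doubly nested bracket $[\alpha(x),[r_{1},r'_{1}]]$ in one of the three slots (the other two slots being powers of $\alpha$ on $r_{2}$ and $r'_{2}$), and the remaining six have the shape $[r_{1},r'_{1}]\otimes[\alpha(x),r_{2}]\otimes\alpha(r'_{2})$ with the three slots run through all six orders.

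For the right--hand side, the tensors $A_{i}$ and $B_{i}$ already have the second shape once $\alpha$--powers are normalized, so that after renaming the two commuting copies $r\leftrightarrow r'$ and using the skew--supersymmetry $[u,v]=-(-1)^{|u||v|}[v,u]$ they account for the six ``second--shape'' terms on the left. The tensors $C_{i},D_{i}$ carry a \emph{left}--nested bracket $[[x,r_{2}],r'_{1}]$ (resp. $[[x,r_{2}],r'_{2}]$); here I would use $\alpha^{\otimes2}(r')=r'$ to place one $\alpha$ on the leg of $r'$ inside that bracket, rewrite the result by skew--supersymmetry as $\pm[\alpha(r'_{1}),[x,r_{2}]]$, and apply the Hom super--Jacobi identity (\ref{702}) once to get
$$[[x,r_{2}],\alpha(r'_{1})]=(-1)^{|x|(|r_{2}|+|r'_{1}|)}[\alpha(r_{2}),[r'_{1},x]]+[\alpha(x),[r_{2},r'_{1}]].$$
The $[\alpha(x),[r_{2},r'_{1}]]$--part is exactly of the first ``doubly nested'' shape; a sign check shows it matches the corresponding left--hand term --- the piece from $C_{1}$ reproduces the term obtained by letting $ad_{x}$ act on the middle slot of $[r_{12},r_{23}]$, and the piece from $D_{1}$ reproduces the term from the third slot of $[r_{13},r_{23}]$. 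The remaining $[\alpha(r_{2}),[r'_{1},x]]$--residues, which do not occur on the left, must be shown to cancel; this is carried out with a further, careful use of skew--supersymmetry, of the Hom super--Jacobi identity and of the relation $r_{21}=-r$, together with the cyclic sum $1\otimes1\otimes1+\xi+\xi^{2}$, which ties the contributions of $C_{i}$, $D_{i}$ and their $\xi$--images to one another and also recovers the last ``doubly nested'' left--hand term. Assembling all twelve tensors gives (\ref{Hom}), which completes the proof of Theorem \ref{Lie}.

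\medskip
\noindent The only real difficulty is the bookkeeping. One must track the $\mathbb{Z}_{2}$--signs through (i) the Koszul signs produced by $\tau$, $\xi$ and the skew--supersymmetry of $[\cdot ,\cdot ]$ and of $r$ (observe that $r_{21}=-r$ with $|r|=\bar{0}$ reads $\sum(-1)^{|r_{1}|}r_{2}\otimes r_{1}=-\sum r_{1}\otimes r_{2}$), (ii) the repeated shifts of powers of $\alpha$ licensed by $\alpha^{\otimes2}(r)=r$, and (iii) the Hom super--Jacobi identity, which is a three--term ``triangular'' relation, so that it must be invoked in precisely the spots that lead to a terminating normal form rather than looping. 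Conceptually nothing new is required: this is the $\mathbb{Z}_{2}$--graded, $\alpha$--twisted version of the classical fact that the co--Jacobiator of $ad(r)$ equals $ad_{x}$ applied to the Schouten--type element $[[r,r]]^{\alpha}$, and the hypotheses $\alpha^{\otimes2}(r)=r$, $r_{21}=-r$, $|r|=\bar{0}$, (\ref{2007}) and multiplicativity of $\alpha$ are exactly what makes the cancellations close. (Lemma \ref{compatiblite} is available but is not needed for this step.)
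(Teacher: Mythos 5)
Your proposal is correct and follows essentially the same route as the paper: expand $\alpha^{\otimes3}(ad_x([[r,r]]^{\alpha}))$ bracket by bracket, normalize powers of $\alpha$ via $\alpha^{\otimes2}(r)=r$ and multiplicativity, match the $A_i,B_i$ against the single-bracket tensors using $r=r'$ swaps and $r_{21}=-r$, and absorb the left-nested $C_i,D_i$ terms via the Hom super-Jacobi identity. The only cosmetic difference is organizational: the paper pairs the nested terms two at a time (e.g.\ $C_3+D_2$) so that one application of (\ref{702}) yields the doubly nested bracket directly with no leftover residues to cancel, and verifies the identity separately for each of the three groupings $A_3+B_2+C_3+D_2$, $A_1+B_3+C_1+D_3$, $A_2+B_1+C_2+D_1$.
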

\begin{proof} It suffices to show the following three equalities:\\
\begin{eqnarray}\label{102030}
  &&
   \alpha^{\otimes3}(ad_{x}([r_{12}, r_{13}]))=A_{3}+B_{2}+C_{3}+D_{2},
   \end{eqnarray}
\begin{eqnarray}\label{anis1}
  &&
   \alpha^{\otimes3}(ad_{x}([r_{12}, r_{23}]))=A_{1}+B_{3}+C_{1}+D_{3},
   \end{eqnarray}
\begin{eqnarray}\label{anis2}
  &&
   \alpha^{\otimes3}(ad_{x}([r_{13}, r_{23}]))=A_{2}+B_{1}+C_{2}+D_{1},
   \end{eqnarray}
where the three bracket, which add up to $[[r,r]]^{\alpha}$, are defined in (\ref{cvc}). The proofs for the three equalities are very similar, so we will only give the proof of (\ref{102030}).\\
Since $r=r'$ and $r_{12}=-r$, we have
\begin{eqnarray}
  &&
   A_{3}=(-1)^{{|x|}{|r_{2}|}}(-1)^{{|r'_{1}|}{|r_{1}|}}[\alpha(r_{2}), r'_{1}]\otimes\alpha(r'_{2})\otimes\alpha([x, r_{1}])
   \\ \nonumber
  &&
   \ \ \ \ =(-1)^{{|x|}{|r_{2}|}}(-1)^{{|r_{1}|}{|r'_{1}|}}[\alpha(r'_{2}), r_{1}]\otimes\alpha(r_{2})\otimes\alpha([x, r'_{1}])  \\ \nonumber
  &&
  \ \ \ \ =-(-1)^{{|x|}{|r_{2}|}}[\alpha(r'_{1}), r_{1}]\otimes\alpha(r_{2})\otimes\alpha([x, r'_{2}])\\ \nonumber
  &&
  \ \ \ \ =-(-1)^{{|x|}{|r_{2}|}}[\alpha^2(r'_{1}), \alpha^2(r_{1})]\otimes\alpha^3(r_{2})\otimes\alpha([x, \alpha(r'_{2})])\\ \nonumber
  &&
  \ \ \ \ =\alpha^{\otimes3}\textbf{(}(-1)^{{|x|}{|r_{2}|}}(-1)^{{|r'_{1}|}{|r_{2}|}}\alpha([r_{1}, r'_{1}])\otimes\alpha^2(r_{2})\otimes[x, \alpha(r'_{2})]\textbf{)}.
 \end{eqnarray}
In the fourth equality we used $\alpha^{\otimes2}(r)=r$, $\alpha^{\otimes2}(r')=r'$. In the equality we used the skew-supersymmetry of $[-, -]$ and $\alpha([r_{1}, r'_{1}])=[\alpha(r_{1}), \alpha(r'_{1})]$, we know $(-1)^{{|x|}{|r_{1}|}}(-1)^{{|x|}{|r'_{1}|}}=1$, just to check the calculations. Similar computation give
\begin{eqnarray*}
  &&
   B_{2}=(-1)^{{|r'_{1}|}{|r_{1}|}}[\alpha(r_{2}), r'_{2}]\otimes\alpha([x, r_{1}])\otimes\alpha(r'_{1})
  \\ \nonumber
&&=\alpha^{\otimes3}\textbf{(}(-1)^{{|x|}{|r_{1}|}}(-1)^{{|x|}{|r'_{1}|}}(-1)^{{|r'_{1}|}{|r_{2}|}}\alpha([r_{1}, r'_{1}])\otimes[x, \alpha(r_{2})]\otimes\alpha^2(r'_{2})\textbf{)},
   \end{eqnarray*}
   \begin{eqnarray*}
   C_{3}=(-1)^{{|r'_{1}|}{|r_{2}|}}[[x, r_{2}], r'_{1}]\otimes\alpha(r'_{2})\otimes\alpha^2(r_{1})
=(-1)^{{|x|}{|r'_{2}|}}[[r'_{2}, x], \alpha(r_{2})]\otimes\alpha^2(r_{1})\otimes\alpha^2(r'_{1}),
   \end{eqnarray*}
\begin{eqnarray*}
   D_{2}=(-1)^{{|r'_{1}|}{|r_{2}|}}[[x, r_{2}], r'_{2}]\otimes\alpha^2(r'_{2})\otimes\alpha(r'_{1})
=(-1)^{{|r'_{1}|}{|r_{2}|}}[[x, r_{2}], \alpha(r'_{2})]\otimes\alpha^2(r_{1})\otimes\alpha^2(r'_{1}). 
   \end{eqnarray*}
Using, in addition, the skew-supersymmetry (\ref{701}) and the Hom-Jacobi identity (\ref{702}) of $[-, -]$, we add $C_{3}$ and $D_{2}$:
\begin{align*}
C_{3}+D_{2}&=(-1)^{{|r'_{1}|}{|r_{2}|}}\textbf{(}(-1)^{{|x|}{|r'_{2}|}}(-1)^{{|r_{2}|}{|r'_{2}|}}[[r'_{2}, x], \alpha(r_{2})]+[[x, r_{2}], \alpha(r'_{2})]\textbf{)}\otimes\alpha^2(r_{1})\otimes\alpha^2(r'_{1})\\
&=(-1)^{{|r'_{1}|}{|r_{2}|}}[\alpha(x),[r_{2}, r'_{2}]]\otimes\alpha^2(r_{1})\otimes\alpha^2(r'_{1})\\
&=(-1)^{{|r'_{1}|}{|r_{2}|}}[\alpha(x),[r_{1}, r'_{1}]]\otimes\alpha^2(r_{2})\otimes\alpha^2(r'_{2})\\
&=(-1)^{{|r'_{1}|}{|r_{2}|}}[\alpha(x),[\alpha(r_{1}), \alpha(r'_{1})]]\otimes\alpha^3(r_{2})\otimes\alpha^3(r'_{2})\\
&=\alpha^{\otimes3}\textbf{(}(-1)^{{|r'_{1}|}{|r_{2}|}}[x, [r_{1}, r'_{1}]]\otimes\alpha^2(r_{2})\otimes\alpha^2(r'_{2})\textbf{)}.
\end{align*}
Using the definition (\ref{action}) of $ad_{x}$, we now conclude that:
\begin{align*}
A_{3}+B_{2}+C_{3}+D_{2}&=\alpha^{\otimes3}\textbf{(}(-1)^{{|x|}{|r_{2}|}}(-1)^{{|r'_{1}|}{|r_{2}|}}\alpha([r_{1}, r'_{1}])\otimes\alpha^2(r_{2})\otimes[x, \alpha(r'_{2})]\textbf{)}\\
&+\alpha^{\otimes3}\textbf{(}(-1)^{{|x|}{|r_{1}|}}(-1)^{{|x|}{|r'_{1}|}}(-1)^{{|r'_{1}|}{|r_{2}|}}\alpha([r_{1}, r'_{1}])\otimes[x, \alpha(r_{2})]\otimes\alpha^2(r'_{2})\textbf{)}\\
&+\alpha^{\otimes3}\textbf{(}(-1)^{{|r'_{1}|}{|r_{2}|}}[x, [r_{1}, r'_{1}]]\otimes\alpha^2(r_{2})\otimes\alpha^2(r'_{2})\textbf{)}\\
&=\alpha^{\otimes3}\textbf{(}(-1)^{{|r'_{1}|}{|r_{2}|}}ad_{x}([r_{1}, r'_{1}]\otimes\alpha(r_{2})\otimes\alpha(r'_{2}))\textbf{)}\\
&=\alpha^{\otimes3}\textbf{(}ad_{x}((-1)^{{|r'_{1}|}{|r_{2}|}}[r_{1}, r'_{1}]\otimes\alpha(r_{2})\otimes\alpha(r'_{2}))\textbf{)}\\
&=\alpha^{\otimes3}\textbf{(}ad_{x}([r_{12},r_{13}])\textbf{)}.
\end{align*}
This proves (\ref{102030}).\\
The equalities (\ref{anis1}) and (\ref{anis2}) are proved by very similar computations.\\
Therefore, the equality (\ref{Hom}) holds. Together with (\ref{anis3}) we have shown that the the Hom-super-coJacobi identity of $\Delta=ad(r)$ is equivalent to $\alpha^{\otimes3}(ad_{x}([[r, r]]^{\alpha}))=0$
\end{proof}
The following result is an immediate consequence of Theorem (\ref{Lie}). It gives sufficient conditions under which a Hom-Lie superalgebra becomes a quasi-triangular Hom-Lie superbialgebra.
\begin{cor} Let $(\mathfrak{L}, [\cdot ,\cdot ], \alpha)$ be a multiplicative Hom-Lie superalgebra and $r\in\mathfrak{L}^{\otimes2}$ be an element such that $\alpha^{\otimes2}(r)=r, \ \ r_{21}=-r,$\  (\ref{2007}) and (\ref{777}), ((\ref{777})\ and\ (\ref{2007}),\  i.e., $|r|=\bar{0})$, and $[[r, r]]^{\alpha}=0.$
Then $(\mathfrak{L}, [\cdot ,\cdot ], ad(r), \alpha, r)$ is a multiplicative quasi-triangular Hom-Lie superbialgebra.
\end{cor}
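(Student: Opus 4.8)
The plan is to deduce this directly from Theorem \ref{Lie}, since all the structural hypotheses appearing there are already assumed in the statement; the only extra input here is the stronger condition $[[r,r]]^{\alpha}=0$, so the sole thing to check is that this forces condition (\ref{ppp}) of Theorem \ref{Lie} to hold.

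First I would note that the hypotheses $\alpha^{\otimes2}(r)=r$, $r_{21}=-r$, together with (\ref{2007}) and (\ref{777}) (equivalently $|r|=\bar 0$), are exactly those required by Theorem \ref{Lie}. It then remains to verify (\ref{ppp}), namely that $\alpha^{\otimes3}(ad_{x}([[r,r]]^{\alpha}))=0$ for all $x\in\mathfrak{L}$. But by assumption $[[r,r]]^{\alpha}=0$ as an element of $\mathfrak{L}^{\otimes3}$, and both $ad_{x}:\mathfrak{L}^{\otimes3}\rightarrow\mathfrak{L}^{\otimes3}$, as defined in (\ref{action}), and $\alpha^{\otimes3}$ are linear; hence $\alpha^{\otimes3}(ad_{x}([[r,r]]^{\alpha}))=\alpha^{\otimes3}(ad_{x}(0))=0$. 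So (\ref{ppp}) holds in the trivial way.

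Applying Theorem \ref{Lie} then gives that $(\mathfrak{L},[\cdot,\cdot],\Delta=ad(r),\alpha,r)$, with $\Delta(x)=ad_{x}(r)$ as in (\ref{888}), is a multiplicative coboundary Hom-Lie superbialgebra. Finally, since $r$ is moreover a solution of the CHYBE (\ref{1000}), the additional requirement in the definition of a (multiplicative) quasi-triangular Hom-Lie superbialgebra is satisfied, and therefore $(\mathfrak{L},[\cdot,\cdot],ad(r),\alpha,r)$ is a multiplicative quasi-triangular Hom-Lie superbialgebra.

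There is no genuine obstacle to overcome: the entire substance is already packaged in Theorem \ref{Lie} and Lemma \ref{xyz0}, and the corollary merely records the case in which (\ref{ppp}) is met in the most direct manner, with $[[r,r]]^{\alpha}$ itself vanishing rather than only lying in the kernel of $\alpha^{\otimes3}\circ ad_{x}$.
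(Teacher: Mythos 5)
Your argument is correct and is exactly what the paper intends: the paper states this corollary as an immediate consequence of Theorem \ref{Lie}, with $[[r,r]]^{\alpha}=0$ trivially forcing condition (\ref{ppp}) and simultaneously supplying the CHYBE requirement in the definition of a quasi-triangular Hom-Lie superbialgebra. Your write-up simply makes these two observations explicit, so there is nothing to add or correct.
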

\begin{thm} Let $(\mathfrak{L}, [\cdot ,\cdot ], \Delta, \alpha, r)$ be a coboundary Hom-Lie superbialgebra. Then the following statements are equivalent, \\
(1) $\mathfrak{L}$ is a quasi-triangular Hom-Lie superbialgebra, i.e., $[[r,r]]^{\alpha}=0$ (\ref{1000}).\\
(2) The equality $(\alpha\otimes\Delta)(r)=-[r_{12}, r_{13}]$
holds, where the bracket is defined in (\ref{cvc}).\\
(3) The equality $(\Delta\otimes\alpha)(r)=[r_{13}, r_{23}]$
holds, where the bracket is defined in (\ref{cvc}).
\end{thm}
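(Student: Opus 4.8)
The plan is to prove the cycle of equivalences by expanding the two tensors $(\alpha\otimes\Delta)(r)$ and $(\Delta\otimes\alpha)(r)$ in Sweedler notation, writing $r=\sum r_{1}\otimes r_{2}$ and introducing a second identical copy $r'=\sum r'_{1}\otimes r'_{2}$, and then matching the results termwise against the three brackets $[r_{12},r_{13}]$, $[r_{12},r_{23}]$, $[r_{13},r_{23}]$ from (\ref{cvc}), whose sum is $[[r,r]]^{\alpha}$ by (\ref{1000}). Throughout I would use the coboundary formula $\Delta(x)=ad_{x}(r)$ from (\ref{888}), the skew-supersymmetry $r_{21}=-r$, the skew-supersymmetry of the bracket (\ref{701}), and the parity constraint $|r_{1}|=|r_{2}|$ (equivalently $|r|=\bar{0}$, which holds by (\ref{777}) and (\ref{2007})).

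For the implication chain through (2), I would compute $(\alpha\otimes\Delta)(r)=\sum\alpha(r_{1})\otimes\Delta(r_{2})$, substitute $\Delta(r_{2})=ad_{r_{2}}(r')=\sum[r_{2},r'_{1}]\otimes\alpha(r'_{2})+(-1)^{|r_{2}||r'_{1}|}\alpha(r'_{1})\otimes[r_{2},r'_{2}]$, and observe that the first resulting summand is literally $[r_{12},r_{23}]$ and the second is literally $[r_{13},r_{23}]$ as defined in (\ref{cvc}). Hence $(\alpha\otimes\Delta)(r)=[r_{12},r_{23}]+[r_{13},r_{23}]$, so the equality $(\alpha\otimes\Delta)(r)=-[r_{12},r_{13}]$ holds if and only if $[r_{12},r_{13}]+[r_{12},r_{23}]+[r_{13},r_{23}]=[[r,r]]^{\alpha}=0$. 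This establishes (1)$\Leftrightarrow$(2) and needs no index relabeling; it is a direct reading of the definitions.

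For the implication chain through (3), expanding $(\Delta\otimes\alpha)(r)=\sum\Delta(r_{1})\otimes\alpha(r_{2})$ in the same way produces two summands in which the bracket occupies the first and the second tensor slot rather than the third, so before comparing with (\ref{cvc}) I would relabel the two identical copies $r\leftrightarrow r'$ and then apply skew-supersymmetry (\ref{701}) to the bracket. After this the first summand becomes $-[r_{12},r_{13}]$ and the second becomes $-[r_{12},r_{23}]$, so $(\Delta\otimes\alpha)(r)=-[r_{12},r_{13}]-[r_{12},r_{23}]$. Consequently $(\Delta\otimes\alpha)(r)=[r_{13},r_{23}]$ if and only if $[r_{12},r_{13}]+[r_{12},r_{23}]+[r_{13},r_{23}]=[[r,r]]^{\alpha}=0$, which gives (1)$\Leftrightarrow$(3). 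Together with the previous step, all three statements are equivalent.

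The only delicate point is the sign bookkeeping in the last computation: relabeling followed by skew-supersymmetry generates products of signs such as $(-1)^{|r'_{1}||r_{1}|}(-1)^{|r'_{1}||r_{2}|}$, and these collapse to $1$ precisely because $|r_{1}|+|r_{2}|=\bar{0}$; it is essential to keep track of these signs termwise and not to prematurely simplify $(-1)^{|r_{1}||r_{2}|}$, which can equal $-1$ when $|r_{1}|=|r_{2}|=\bar{1}$. Everything else---the matching against (\ref{cvc}) and the final rearrangement of the CHYBE---is routine.
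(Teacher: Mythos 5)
Your proposal is correct and follows essentially the same route as the paper: expand $(\alpha\otimes\Delta)(r)$ and $(\Delta\otimes\alpha)(r)$ using $\Delta=ad(r)$ with a second copy $r'$, match the resulting terms against the brackets of (\ref{cvc}) (for the second expansion after relabeling $r\leftrightarrow r'$, applying (\ref{701}), and collapsing the signs via $|r_{1}|+|r_{2}|=\bar{0}$), and read off both equivalences with $[[r,r]]^{\alpha}=0$. The sign point you flag at the end is precisely the step the paper's own computation also relies on.
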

\begin{proof} The equivalence between three statements clearly follows from the equalities.
Let $r'=\sum r'_{1}\otimes r'_{2}$ be another copy of $r$. Since $\Delta=ad(r)$ (\ref{888}), $r=r'$ and $r_{1}\otimes r_{2}=-(-1)^{{|r_{1}|}{|r_{2}|}}r_{2}\otimes r_{1}$.
By calculation we will find results
\begin{align*}
(\alpha\otimes\Delta)(r_{1}\otimes r_{2})&=\alpha(r_{1})\otimes\Delta(r_{2})
=\alpha(r_{1})\otimes[r_{2}, r'_{1}]\otimes\alpha(r'_{2})+(-1)^{{|r'_{1}|}{|r_{2}|}}\alpha(r_{1})\otimes\alpha(r'_{1})\otimes[r_{2}, r'_{2}]\\
&=[r_{12}, r_{23}]+[r_{13}, r_{23}].
\end{align*}
This shows the equivalence between statements (1) and (2). Likewise, we have
\begin{align*}
(\Delta\otimes\alpha)(r_{1}\otimes r_{2})
&=\Delta(r_{1})\otimes\alpha(r_{2})
=[r_{1},r'_{1}]\otimes\alpha(r'_{2})\otimes\alpha(r_{2})+(-1)^{{|r_{1}|}{|r'_{1}|}}\alpha(r'_{1})\otimes[r_{1}, r'_{2}]\otimes\alpha(r_{2})\\
&=-(-1)^{{|r'_{1}|}{|r_{2}|}}[r_{1}, r'_{1}]\otimes\alpha(r_{2})\otimes\alpha(r'_{2})-\alpha(r_{1})\otimes[r_{2}, r'_{1}]\otimes\alpha(r'_{2})\\
&=-[r_{12}, r_{13}]-[r_{12}, r_{23}].
\end{align*}
This shows the equivalence between statements (1) and (3).
\end{proof}

\section{Cobracket perturbation  in Hom-Lie superbialgebras }
The purpose of this section is to study perturbation of cobrackets in Hom-Lie superbialgebras, following Drinfel'd's perturbation theory of quasi-Hopf algebras (\cite{Drinfel'd V.G.1}, \cite{Drinfel'd V.G.5}, \cite{Drinfel'd V.G.6}, \cite{Drinfel'd V.G.7}).\\
We address the following question :
" If $(\mathfrak{L}, [\cdot ,\cdot ], \Delta, \alpha)$ is a Hom-Lie superbialgebra (Definition \ref{ae}) and $t\in\mathfrak{L}^{\otimes2}$, under what conditions does the perturbed cobracket $\Delta_{t}=\Delta+ad(t)$
give another Hom-Lie superbialgebra $(\mathfrak{L}, [\cdot ,\cdot ],\Delta_{t},\alpha)$ ?"\\
\\
Define the perturbed cobracket $\Delta_{t}=\Delta+ad(t)$. For $x\in\mathfrak{L}$ and $t=\sum t_{1}\otimes t_{2}\in\mathfrak{L}^{\otimes2}$ and also recall the adjoint map $ad_{x}:\mathfrak{L}^{\otimes n}\rightarrow \mathfrak{L}^{\otimes n}$ (\ref{action}) we have :
\begin{align*}
\Delta_{t}(x)&=\Delta(x)+ad_{x}(t)
=\Delta(x)+ad_{x}(t_{1}\otimes t_{2})
=\Delta(x)+[x, t_{1}]\otimes\alpha(t_{2})+(-1)^{{|x|}{|t_{1}|}}\alpha(t_{1})\otimes[x, t_{2}].
\end{align*}
This is a natural question because $\Delta$ is a 1-cocycle (Remark \ref{2000}), $ad(t)$ (\ref{action}) is a 1-coboundary when $\alpha^{\otimes2}(t)=t$ (Remark \ref{masr}), and perturbation of cocycles by coboundaries is a natural concept in homological algebra.
Of course, we have more to worry about than just the cocycle condition (\ref{a}) because $(\mathfrak{L}, \Delta_{t},\alpha)$ must be a Hom-Lie supercoalgebra (Definition \ref{00001}).\\
\\
In the following result, we give some sufficient conditions under which the perturbed cobracket $\Delta_{t}$ gives another Hom-Lie superbialgebra. This is a generalization of \cite{Majid}, which deals with cobracket perturbation in Lie superbialgebras.\\
A result about cobracket perturbation in a quasi-triangular Hom-Lie superbialgebra, is given after the following result. We also briefly discuss triangular Hom-Lie superbialgebra, which is the Hom-Type version of Drinfel'd's triangular Lie bialgebra \cite{Drinfel'd V.G.4}.\\
Let us recall some notations first.
For $t=\sum t_{1}\otimes t_{2}\in\mathfrak{L}^{\otimes2}$, the symbol $t_{21}$ denotes $\tau(t)=\sum (-1)^{{|t_{1}|}{|t_{2}|}} t_{2}\otimes t_{1}$. If $\varphi(x, y)$ is an expression in the elements $x$ and $y$, we set
$$|\varphi(x, y)|=\varphi(x, y)-(-1)^{{|x|}{|y|}}\varphi(y, x).$$
For example, the compatibility condition
$\Delta([x,y])=ad_{\alpha(x)}(\Delta(y))-(-1)^{{|x|}{|y|}}ad_{\alpha(y)}(\Delta(x))$ (\ref{a})\\ is equivalent to $$\Delta([x,y])=|ad_{\alpha(x)}(\Delta(y))|.$$
Moreover $[[x, y], \alpha(z)]=|[\alpha(x), [y, z]]|$, where $|[\alpha(x), [y, z]]|=[\alpha(x), [y, z]]-(-1)^{{|x|}{|y|}}[\alpha(y), [x, z]]$.\\
By calculation the Hom-super-Jacobi identity (\ref{702}) is equivalent to $[[x, y], \alpha(z)]=|[\alpha(x), [y, z]]|$.\\
Note that we have $$|\varphi(x, y)+\psi(x, y)|=|\varphi(x, y)|+|\psi(x, y)|.$$
It is also noted to simplify writing $(1\otimes1\otimes1+\xi+\xi^2)=\circlearrowleft$.
\begin{thm}\label{monsieur} Let $(\mathfrak{L}, [\cdot ,\cdot ], \Delta, \alpha)$ be a multiplicative Hom-Lie superbialgebra and $t\in\mathfrak{L}^{\otimes2}$ be an element such that $\alpha^{\otimes2}(t)=t, \ \ t_{21}=-t,$\  (\ref{2007}) and (\ref{777}), ((\ref{777})\ and\ (\ref{2007}),\  i.e., $|t|=\bar{0})$
and
\begin{equation}\label{faxx}
\alpha^{\otimes3}(ad_{x}([[t, t]]^{\alpha}+\circlearrowleft(\alpha\otimes\Delta)(t))=0
\end{equation}
for all $x\in\mathfrak{L}$.
Then $\mathfrak{L}_{t}=(\mathfrak{L}, [\cdot ,\cdot ], \Delta_{t}=\Delta+ad(t),  \alpha)$ is multiplicative Hom-Lie superbialgebra.
\end{thm}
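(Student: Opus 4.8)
The plan is to verify, one at a time, the three defining properties of a multiplicative Hom-Lie superbialgebra (Definition \ref{ae}) for $\mathfrak{L}_{t}$, each time writing $\Delta_{t}=\Delta+ad(t)$ and reducing to facts already available. Since the bracket and $\alpha$ are untouched, $(\mathfrak{L}, [\cdot ,\cdot ], \alpha)$ remains a multiplicative Hom-Lie superalgebra and there is nothing to check for item (1). For $(\mathfrak{L}, \Delta_{t}, \alpha)$ to be a co-multiplicative Hom-Lie supercoalgebra, note that the grading condition (\ref{2007}), skew-supersymmetry, and commutation with $\alpha$ all hold for $ad(t)$ by the very computations carried out in the proof of Theorem \ref{Lie}: $ad(t)$ respects the grading because $|t|=\bar{0}$, one has $ad_{x}(t)+\tau(ad_{x}(t))=ad_{x}(t+t_{21})=0$ because $t_{21}=-t$, and $ad(t)$ commutes with $\alpha$ because $\alpha^{\otimes2}(t)=t$ and $\alpha$ is multiplicative; adding these to the corresponding properties of $\Delta$ gives them for $\Delta_{t}$. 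The compatibility condition (\ref{a}) splits the same way: $\Delta_{t}([x,y])=\Delta([x,y])+ad_{[x,y]}(t)$, where the first summand is governed by (\ref{a}) for $\Delta$ and the second by Lemma \ref{compatiblite} applied to $t$; collecting terms yields $\Delta_{t}([x,y])=ad_{\alpha(x)}(\Delta_{t}(y))-(-1)^{{|x|}{|y|}}ad_{\alpha(y)}(\Delta_{t}(x))$.

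Hence the only substantial point, and the one I expect to be the main obstacle, is the Hom-super-coJacobi identity $\circlearrowleft\circ(\alpha\otimes\Delta_{t})\circ\Delta_{t}=0$. Expanding bilinearly,
$$(\alpha\otimes\Delta_{t})\circ\Delta_{t}=(\alpha\otimes\Delta)\circ\Delta+(\alpha\otimes\Delta)\circ ad(t)+(\alpha\otimes ad(t))\circ\Delta+(\alpha\otimes ad(t))\circ ad(t),$$
I would apply $\circlearrowleft$ and evaluate at a fixed $x\in\mathfrak{L}$ term by term. The first term contributes $0$ since $\mathfrak{L}$ is a Hom-Lie superbialgebra (that is the coJacobi identity (\ref{jacobi}) for $\Delta$). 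The last term is precisely the situation of Theorem \ref{Lie} with $r$ replaced by $t$ and the cobracket there by $ad(t)$, so by the computation in the proof of Lemma \ref{xyz0} (equalities (\ref{anis3}) and (\ref{Hom})) it equals $\alpha^{\otimes3}(ad_{x}([[t,t]]^{\alpha}))$.

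It remains to treat the two mixed terms, which is where the real work lies. The claim, which I would isolate as a lemma, is
$$\circlearrowleft\big[(\alpha\otimes\Delta)(ad_{x}(t))+(\alpha\otimes ad(t))(\Delta(x))\big]=\alpha^{\otimes3}\big(ad_{x}(\circlearrowleft(\alpha\otimes\Delta)(t))\big);$$
in words, after cyclic symmetrization a Leibniz rule holds: letting $ad_{x}$ act slot-by-slot on the element $(\alpha\otimes\Delta)(t)\in\mathfrak{L}^{\otimes3}$ gives the same result as moving $ad_{x}$ inside $\Delta$ and inside the second $ad(t)$. To prove it I would write $t=\sum t_{1}\otimes t_{2}$, $\Delta(t_{2})=\sum (t_{2})_{1}\otimes (t_{2})_{2}$ and $\Delta(x)=\sum x_{1}\otimes x_{2}$, expand both sides, and match the monomials using the compatibility condition (\ref{a}) of $\Delta$ with the bracket (to commute $\Delta$ past a bracket $[x,-]$), the Hom-super-Jacobi identity (\ref{702}), the multiplicativity of $\alpha$ and $\alpha^{\otimes2}(t)=t$; keeping track of the super-signs produced by $\tau$, $\xi$ and the notation $|\varphi(x,y)|=\varphi(x,y)-(-1)^{{|x|}{|y|}}\varphi(y,x)$ is the delicate part, to be handled exactly as in the proofs of Lemma \ref{xyz0} and Theorem \ref{bnbn10}. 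Granting this identity, summing the three contributions gives
$$\circlearrowleft\circ(\alpha\otimes\Delta_{t})\circ\Delta_{t}(x)=\alpha^{\otimes3}\big(ad_{x}([[t,t]]^{\alpha}+\circlearrowleft(\alpha\otimes\Delta)(t))\big)=0$$
by hypothesis (\ref{faxx}). Finally, multiplicativity of $\mathfrak{L}_{t}$ is automatic: the bracket and $\alpha$ are unchanged and $\Delta_{t}$ commutes with $\alpha$ by the first paragraph, so $\mathfrak{L}_{t}$ is a multiplicative Hom-Lie superbialgebra.
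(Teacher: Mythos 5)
Your proposal follows essentially the same route as the paper: the algebra structure is untouched, the coalgebra axioms and the cocycle condition for $\Delta_{t}$ split linearly into the $\Delta$-part and the $ad(t)$-part (the latter handled exactly as in Theorem \ref{Lie} and Lemma \ref{compatiblite}), and the co-Jacobi identity is reduced to the four-term expansion whose mixed terms are isolated as a separate identity — this is precisely the paper's Lemma \ref{edin}, proved there using the compatibility condition \eqref{a}, skew-supersymmetry of $\Delta$, $t_{21}=-t$ and $\alpha^{\otimes2}(t)=t$. The only cosmetic difference is that you place $\circlearrowleft$ inside $ad_{x}$ on the right-hand side of the key lemma while the paper places it outside; these agree because $ad_{x}$ commutes with the cyclic operator $\xi$ on $\mathfrak{L}^{\otimes3}$.
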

\begin{proof} To show that $\mathfrak{L}_{t}$ is a multiplicative Hom-Lie superbialgebra, we need to prove the following conditions:\\
It is clear that $(\mathfrak{L}, [\cdot ,\cdot ], \alpha)$ is a multiplicative Hom-Lie superalgebra.\\
It remains to show that $(\mathfrak{L}, \Delta_{t}, \alpha)$ co-multiplicative Hom-Lie supercoalgebra, and the compatibility condition (\ref{a}) holds for $\Delta_{t}$ and $[\cdot ,\cdot ]$.\\
Precisely we need to prove four things:\\
$\bullet$ $\alpha^{\otimes2}\circ\Delta_{t}=\Delta_{t}\circ\alpha$,  equality is true because:
\begin{align*}
\Delta_{t}(\alpha(x))&=\Delta(\alpha(x))+ad_{\alpha(x)}(t).\\
\alpha^{\otimes2}(\Delta_{t}(x))&=\alpha^{\otimes2}(\Delta(x))+\alpha^{\otimes2}(ad_{x}(t))
=\Delta(\alpha(x))+\alpha^{\otimes2}(ad_{x}(t_{1}\otimes t_{2})).
\end{align*}
Using $\alpha^{\otimes2}(t)=t$, we have
\begin{align*}
\alpha^{\otimes2}(ad_{x}(t_{1}\otimes t_{2}))&=[\alpha(x), t_{1}]\otimes\alpha(t_{2})+(-1)^{{|x|}{|t_{1}|}}\alpha(t_{1})\otimes[\alpha(x), t_{2}]
=ad_{\alpha(x)}(t).
\end{align*}
$\bullet$ $\Delta_{t}$ is skew-supersymmetric because:
\begin{align*}
\Delta_{t}(x)&=\Delta(x)+ad_{x}(t),\\
\tau(\Delta_{t}(x))&=-\Delta(x)+(-1)^{{|t_{1}|}{|t_{2}|}}\textbf{(}[x,t_{2}]\otimes\alpha(t_{1})+(-1)^{{|x|}{|t_{2}|}}\alpha(t_{2})\otimes[x,t_{1}]\textbf{)}.
\end{align*}
Then : $\Delta_{t}(x)+\tau(\Delta_{t}(x))=ad_{x}\textbf{(}t+(-1)^{{|t_{1}|}{|t_{2}|}}t_{2}\otimes t_{1}\textbf{)}=ad_{x}(t+t_{21})=ad_{x}(0)=0.$\\
$\bullet$ Now, we need to show the compatibility condition (\ref{a}) holds for $\Delta_{t}$ and $[\cdot ,\cdot ]$:
\begin{equation}\label{a2016}
\Delta_{t}([x,y])=ad_{\alpha(x)}(\Delta_{t}(y))-(-1)^{{|x|}{|y|}}ad_{\alpha(y)}(\Delta_{t}(x)),
\end{equation}
which  is equivalent to
\begin{equation}\label{add1}
\Delta_{t}([x,y])=|ad_{\alpha(x)}(\Delta_{t}(y))|.
\end{equation}
Since $\Delta_{t}=\Delta+ad(t)$,  
(\ref{add1}) is equivalent to
\begin{align*}
\Delta([x, y])+ad_{[x, y]}(t)&=|ad_{\alpha(x)}(\Delta(y))+ad_{\alpha(x)}(ad_{y}(t))|
=|ad_{\alpha(x)}(\Delta(y))|+|ad_{\alpha(x)}(ad_{y}(t))|.
\end{align*}
Moreover, since $\Delta([x,y])=ad_{\alpha(x)}(\Delta(y))-(-1)^{{|x|}{|y|}}ad_{\alpha(y)}(\Delta(x))=
|ad_{\alpha(x)}(\Delta(y))|$, because $\mathfrak{L}$ is a Hom-Lie superbialgebra, (\ref{add1}) is equivalent to,
$
ad_{[x, y]}(t)=|ad_{\alpha(x)}(ad_{y}(t))|,
$
which holds by Lemma \ref{compatiblite}.\\
$\bullet$ Finally, we must show the Hom-super-coJacobi identity of $\Delta_{t}$, which states
\begin{equation}\label{shss}
\circlearrowleft(\alpha\otimes\Delta_{t})(\Delta_{t}(x))=0
\end{equation}
for all $x\in\mathfrak{L}$. Using the definition $\Delta_{t}=\Delta+ad(t)$. We can rewrite (\ref{shss}) as
\begin{equation}\label{147a}
\circlearrowleft(\alpha\otimes\Delta)(\Delta(x))+\circlearrowleft(\alpha\otimes\Delta)(ad_{x}(t))+\\
\circlearrowleft(\alpha\otimes ad(t))(\Delta(x))+\circlearrowleft(\alpha\otimes ad(t))(ad_{x}(t))=0.
\end{equation}
We already know that $\circlearrowleft(\alpha\otimes\Delta)(\Delta(x))=0$,  which is the Hom-super-coJacobi identity of $\Delta$.\\
Moreover, in (\ref{anis3}) and (\ref{Hom}) (in the proof of Theorem \ref{Lie} with $t$ instead of $r$), we already showed that
\begin{equation}\label{147b}
\circlearrowleft(\alpha\otimes ad(t))(ad_{x}(t))=\alpha^{\otimes3}(ad_{x}([[t, t]]^{\alpha})).
\end{equation}
In view of (\ref{147a}) and (\ref{147b}), the Hom-super-coJacobi identity of $\Delta_{t}$ (\ref{shss}) is equivalent to
\begin{equation}\label{mosq}
\alpha^{\otimes3}(ad_{x}([[t, t]]^{\alpha}))+\circlearrowleft\textbf{(}(\alpha\otimes ad(t))(\Delta(x))+(\alpha\otimes\Delta)(ad_{x}(t))\textbf{)}=0.
\end{equation}
Using the assumption (\ref{faxx}), the condition (\ref{mosq}) is equivalent to
\begin{equation}\label{m10}
\circlearrowleft\textbf{(}(\alpha\otimes ad(t))(\Delta(x))+(\alpha\otimes\Delta)(ad_{x}(t))\textbf{)}=\alpha^{\otimes3}(\circlearrowleft ad_{x}((\alpha\otimes\Delta)(t))).
\end{equation}
We will prove (\ref{m10}) in Lemma \ref{edin} below.\\
The proof of Theorem \ref{monsieur} will be complete once we prove Lemma \ref{edin}.
\end{proof}
\begin{lem}\label{edin}
The condition (\ref{m10}) holds.
\end{lem}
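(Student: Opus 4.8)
The plan is to unwrap both sides of \eqref{m10} into explicit tensor expressions, using the definitions of $\Delta$, $ad(t)$, and the cyclic operator $\circlearrowleft=(1\otimes1\otimes1+\xi+\xi^2)$, and then match terms. First I would fix $x\in\mathfrak{L}$, write $t=\sum t_1\otimes t_2$ and let $t'=\sum t'_1\otimes t'_2$ be a second copy of $t$; since $\alpha^{\otimes2}(t)=t$ and $\Delta$ commutes with $\alpha$ (this is used throughout), all the $\alpha$'s can be shuffled freely. Expanding $ad_x(t)=[x,t_1]\otimes\alpha(t_2)+(-1)^{|x||t_1|}\alpha(t_1)\otimes[x,t_2]$ and then applying $\alpha\otimes\Delta$ to it, and separately expanding $(\alpha\otimes ad(t))(\Delta(x))$ using $\Delta(x)=[x,t_1]\otimes\alpha(t_2)+(-1)^{|x||t_1|}\alpha(t_1)\otimes[x,t_2]$ (note $\Delta=\Delta$ here is the \emph{given} cobracket of $\mathfrak{L}$, not $ad(t)$ — so care is needed: in the general $\Delta$ case one keeps $\Delta(x)=\sum x_1\otimes x_2$ in Sweedler notation rather than the $r$-matrix form). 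This produces a sum of triple tensors; applying $\circlearrowleft$ to each and collecting gives the left-hand side of \eqref{m10}.

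Next I would expand the right-hand side $\alpha^{\otimes3}\bigl(\circlearrowleft\, ad_x((\alpha\otimes\Delta)(t))\bigr)$. Here $(\alpha\otimes\Delta)(t)=\sum\alpha(t_1)\otimes\Delta(t_2)$; write $\Delta(t_2)=\sum (t_2)_1\otimes(t_2)_2$ (or in cocycle/Sweedler form), so $(\alpha\otimes\Delta)(t)$ is a triple tensor, and $ad_x$ of a triple tensor is given by \eqref{action} with $n=3$. Applying $\circlearrowleft$ and then $\alpha^{\otimes3}$ and using $\alpha^{\otimes2}(t)=t$ to absorb the outer $\alpha$'s yields a second sum of triple tensors. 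The heart of the argument is then a term-by-term comparison: the terms where $[x,-]$ lands on a "leg" coming from $t$ (rather than from the $\Delta$ applied to $t$) should match the $(\alpha\otimes\Delta)(ad_x(t))$ contribution on the left, while the terms where $[x,-]$ lands on a leg of $\Delta(t_2)$ should, after using that $\Delta$ is a $1$-cocycle (the compatibility condition \eqref{a} for $\mathfrak{L}$, i.e. $\Delta([x,y])=|ad_{\alpha(x)}(\Delta(y))|$) together with co-multiplicativity $\Delta\alpha=\alpha^{\otimes2}\Delta$, reorganize into the $(\alpha\otimes ad(t))(\Delta(x))$ contribution. Effectively this is the statement that "$ad_x$ is a derivation compatible with $\Delta$", packaged cyclically.

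The main obstacle I expect is bookkeeping of the Koszul signs: every transposition of homogeneous factors, every application of $\xi$ (which carries $(-1)^{|x|(|y|+|z|)}$), and every move of $ad_x$ past a leg produces a sign, and these must be shown to agree on both sides. To keep this manageable I would, as the authors do elsewhere (cf. the proof of Lemma \ref{xyz0}), exploit $|t|=\bar0$, i.e. $|t_1|=|t_2|$, which collapses many sign factors, and reduce to verifying the identity on one orbit under $\circlearrowleft$ (say the "$A$-type" terms), the other two orbits following by the cyclic symmetry that is built into both sides. A secondary subtlety is that, unlike in Theorem \ref{Lie} where $\Delta=ad(r)$ is itself of $r$-matrix form, here $\Delta$ is an arbitrary cobracket, so one must use only its structural properties (cocycle condition and co-multiplicativity) and never an explicit formula; I would be careful to invoke \eqref{a} for $\mathfrak{L}$ at exactly the point where a bracket $[x,-]$ meets a $\Delta$-leg, and verify that the resulting regrouping is exactly $\circlearrowleft(\alpha\otimes ad(t))(\Delta(x))$. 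Once the $A$-orbit identity is checked, the conclusion is immediate and the proof of Theorem \ref{monsieur} is complete.
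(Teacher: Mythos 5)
Your plan follows essentially the same route as the paper's proof: expand both sides in Sweedler notation (keeping $\Delta(x)=\sum x_1\otimes x_2$ abstract, as you rightly insist), invoke the cocycle condition \eqref{a} precisely at the point where $[x,-]$ meets $\Delta(t_2)$, and use $t_{21}=-t$, $|t|=\bar 0$, $\alpha^{\otimes2}(t)=t$ and multiplicativity to cancel the resulting $ad_{\alpha(t_2)}(\Delta(x))$ terms against $\circlearrowleft(\alpha\otimes ad(t))(\Delta(x))$ and reassemble the rest into $\alpha^{\otimes3}(\circlearrowleft ad_x((\alpha\otimes\Delta)(t)))$. The only caveat is that your outline leaves the Koszul-sign verification unexecuted, but the strategy and the key cancellations are exactly those of the paper.
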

\begin{proof} Write $\Delta(x)=\sum x_{1}\otimes x_{2}$ and $t=\sum t_{1}\otimes t_{2}\in\mathfrak{L}^{\otimes2}$. Then the left-hand side of (\ref{m10}) is:
\begin{align*}
&\circlearrowleft\textbf{(}(\alpha\otimes ad(t))(\Delta(x))+(\alpha\otimes\Delta)(ad_{x}(t))\textbf{)}\\
&= \ \circlearrowleft\textbf{(}\alpha(x_{1})\otimes ad_{x_{2}}(t_{1}\otimes t_{2})+(\alpha\otimes\Delta)([x, t_{1}]\otimes\alpha(t_{2})+(-1)^{{|x|}{|t_{1}|}}\alpha(t_{1})\otimes[x, t_{2}])\textbf{)}\\
&= \ \circlearrowleft\textbf{(}\alpha(x_{1})\otimes[x_{2}, t_{1}]\otimes\alpha(t_{2})+(-1)^{{|x_{2}|}{|t_{1}|}}\alpha(x_{1})\otimes\alpha(t_{1})\otimes
[x_{2}, t_{2}]\textbf{)}\\
&+\circlearrowleft\textbf{(}\alpha([x, t_{1}])\otimes\Delta(\alpha(t_{2}))+(-1)^{{|x|}{|t_{1}|}}\alpha^{2}(t_{1})\otimes\Delta([x, t_{2}])\textbf{)}.
\end{align*}
Write $\Delta(t_{2})=\sum t'_{2}\otimes t''_{2}$. Recall from (\ref{a}) that:
$\Delta([x, t_{2}])=ad_{\alpha(x)}(\Delta(t_{2}))-(-1)^{{|x|}{|t_{2}|}}ad_{\alpha(t_{2})}(\Delta(x))$, because $\mathfrak{L}$ is a Hom-Lie superbialgebra. We can continue the above computation as follows :\\
\begin{align*}
&= \ \circlearrowleft\textbf{(}\alpha(x_{1})\otimes[x_{2}, t_{1}]\otimes\alpha(t_{2})+(-1)^{{|x_{2}|}{|t_{1}|}}\alpha(x_{1})\otimes\alpha(t_{1})\otimes
[x_{2}, t_{2}]+\alpha([x, t_{1}])\otimes\alpha^{\otimes2}(\Delta(t_{2}))\textbf{)}\\
&+\circlearrowleft\textbf{(}(-1)^{{|x|}{|t_{1}|}}\alpha^{2}(t_{1})\otimes[\alpha(x), t'_{2}]\otimes\alpha(t''_{2})+(-1)^{{|x|}{|t_{1}|}}(-1)^{{|x|}{|t'_{2}|}}\alpha^{2}(t_{1})\otimes\alpha(t'_{2})\otimes
[\alpha(x), t''_{2}]\textbf{)}\\
&-\circlearrowleft\textbf{(}(-1)^{{|x|}{|t_{1}|}}(-1)^{{|x|}{|t_{2}|}}\alpha^{2}(t_{1})\otimes[\alpha(t_{2}),x_{1}]\otimes\alpha(x_{2})\\
&-(-1)^{{|x|}{|t_{1}|}}(-1)^{{|x|}{|t_{2}|}}(-1)^{{|x_{1}|}{|t_{2}|}}\alpha^{2}(t_{1})\otimes\alpha(x_{1})\otimes[\alpha(t_{2}), x_{2}]\textbf{)}.
\end{align*}
It follows the skew-supersymmetry of $\Delta$ applied to $x$ (i.e, $\sum x_{1}\otimes x_{2}=-\sum (-1)^{{|x_{1}|}{|x_{2}|}} x_{2}\otimes x_{1}$), $|t|=\bar{0}$, $t_{21}=-t$, and $\alpha^{\otimes2}(t)=t$, We find that \\
$\bullet$ $\alpha^{2}(t_{1})\otimes[\alpha(t_{2}),x_{1}]\otimes\alpha(x_{2})
=(-1)^{{|x_{1}|}{|x_{2}|}}\alpha(x_{2})\otimes\alpha^{2}(t_{1})\otimes[\alpha(t_{2}), x_{1}].$
\\
$\bullet$ $(-1)^{{|x_{1}|}{|t_{2}|}}\alpha^{2}(t_{1})\otimes\alpha(x_{1})\otimes[\alpha(t_{2}), x_{2}]=(-1)^{{|t_{1}|}{|t_{2}|}}(-1)^{{|x_{2}|}{|t_{1}|}}\alpha(x_{1})\otimes[\alpha(t_{2}), x_{2}]\otimes\alpha^{2}(t_{1}).$
That the first two terms and the last two terms above cancel out. Using the commutation of $\alpha$ with $[\cdot ,\cdot ]$ and $\Delta$ and $\alpha^{\otimes2}(t)=t$, the above computation continues as follows:
\begin{align*}
&= \ \circlearrowleft\textbf{(}\alpha([x, t_{1}])\otimes\alpha^{\otimes2}(\Delta(t_{2}))+(-1)^{{|x|}{|t_{1}|}}\alpha^{2}(t_{1})\otimes[\alpha(x), t'_{2}]\otimes\alpha(t''_{2})\\
&+(-1)^{{|x|}({|t_{1}|+|t'_{2}|})}\alpha^{2}(t_{1})\otimes\alpha(t'_{2})\otimes
[\alpha(x), t''_{2}]\textbf{)}\\
&= \ \circlearrowleft\textbf{(}\alpha([x, \alpha(t_{1})])\otimes\alpha^{\otimes2}(\Delta(\alpha(t_{2})))+(-1)^{{|x|}{|t_{1}|}}\alpha^{3}(t_{1})\otimes[\alpha(x), \alpha(t'_{2})]\otimes\alpha^{2}(t''_{2})\\
&+(-1)^{{|x|}({|t_{1}|+|t'_{2}|})}\alpha^{3}(t_{1})\otimes\alpha^{2}(t'_{2})\otimes
[\alpha(x), \alpha(t''_{2})]\textbf{)}\\
&=\alpha^{\otimes3}\textbf{(} \circlearrowleft\textbf{(}[x, \alpha(t_{1})]\otimes\Delta(\alpha(t_{2}))+(-1)^{{|x|}{|t_{1}|}}\alpha^{2}(t_{1})\otimes[x, t'_{2}]\otimes\alpha(t''_{2})
+(-1)^{{|x|}({|t_{1}|+|t'_{2}|})}\alpha^{2}(t_{1})\otimes\alpha(t'_{2})\otimes
[x, t''_{2}]\textbf{)}\textbf{)}\\
&=\alpha^{\otimes3}(\circlearrowleft ad_{x}(\alpha(t_{1})\otimes t'_{2}\otimes t''_{2}))
=\alpha^{\otimes3}(\circlearrowleft ad_{x}((\alpha\otimes\Delta)(t))).
\end{align*}
This proves (\ref{m10}).
\end{proof}
The following result is a special case of the previous theorem.
\begin{cor}\ Let $(\mathfrak{L}, [\cdot ,\cdot ], \Delta, \alpha)$ be a multiplicative Hom-Lie superbialgebra and $t\in\mathfrak{L}^{\otimes2}$ be an element such that $\alpha^{\otimes2}(t)=t, \ \ t_{21}=-t,$\  (\ref{2007}) and (\ref{777}), ((\ref{777})\ and\ (\ref{2007}),\  i.e., $|t|=\bar{0})$
and $[[t, t]]^{\alpha}+\circlearrowleft(\alpha\otimes\Delta)(t)=0$
for all $x\in\mathfrak{L}$.
Then $\mathfrak{L}_{t}=(\mathfrak{L}, [\cdot ,\cdot ], \Delta_{t}=\Delta+ad(t),  \alpha)$ is multiplicative Hom-Lie superbialgebra.
\end{cor}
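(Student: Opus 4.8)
The plan is to obtain this corollary as a direct specialization of Theorem \ref{monsieur}, since its hypotheses are strictly stronger than those of that theorem. First I would observe that the standing assumptions imposed on $t$ here — namely $\alpha^{\otimes2}(t)=t$, $t_{21}=-t$, conditions (\ref{2007}) and (\ref{777}), i.e. $|t|=\bar{0}$ — are literally identical to the ones required in Theorem \ref{monsieur}, so no verification is needed beyond noting this.

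The only genuine point is to check that the hypothesis
$$[[t, t]]^{\alpha}+\circlearrowleft(\alpha\otimes\Delta)(t)=0$$
implies condition (\ref{faxx}), that is, $\alpha^{\otimes3}\bigl(ad_{x}([[t, t]]^{\alpha}+\circlearrowleft(\alpha\otimes\Delta)(t))\bigr)=0$ for all $x\in\mathfrak{L}$. This is immediate: the element $[[t, t]]^{\alpha}+\circlearrowleft(\alpha\otimes\Delta)(t)$ is assumed to be the zero element of $\mathfrak{L}^{\otimes3}$, and both $ad_{x}:\mathfrak{L}^{\otimes3}\to\mathfrak{L}^{\otimes3}$ and $\alpha^{\otimes3}:\mathfrak{L}^{\otimes3}\to\mathfrak{L}^{\otimes3}$ are linear maps, so they send $0$ to $0$. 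Hence (\ref{faxx}) holds trivially, and Theorem \ref{monsieur} applies verbatim, giving that $\mathfrak{L}_{t}=(\mathfrak{L}, [\cdot ,\cdot ], \Delta_{t}=\Delta+ad(t), \alpha)$ is a multiplicative Hom-Lie superbialgebra.

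I expect essentially no obstacle here: all the substantive work — co-multiplicativity $\alpha^{\otimes2}\circ\Delta_{t}=\Delta_{t}\circ\alpha$, skew-supersymmetry of $\Delta_{t}$, the compatibility condition (\ref{a}) for $\Delta_{t}$ (via Lemma \ref{compatiblite}), and the Hom-super-coJacobi identity for $\Delta_{t}$ (via Lemma \ref{edin})— has already been carried out in the proof of Theorem \ref{monsieur}. The one thing worth emphasizing in the write-up is that the passage from the global identity in $\mathfrak{L}^{\otimes3}$ to the pointwise condition (\ref{faxx}) is automatic precisely because $ad_{x}$ and $\alpha^{\otimes3}$ are linear; the corollary is therefore purely a restatement of Theorem \ref{monsieur} under a more easily checkable sufficient condition on $t$.
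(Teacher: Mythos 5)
Your proposal is correct and matches the paper's intent exactly: the paper introduces this corollary with the phrase ``a special case of the previous theorem'' and offers no further argument, so the whole content is precisely your observation that $[[t,t]]^{\alpha}+\circlearrowleft(\alpha\otimes\Delta)(t)=0$ forces condition (\ref{faxx}) by linearity of $ad_{x}$ and $\alpha^{\otimes3}$. Nothing further is needed.
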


\begin{small}
\textbf{Addresses}.\\ \emph{M. Fadous} , University of Sfax, Faculty of Sciences Sfax,  BP
1171, 3038 Sfax, Tunisia. \\
\emph{S. Mabrouk,} Universit\'{e} de Gafsa,  Facult\'{e} des Sciences, Gafsa Tunisia\\
\emph{A. Makhlouf,} University of Haute Alsace, 4 rue des fr\`eres Lumi\`ere, 68093 Mulhouse France. \\
\textbf{Emails.} mohamedfadous201001@gmail.com\\
mabrouksami00@yahoo.fr\\
 Abdenacer.Makhlouf@uha.fr
\end{small}


\begin{thebibliography}{99}

\bibitem {Ammar F Makhlouf A 1}Ammar F. and Makhlouf A., \emph{Hom-Lie superalgebras and Hom-Lie admissible superalgebras}, Journal of Algebra, Vol. 324
    (7), (2010) 1513-1528.
\bibitem {Ammar F Makhlouf A 2}Ammar F., Makhlouf A. and Saadaoui N., \emph{Cohomology of Hom-Lie superalgebras and q-deformed Witt superalgebra}, Czechoslovak Mathematical Journal, Vol. 63, No. 3, (2013) 721-761.
\bibitem {Andruskiewitsch N}Andruskiewitsch N., \emph{Lie Superbialgebras and Poisson-Lie Supergroups}, Abh. Math. Sem. Univ. Hamburg 63 (1993), 147-163.
\bibitem {Benayadi Makhlouf}Benayadi S. and Makhlouf A., Hom-Lie algebras with symmetric invariant nonDegenerate bilinear forms, Journal of Geometry and Physics 76 (2014) 38-60.
\bibitem {Sheng2}Cai L. and Sheng Y, \emph{Purely Hom-Lie bialgebras,} arXiv:1605.00722  (2016) to appear in Sci. China Math.
\bibitem {Drinfel'd V.G.1}Drinfel'd V.G., \emph{Constant quasiclassical solution of the Yang-Baxter quantum equation}, Sov. Math. Dokl., 28 (1983), 667-671.
\bibitem {Drinfel'd V.G.2}Drinfel'd V.G., \emph{Hamiltonian structures on Lie groups, Lie bialgebras and the geometric meaning of classical Yang-Baxter equations}, Sov., Dokl. Akad.Nauk SSSR 268  (1983), no. 2, 285-287. English translation: Soviet Math. Dokl. 27 (1983), no.1, 68-71.
\bibitem {Drinfel'd V.G.4} Drinfel'd V. G., \emph{Quantum Groups}, in: Proc. ICM (Berkeley, 1986), p.798-820, AMS, Providence, RI, 1987.
\bibitem {Drinfel'd V.G.5}Drinfel'd V.G., Quasi-Hopf Algebras and Knizhnik-Zamolodchikov Equation, in: A.U. Klimyk, et al ed., Problems of Modern Quantum Field Theory, p.1-13, Springer, Berlin, 1989.
\bibitem {Drinfel'd V.G.6}Drinfel'd V.G., \emph{Quasi-Hopf algebras}, Leningrad Math. J., 1(6) (1990), 1419-1457.
\bibitem {Drinfel'd V.G.7}Drinfel'd V.G., \emph{Structure of the quasitriangular quasi-Hopf algebras}, Functional Anal. Appl., 26:1(1992), 63-65.
\bibitem {HartwigLarssonSilvestrov}Hartwig J., Larsson D.  and Silvestrov S., \emph{Deformations of Lie algebras using $\sigma$-derivations},
J. Algebra 295 (2006), 314-361.
\bibitem {Hengyun Y and Yucai S}Hengyun Y. and Yucai S., \emph{Lie superbialgebra structures on generalized super-Virasoro algebras}. Acta Mathematica Scientia 2010, 30B(1):225-239.
\bibitem {Larsson and S.D. Silvestrov}Larsson D. and Silvestrov S., \emph{Quasi-hom-Lie algebras, central extensions
and 2-cocycle-like identities}, J. Algebra, 288 (2005), 321-344.
\bibitem {Larsson and S.D. Silvestrov 1}Larsson D. and Silvestrov S., \emph{Quasi-Lie algebras}, Contemp. Math., 391
(2005), 241-248.
  \bibitem {Majid}Majid S., \emph{Foundation of Quantum Group Theory}, Cambridge University Press, Cambridge, UK, 1995.
\bibitem {Makhlouf1}Makhlouf A., \emph{Paradigm of nonassociative Hom-algebras and Hom-superalgebras}, Proceedings of Jordan Structures in Algebra and Analysis Meeting, Editorial Circulo Rojo, Almeria : 143-177, 2010.
\bibitem {MakhloufSilvestrov}Makhlouf A. and Silvestrov S., \emph{Hom-algebra structures}, J. Gen. Lie Theory Appl. 2
(2008), 51-64.
\bibitem {MakhloufSilvestrov2}Makhlouf A. and Silvestrov S., \emph{Hom-algebras and Hom-coalgebras}, J. Alg. Appl., 9(4) (2010), 1-37.
\bibitem {MakhloufSilvestrov3}Makhlouf A. and Silvestrov S., \emph{Notes on 1-parameter formal deformations of Hom-associative and Hom-Lie algebras}, Forum Math., 22 (2010), 715-739.
\bibitem {Makhlouf A and D.Yau} Makhlouf A. and Yau D., \emph{Rota-Baxter Hom-Lie-admissible algebras}, Comm.
Algebra, 42(3) (2014), 1231-1257.
\bibitem {Sheng}Sheng Y., \emph{Representation of Hom-Lie algebras}, Algebr. Represent. Theory, 15 (2012), 1081-1098.
\bibitem {Sheng1}Sheng Y, Bai C., \emph{A new approach to Hom-Lie bialgebras}, Journal of Algebra 399 (2014) 232-250.
\bibitem {Walter}Walter M., \emph{Lie Coalgebras}, Advances in Mathematics 38, 1-54  (1980).
\bibitem {Yau D}Yau D., \emph{Enveloping algebras of Hom-Lie algebras}, J. Gen. Lie Theory Appl., 2(2) (2008), 95-108.
\bibitem {Yau1}Yau D., \emph{Hom-algebras and homology}, J. Lie Theory, 19(2) (2009), 409-421.
\bibitem {Yau2}Yau D., \emph{The classial Hom-Yang-Baxter equation and Hom-Lie bialgebras}, International Electronic Journal of Algebra
Volume 17 (2015) 11-45.





\end{thebibliography}
\end{document}